\newcommand{\color}[6]{}
\theoremstyle{plain}
\newtheorem{thm}{Theorem}[subsection]
\newtheorem{conj}[thm]{Conjecture}
\newtheorem{prop}[thm]{Proposition}
\newtheorem{cor}[thm]{Corollary}
\newtheorem{lem}[thm]{Lemma}
\newtheorem{defn}[thm]{Definition}
\theoremstyle{definition}
\newtheorem{rem}{Remark}[subsection]
\newtheorem{examp}[rem]{Example}
\newcommand{\Cp}{\mathbb{C}}
\DeclareMathOperator{\op}{op}
\DeclareMathOperator{\cyc}{cyc}
\DeclareMathOperator{\Ho}{H}
\DeclareMathOperator{\HH}{HH}
\DeclareMathOperator{\HC}{HC}
\DeclareMathOperator{\Ext}{Ext}
\DeclareMathOperator{\RHom}{RHom}
\DeclareMathOperator{\Ch}{C}
\DeclareMathOperator{\const}{\mathbf{const}}
\DeclareMathOperator{\Set}{\mathbf{Set}}
\DeclareMathOperator{\SSet}{\mathbf{SSet}}
\DeclareMathOperator{\Hom}{Hom}
\DeclareMathOperator{\simp}{simp}
\DeclareMathOperator{\Top}{\mathbf{Top}}
\DeclareMathOperator{\Fun}{Fun}
\DeclareMathOperator{\No}{N}
\DeclareMathOperator{\red}{red}
\DeclareMathOperator{\DD}{D}
\DeclareMathOperator{\Ker}{Ker}
\DeclareMathOperator{\cpct}{cpct}
\DeclareMathOperator{\Map}{Map}
\DeclareMathOperator{\Der}{Der}
\DeclareMathOperator{\DDer}{\mathbb{D}er}
\DeclareMathOperator{\DR}{DR}
\DeclareMathOperator{\ncm}{\mu_{nc}}
\DeclareMathOperator{\ncml}{\tilde{\mu}_{nc}}
\DeclareMathOperator{\bp}{\mathbf{bp}}
\DeclareMathOperator{\res}{\mathbf{res}}
\DeclareMathOperator{\Zz}{Z}
\DeclareMathOperator{\Ss}{S}
\date {}
\author{Ben Davison}
\title{Superpotential algebras and manifolds}
\begin{document}
\maketitle
\begin{abstract}
We study a special class of Calabi-Yau algebras (in the sense of Ginzburg): those arising as the fundamental group algebras of acyclic manifolds.  Motivated partly by the usefulness of `superpotential descriptions' in motivic Donaldson-Thomas theory, we investigate the question of whether these algebras admit superpotential presentations.  We establish that the fundamental group algebras of a wide class of acyclic manifolds, including all hyperbolic manifolds, do not admit such descriptions, disproving a conjecture of Ginzburg regarding them.  We also describe a class of manifolds that do admit such descriptions, and discuss a little their motivic Donaldson-Thomas theory.  Finally, some links with topological field theory are described.\end{abstract}
\setcounter{tocdepth}{1}
\tableofcontents
\section{Introduction}
\subsection{Background}
Since the foundational paper \cite{Ginz} it has been understood that if a $d$-dimensional compact orientable manifold has a contractible universal cover, its fundamental group algebra is a `Calabi-Yau algebra' of dimension $d$.  Indeed to topologists this appears to be something of a `folklore result'.  It has also been understood for some time that a large number of the Calabi-Yau algebras that one meets have a special form -- they are a kind of noncommutative symplectic reduction.  In 3 dimensions, algebras of this form are known as Jacobi algebras associated to superpotential algebras -- they play an important role in supersymmetric gauge theory.  Precisely, the superpotential algebra is a kind of smooth differential graded algebra, with the Jacobi algebra as its homology algebra, concentrated in degree zero.  In 3 dimensions we have a result of Bocklandt \cite{CY3dim} that states that a positively graded Calabi-Yau algebra that is given by a quiver with relations is necessarily a Jacobi algebra.  More recently, Van den Bergh has shown in \cite{VdB10} that formal Calabi-Yau algebras (in every dimension) are always quasi-isomorphic to `deformed preprojective algebras,' which are a kind of superpotential algebra in higher dimensions.
\smallbreak
This all makes it fairly natural to ask whether, or maybe even assume that, the fundamental group algebras above are superpotential algebras.  Indeed a specific form for the superpotential presentation of the fundamental group algebra of a 3-dimensional acyclic manifold is conjectured in (\cite{Ginz} Conjecture 6.2.1).  One of the main results of this paper is that this conjecture is false in general in all dimensions greater than or equal to 2 -- it appears to be the case that only quite special manifolds can admit presentations as superpotential algebras.  It turns out that there is a topological obstruction to being able to give such a presentation, which translates to a property of the fundamental group ring.  One of our main results is that the fundamental group algebra of no hyperbolic manifold is a superpotential algebra.
\smallbreak
One of the primary purposes of investigating this question was to understand the motivic Donaldson-Thomas invariants of (acyclic) 3-manifolds.  These are much easier to define and calculate in the case in which the fundamental group algebra is quasi-isomorphic to a superpotential algebra, and so finding out when this is the case is in some way a preliminary step in understanding these invariants, or perhaps just a shortcut.  We offer a little more discussion of the 3-dimensional case at the end of the paper, as well as a partial converse to the negative results contained in the rest of the paper.  In particular, we give a constructive proof that fundamental group algebras of acyclic 3-dimensional trivial circle bundles are the Jacobi algebras associated to superpotential algebras.

\subsection{Structure of the paper}
In Section 2 we introduce some of the basic tools from Algebra that will appear throughout the paper.  It is here that we meet the definition of a `exact\footnote{This terminology was suggested to me by Maxim Kontsevich, it is what is elsewhere called `strongly Calabi-Yau'.} Calabi-Yau algebra', due to Keller.  The proof of the main result rests heavily on this notion.  
\smallbreak
We try to assume only a limited topological background for this paper.  In keeping with this, in Section 3 we give a reasonably self-contained introduction to all the topological notions we will need, especially equivariant homology and simplicial sets.
\smallbreak
In Section 4 we will see the necessary Noncommutative Geometry required to state the construction of a superpotential algebra, and prove that such algebras are exact Calabi-Yau.  This is our means for proving that fundamental group algebras of hyperbolic manifolds are not quasi-isomorphic to superpotential algebras, since we will see that these algebras are not exact Calabi-Yau.  
\smallbreak
In Section 5 we consider the fundamental group algebras of acyclic manifolds, and prove some homological properties regarding them.  In particular, it is here that we offer a complete proof that they are Calabi-Yau algebras.  
\smallbreak
In Section 6 we consider the Connes long exact sequence associated to a manifold $M$, and find obstructions to fundamental group algebras being superpotential algebras.  In particular, we will prove, by analyzing the topological interpretation of the Connes long exact sequence, that the Calabi-Yau structure of Section 5 can never be exact -- so in order for $k[\pi_1(M)]$ to be exact Calabi-Yau there must be more than 1 Calabi-Yau structure on it, which will correspond to a statement about central units in $k[\pi_1(M)]$.  
\smallbreak
We finish with a brief discussion of the case of hyperbolic manifolds, and prove that for all hyperbolic manifolds $M$ of dimension greater than 1, $k[\pi_1(M)]$ is not a superpotential algebra.  This will follow from the fact that $k[\pi_1(M)]$ has trivial centre, and so only 1 possible Calabi-Yau structure - the one that we have proved is \textit{not} exact.  
\smallbreak
Section 7 concerns the Donaldson-Thomas theory of 3-manifolds, and also contains a brief discussion regarding the links between the present work and topological field theory.

\subsection{Acknowledgements}
This project began with an attempt to understand Section 6 of \cite{Ginz}, the foundational paper, the pervasive influence of which will become apparent upon reading the rest of this paper.  While the present work owes a lot to \cite{Ginz}, there is also a strong influence of subsequent work by Keller and Van den Bergh, especially \cite{VdB10}, \cite{CYTC}, \cite{kel09}.  Over the years I have had the fortune of a great many fruitful conversations regarding the subject matter, it would actually take quite a long time to thank everyone who has had something helpful to say.  Special mention should, however, go to Kevin Costello, for pointing me towards \cite{Lur09} and \cite{CostTFT}.  I thank Jacob Lurie for pointing out the relation between TFTs and Calabi-Yau algebras, Graeme Segal, Ezra Getzler, Jeff Giansiracusa and Oscar Randal-Williams for conversations about topology, Michel Van den Bergh (who also suggested improvements to an earlier version) and Bernhard Keller for conversations about algebra, Alastair King and Victor Ginzburg for conversations about Koszul duality, Richard Wade and Dawid Kielak for conversations about geometric group theory, and David Craven for conversations about group theory. Further thanks must also go to Victor Ginzburg, Maxim Kontsevich, Michel Van den Bergh and Bernhard Keller for their insightful comments and corrections of earlier drafts.  Finally, special thanks go to Bal\'{a}zs Szendr\H{o}i for his constant support and mathematical help throughout the last four years.

\section{Definitions and notation}
\subsection{Standing conventions}
We will be primarily concerned with properties of $k$-algebras associated to manifolds, where $k$ will always be a field; the salient influence of the characteristic of $k$ on these properties is that in characteristic 2 we can extend a few results to \textit{non-orientable} manifolds.  This is an artefact of the fact that non-orientable manifolds \textit{do} have an orientation if we take coefficients in a field of characteristic 2.  Throughout the paper, unless otherwise stated, $k$ will denote a field of characteristic zero, and we will be working with differential graded $k$-unital algebras.  We assume always that our algebras are concentrated solely in nonpositive or nonnegative degree, and that our differential increases degree in the first case, and decreases it in the second, so that a $k$-unital differential graded algebra is just a unital differential graded algebra $A$ and an injective (after taking homology) morphism of unital graded algebras $k\rightarrow A$, where $k$ is considered a graded algebra concentrated in degree zero.
\medbreak
We warn the reader at the outset that there is a potential for confusion regarding the switch from homological grading, for which differentials will decrease degree, to cohomological grading, for which differentials will increase degree.  Objects such as the singular chains in spaces will be homologically graded, but more algebraic objects coming from noncommutative geometry will be cohomologically graded.  The result is that when objects from topology are imported into noncommutative geometry the degree of simplicial chains changes by a sign.
\medbreak
\begin{rem}
Our treatment is slightly less general than that of \cite{VdB10}, in that we work always over our field $k$, where perhaps one may want the extra flexibility of working over a finite dimensional semisimple $k$-algebra $l$.  It is possible to extend the technical background to this context (see \cite{VdB10}) at the cost of increasing the risk of making silly technical mistakes (see an earlier draft of this paper).  We remark here that, in the area in which we are interested, it is unlikely that $l$ will ever be anything other than $k$.  This is because we are interested in group algebras, $k[G]$, for $G$ containing no torsion.  It is a conjecture of Kaplansky that there are no nontrivial zero divisors in such a ring, implying that there are no nontrivial idempotents (see \cite{Cli80} for a partial solution, but also \cite{RolfZhu98} and \cite{Pus02} for solutions particularly interesting from the viewpoint of this paper, in which we are concerned with groups that have classifying spaces homotopic to manifolds).  This is an open problem that dates back to the 1940s - one does not expect to just run into counterexamples.
\end{rem}
In most cases we will also have the extra structure of an augmentation, i.e. a morphism of differential graded algebras $A\rightarrow k$ such that the composition
\[
k\rightarrow A \rightarrow k
\]
is equal to the identity.\medbreak
Where tensor products appear unadorned, they are to be taken over $k$.  All tensor products are derived tensor products (though of course this makes no difference for unadorned tensor products).\medbreak
If $M$ is a bimodule over an algebra $D$, then we define the differential graded vector space $M_D:=M/[D,M]$.  For an arbitrary algebra $D$ we define the differential graded vector space $D_{\cyc}:=D/[D,D]$, the supercommutator quotient of $D$.
\medbreak
Given a differential graded algebra $D$ over $k$, we define
\[
D^e:=D\otimes D^{\op}.
\]
The category of $D$-bimodules is naturally identified with the category of left $D^e$-modules, via the natural isomorphism.  There is also a natural equivalence of categories between the categories of right and left $D^e$-modules, thanks to the natural isomorphism $D^e\cong (D^e)^{\op}$.  We consider $D\otimes D$ as a $D^e$-\textit{bi}module, with left action given by the outer bimodule structure, and right action given by the inner.  We define the functor
\[
-^{\vee}=\Hom_{D^e}(-,D\otimes D).
\]
By the above comments this is an endofunctor for the category of $D$-bimodules.
\subsection{Cyclic and Hochschild homology of unital differential graded algebras}
Let $A$ be a unital differential graded algebra.  We remind the reader of the basic facts regarding Hochschild and (ordinary) cyclic homology.  An excellent and highly readable reference for this is \cite{Lod98}.  Firstly, we start with the complex (\textit{the bar complex}):
\[
B(A)=\bigoplus_{i\geq 2} A^{\otimes i}.
\]
The differential on $A$ induces a differential on $B(A)$.  $B(A)$ carries another differential, given by
\[
\delta(a_0,..., a_n)=\sum_{0\leq i\leq n-1}(-1)^i(a_0,..., a_ia_{i+1},..., a_n).
\]
Taking the (sum) total complex of the double complex we obtain a semifree $A^e$-module resolution of the diagonal bimodule $A$.  It follows that we can use this resolution to calculate $\HH_*(A):=A\otimes_{A^e} A$, the \textit{Hochschild homology} of $A$.  We obtain a complex
\[
C(A)=\bigoplus_{i\geq 1} A^{\otimes i}
\]
which again inherits one differential from $A$.  The second differential is given by
\[
\delta(a_0,..., a_n)=\sum_{0\leq i\leq n-1}(-1)^i(a_0,..., a_ia_{i+1},..., a_{n})+(-1)^n(a_na_0, a_1,..., a_{n-1}).
\]
Taking the sum total complex again, we obtain a complex calculating $\HH_{*}(A)$, whose differential we denote $b$.\smallbreak
There is a natural map of complexes $C(A)\rightarrow \overline{C}(A)$, where 
\[
\overline{C}(A):=\overline{A}\oplus (A\oplus \overline{A})\oplus(A\oplus \overline{A}^{\otimes 2})\oplus\ldots,
\]
with $\overline{A}:=A/k$, is the \textit{normalized} Hochschild complex, with differential $\overline{b}$ defined in the same way as $b$.  This is a quasi-isomorphism.\medbreak
There is a second differential on the normalized Hochschild complex $\overline{C}(A)$, which is denoted $B$ (the Connes differential).  It is defined by
\[
\overline{B}(a_0,...,a_n):=\sum_{0\leq i\leq n}(-1)^{ni}(1,a_i,...,a_n,a_0,...,a_{i-1}).
\]
For example we have $\overline{B}(a_0)=(1,a_0)$.  Clearly $\overline{B}$ satisfies $\overline{B}^2=0$.  It also satisfies $\overline{b}\overline{B}+\overline{B}\overline{b}=0$, so we may take the double complex $\overline{CC}(A)$ of complexes
\[
\xymatrix{
A\otimes \overline{A}\otimes \overline{A}\ar[d]^{\overline{\delta}} & A\otimes\overline{A}\ar[d]^{\overline{\delta}} \ar[l]^-{\overline{B}} &\overline{A}\ar[l]^-{\overline{B}} \\
A\otimes\overline{A}\ar[d]^{\overline{\delta}} & \overline{A}\ar[l]^-{\overline{B}}\\
\overline{A}.
}
\]
We define $\HC_{*}(A)$ to be the total homology of the triple complex.  After endowing $\overline{C}(A)$ with a third differential, identically zero, there is an inclusion of triple complexes $\overline{C}(A)\rightarrow \overline{CC}(A)$, whose mapping cone is naturally quasi-isomorphic to $\overline{CC}(A)[2]$.  It follows that there is a long exact sequence of homology
\[
\rightarrow \HC_{n+1}(A)\rightarrow \HC_{n-1}(A)\rightarrow \HH_n(A)\rightarrow \HC_n(A)\rightarrow.
\]
The data $(\overline{C}(A),\overline{b},\overline{B})$ is an example of a \textit{mixed complex}.  These are triples $(C,b,B)$ such that $b^2=0$, $B^2=0$, $bB+Bb=0$, the degree of $b$ is 1, and the degree of $B$ is -1.  Morphisms of mixed complexes are defined in the obvious way, while quasi-isomorphisms are morphisms that induce isomorphisms after taking homology with respect to $b$.  For any mixed complex we can form the above double complex, and define its Hochschild and cyclic homology as before (the Hochschild homology is just homology with respect to $b$).  A quasi-isomorphism $A\rightarrow A'$ of differential graded algebras induces a quasi-isomorphism of mixed complexes
\[
(\overline{C}(A),\overline{b},\overline{B})\rightarrow (\overline{C}(A'),\overline{b'},\overline{B'}).
\]
A quasi-isomorphism of mixed complexes in turn induces a quasi-isomorphism of Hochschild and cyclic homology.  There is a quasi-isomorphism
\[
(\overline{C}(A),\overline{b},\overline{B})\rightarrow (C(A),b,B)
\]
where $B$ is the Connes differential
\[
B(a_0,...,a_n)=\sum_{0\leq i\leq n}((-1)^{ni}(1,a_i,...,a_n,a_0,...,a_{i-1})+(-1)^{n(i+1)}(a_i,1,a_{i+1},...,a_n,a_0,...,a_{i-1})).
\]
This gives rise to a different triple complex $CC(A)$, and by the above comments, we may just as well compute cyclic homology as the total homology of this complex.
\subsection{Calabi-Yau conditions}
In this section we present the original definition of a Calabi-Yau algebra, along with a strengthening of it due to Keller.  The point of this strengthening, for us, is that it provides a tool to study obstructions to being able to give a Calabi-Yau algebra a `superpotential presentation'.
\begin{defn}
An algebra $A$ is \textit{homologically finite} if it is perfect as an $A^e$-module.
\end{defn}
There is a natural quasi-isomorphism of graded vector spaces, for $M$ and $N$ objects of the category of perfect modules over $A^e$:
\[
\xymatrix{
\RHom_{A^e}(M,N)\ar[r]^-{\sim}& M^{\vee}\otimes_{A^e} N.
}
\]
In particular, if $A$ is homologically finite, there is an isomorphism
\[
\HH_d(A)\cong \Ext_{A^e}^{-d}(A^{\vee},A).
\]
\begin{defn}
We say that $A$ is Calabi-Yau of dimension $d$ if it is homologically finite and there is a self-dual isomorphism
\[
f:A^{\vee}[d]\rightarrow A.
\]
\end{defn}
The self-duality is expressed by the fact that $f^{\vee}[d]$ is again a map from $A^{\vee}[d]$ to $A$, and we ask that $f^{\vee}[d]=f$.  A map $f$ as above can be considered as an object in $\HH_d(A)$.  Self-duality amounts to requiring that $f$ be fixed under the flip isomorphism, that is, let $\beta$ act on $A\otimes_{A^e} A$ by flipping the two copies of $A$.  Then $\Ho(\beta)$ fixes those elements of $\HH_{d}(A)$ corresponding to self-dual morphisms.  The Hochschild homology classes that correspond to isomorphisms are important enough to have their own name:
\begin{defn}
Let $\eta\in \HH_{d}(A,A)$ be a class corresponding to an isomorphism
\[
A^{\vee}[d]\rightarrow A.
\]
Then $\eta$ is called \textit{nondegenerate}.
\end{defn}
\begin{defn}
\label{plusnot}
Let $A$ be a homologically finite unital differential graded algebra, and let $\eta\in \HH_t(A)$.  Then we denote by 
\[
\eta^+:A^{\vee}[t]\rightarrow A
\]
the morphism associated to $\eta$ under the isomorphism $\HH_t(A)\cong \Ext^{-t}(A^{\vee},A)$.
\end{defn}
We recall (\cite{VdB10} Proposition C.1):
\begin{lem}
\label{trivflip}
The morphism $\beta$ induces the identity morphism on Hochschild homology.
\end{lem}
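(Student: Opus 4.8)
The plan is to model $\HH_*(A)=A\otimes_{A^e}^{L}A$ by a complex on which ``flipping the two copies of $A$'' is an honest involution of chain complexes, and then to identify the map it induces on homology with the identity. First I would resolve the diagonal bimodule $A$ in \emph{both} tensor slots by its normalized bar resolution $\overline B_\bullet(A)$, $\overline B_n=A\otimes\overline A^{\otimes n}\otimes A$, so that $\HH_*(A)$ is the homology of the total complex of the double complex $X_{p,q}=\overline B_p(A)\otimes_{A^e}\overline B_q(A)$. On $X$ the flip is represented by the transposition $\tau$ that swaps the two factors, carrying the Koszul sign $(-1)^{pq}$ together with the relabelling coming from the canonical isomorphism $A^e\cong(A^e)^{\op}$, $a\otimes b^{\op}\mapsto b\otimes a^{\op}$. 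Because each $\overline B_\bullet(A)$ is semifree over $A^e$, this chain map $\tau$ genuinely computes $\beta$ on homology.

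Next I would introduce the two quasi-isomorphisms $p_1,p_2\colon X\to\overline C(A)$ obtained by collapsing the first, respectively the second, bar factor against the augmentation $\mu\colon\overline B_\bullet(A)\to A$, composed with the standard identifications $A\otimes_{A^e}\overline B_\bullet(A)\cong\overline B_\bullet(A)\otimes_{A^e}A\cong\overline C(A)$. A direct check on the conventions shows that $\tau$ intertwines them with no residual sign, $p_1\circ\tau=p_2$ — this is the point at which the $A^e\cong(A^e)^{\op}$ relabelling and the transposition sign must be seen to cancel against the signs hidden in the two module identifications. Transporting $\beta$ to $\overline C(A)$ via $p_1$ then gives $(p_1)_*\circ\tau_*\circ(p_1)_*^{-1}=(p_2)_*\circ(p_1)_*^{-1}$, so that the Lemma becomes equivalent to the assertion that the two one-sided collapse maps induce the \emph{same} isomorphism $H_*(X)\to\HH_*(A)$.

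That last assertion, $(p_1)_*=(p_2)_*$, is the substance of the proof. I would prove it by writing down an explicit chain homotopy between $p_1$ and $p_2$ built from the extra degeneracy of the bar resolution (its one-sided contracting homotopy $A\to\overline B_\bullet(A)$), exploiting that collapsing $\overline B_\bullet(A)$ against $A$ from either side is governed by the same formula, differing only in the order in which the two augmentations and the contracting homotopy are applied, so that the commutator of the two collapses is a boundary. The main obstacle is exactly this chain-level bookkeeping: keeping the $A^e\cong(A^e)^{\op}$ relabelling, the Koszul signs produced by $\tau$, the identifications with $\overline C(A)$, and the contracting-homotopy terms mutually consistent. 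Everything else is formal, and once the homotopy is written down the conclusion is immediate.

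An alternative, more conceptual, route avoids the bar complex altogether: since $A$ is homologically finite, $\HH_d(A)\cong\Ext^{-d}_{A^e}(A^{\vee},A)$, and under this identification the flip $\beta$ becomes the transpose anti-automorphism $f\mapsto f^{\vee}[d]$ on $\RHom_{A^e}(A^{\vee},A)$. Its triviality then expresses the coherence of the biduality isomorphism $(A^{\vee})^{\vee}\cong A$ available for the dualizable object $A$, i.e. the fact that applying $(-)^{\vee}$ to this biduality map returns the identity; I expect the difficulty to reside in the same place, namely pinning down that coherence explicitly enough to conclude.
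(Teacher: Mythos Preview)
Your bar-resolution argument is correct and is essentially the standard algebraic proof of this fact; the bookkeeping you flag is genuine but tractable, and the reduction to $(p_1)_*=(p_2)_*$ is exactly the right reformulation. However, the paper does not itself prove the general lemma: it is simply quoted from Van den Bergh (\cite{VdB10}, Proposition~C.1). The paper's only independent contribution to this statement is the special case of fundamental group algebras of acyclic manifolds (Proposition~\ref{flip}), and there the argument is entirely different and purely topological: under the identification $\HH_*(k[\pi_1(M)])\cong\Ho_*(LM)$, the flip $\beta$ becomes the self-map of $LM$ induced by the antipodal involution of $\Ss^1$, which is homotopic to the identity, hence acts trivially on homology.

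So your route and the paper's route are genuinely different. Yours is chain-level and works for arbitrary (dg) algebras, at the cost of the sign bookkeeping you describe; the paper's is a one-line geometric argument, transparent and free of signs, but it only applies once one has the loop-space model and hence only to group algebras (or more generally to $\Ch_*(\Omega X)$). Your alternative conceptual approach via biduality is also sound but, as you note, needs homological finiteness of $A$, which the general lemma does not assume; it is therefore weaker in scope than your first argument, though adequate for every application made in the paper.
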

It follows trivially that every isomorphism $A^{\vee}[d]\rightarrow A$ is automatically self-dual.  We'll give an independent proof of this statement in the case of fundamental group algebras in Section 4.
\medbreak
We now come to the crucial definition, due to Keller.
\begin{defn}
A unital differential graded algebra $A$ is called \textit{exact} Calabi-Yau of dimension $d$ if there is a nondegenerate class $\eta\in \HH_{d}(A)$ which is in the image of the boundary map
\[
\xymatrix{
\HC_{d-1}(A)\ar[r]^B& \HH_d(A).
}
\] 
\end{defn}

\section{Some topological basics}
\subsection{Loop spaces}
We are concerned throughout with compact differentiable manifolds, which are of course topological spaces.  Given a topological space $X$ we denote by $\Ch^*(X)$ the differential graded algebra of singular cochains on $X$.  The algebra structure is induced by the standard cup product.  Dually, we denote by $\Ch_*(X)$ the differential graded vector space of singular chains on $X$.
\smallbreak
Given a space $X$ we define $LX$ to be the space of continuous maps from $\Ss^1$ to $X$.  $\Ss^1$ has a natural basepoint.  If $X$ has a basepoint also, then we define $\Omega X$ to be the subspace of $LX$ consisting of basepoint preserving maps.  By definition, the path components of $\Omega X$ are in bijective correspondence with $\pi_1(X,x)$, the usual fundamental group of $X$.  There is a natural map
\[
\Omega X \times\Omega X \rightarrow \Omega X
\]
given by concatenation of loops.  One can easily see that this does \textit{not} give $\Ch_*(\Omega X)$ the structure of an associative algebra, just by considering the degree zero part.  However it \textit{is} the archetypal $A_{\infty}$-algebra, and so, in particular, we obtain an algebra $\Ho_*(\Omega X)$ (see \cite{keller-intro} for a nice introduction to $A_{\infty}$-algebras, and \cite{KLH} for a more comprehensive guide, though in fact no essential use of $A_{\infty}$-algebras will be made in this paper).  Given a space $X$, a \textit{cover} of $X$ is a continuous map $f:Y\rightarrow X$ such that for any point $x\in X$ there is an open $U$ containing $x$, and an isomorphism 
\[
g:f^{-1}(U)\rightarrow\coprod_{i\in I} U_i
\]
where each $U_i$ is identified with $U$, such that the diagram
\[
\xymatrix{
\coprod_{i\in I} U_i\ar[dr]\\
f^{-1}(U)\ar[u]^g\ar[r]^f & U
}
\]
commutes.  Given a space $X$ satisfying certain connectedness assumptions (all our spaces will indeed satisfy these assumptions), one can show that there is a connected cover $\mathcal{U}X$ of $X$ with trivial fundamental group, which is called the \textit{universal cover}.  This is unique up to homeomorphism.
\smallbreak
Given a topological group $G$ one can always construct a weakly contractible $G$-space $EG$ such that $G$ acts on $EG$ freely.  Taking the quotient by the $G$ action we arrive at the \textit{classifying space} $BG$, which is unique up to homotopy equivalence.  If the universal covering space $\mathcal{U}X$ of a space $X$ is contractible, then one can take it to be $E\pi_1(X)$, where $\pi_1(X)$ is given the discrete topology, since $\pi_1(X)$ acts freely on it by deck transformations, and one recovers that $X$ is homotopic to $BG$ for the group $G=\pi_1(X)$.
\begin{defn}
A compact manifold $M$ will be called \textit{acyclic} if its universal cover is contractible.
\end{defn}
One can easily show that for an acyclic manifold $M$ there is a homotopy equivalence $\Omega M\simeq \pi_1(M)$, where $\pi_1(M)$ is considered as a discrete topological space.  Furthermore, this induces a quasi-isomorphism of $A_{\infty}$-algebras
\[
\Ch_{*}(\Omega M)\rightarrow \Ho_0(\Omega M)\rightarrow k[\pi_1(M)],
\]
where $k[\pi_1(M)]$ is considered an algebra concentrated in degree zero, i.e. an $A_{\infty}$-algebra with trivial differential and higher multiplications.  This explains why we can essentially ignore $A_{\infty}$-structures in this paper, since we deal with acyclic manifolds.
\medbreak
Given a topological space $X$, the space $LX$ carries a natural $\Ss^1$-action, given by the action of $\Ss^1$ on itself.  If a space has an action by a group $G$, one defines the equivariant homology and cohomology as follows: by construction, the classifying space $BG$ has on it a principal $G$-bundle, and the total space of this bundle, denoted $EG$, is contractible, and carries a free $G$-action.  Given a $G$-space $X$ we consider instead the space
\[
EG\times X.
\]
This is homotopic to $X$, since $EG$ is contractible.  We give this space the diagonal $G$-action
\[
g(a,b)=(ga,gb).
\]
This action is free, and we let 
\[
EG\times^G X
\]
be the orbit space.  In algebraists' terms, this is something akin to taking the derived functor of $G$-invariant chains, which necessitates taking a `free' resolution of $X$ as a $G$-module.  We define the $G$-equivariant homology and cohomology of $X$, denoted $\Ho_*^G(X)$ and $\Ho^*_G(X)$ respectively, to be simply the ordinary homology and cohomology of $EG\times^G X$.
\smallbreak
\begin{examp}
Consider the space $\Ss^{\infty}$ as the union of an ascending chain of unit balls $\Ss^{2n+1}\subset \mathbb{R}^{2n+2}\cong\Cp^{n+1}$.  Then $\Ss^{\infty}$ can easily be shown to be contractible, and it carries a free $\Ss^1$-action given by complex multiplication.  The quotient space is $\Cp\mathbb{P}^{\infty}\simeq B\Ss^1$.
\end{examp}
We restrict attention now to the case $G=\Ss^1$.  Given a $\Ss^1$-fibration
\[
\Ss^1\rightarrow E\rightarrow B
\]
one obtains the \textit{Gysin long exact sequence}
\begin{equation}
\label{Gysin}
\xymatrix{
\ar[r]&\Ho_n(B)\ar[r]& \Ho_{n-2}(B)\ar[r] &\Ho_{n-1}(E)\ar[r] & \Ho_{n-1}(B)\ar[r]&.
}
\end{equation}
Taking the fibration
\[
\Ss^1\rightarrow \Ss^{\infty}\times X \rightarrow \Ss^{\infty}\times^{\Ss^1} X
\]
we obtain the long exact sequence
\[
\xymatrix{
\ar[r] &\Ho^{\Ss^1}_n(X)\ar[r] &\Ho^{\Ss^1}_{n-2}(X)\ar[r]^B& \Ho_{n-1}(X)\ar[r]& \Ho^{\Ss^1}_{n-1}(X)\ar[r]&,
}
\]
since the projection $\Ss^{\infty}\times X\rightarrow X$ induces an isomorphism in homology.  The existence and exactness of this sequence arise also from the explicit chain models of ordinary and equivariant homology given in \cite{Jones87}.  
\smallbreak
Given a space $X$, $LX$ has a convenient description as a \textit{homotopy fibre product}.  The existence of such objects, and their properties, can be deduced from the existence of a model structure on the category of topological spaces, but there is no need to invoke such technicalities here.  We offer a seemingly ad hoc definition of homotopy fibre products:
\begin{defn}
Given a diagram
\[
\xymatrix{
&A\ar[d]^f\\
B\ar[r]^g&C
}
\]
we form the homotopy fibre product as follows.  We first replace $B$ by $B'$, the space of pairs $(b,\gamma_b)$, where $b$ is a point in $B$ and $\gamma_b$ is a map $[0,1]\rightarrow C$ satisfying $\gamma_b(0)=g(b)$.  Then we replace $g$ by the map taking $(b,\gamma_b)$ to $\gamma_b(1)$.  Finally, we take the fibre product of this new diagram.
\end{defn}
\begin{examp}
\label{freeloopex}
The free loop space $LX$ is the homotopy fibre product of
\[
\xymatrix{
&X\ar[d]_-{\Delta}\\
X\ar[r]^-{\Delta}&X\times X
}
\]
where $\Delta$ is the diagonal embedding.
\end{examp}
We deduce from Example \ref{freeloopex} and general facts about homotopy limits that a homotopy equivalence $X\rightarrow Y$ induces a homotopy equivalence $LX\rightarrow LY$.  We say that $A$ is the \textit{homotopy fibre} of a map $f:B\rightarrow C$ of pointed spaces if it is the homotopy fibre product given by the diagram
\[
\xymatrix{
&\star \ar[d]\\
B\ar[r]^f &C,
}
\]
where $\star$ is the space consisting of a point.  In the above situation we obtain a long exact sequence of homotopy groups
\begin{equation}
\label{hLES}
\xymatrix{
\ar[r]&\pi_{n+1}(C)\ar[r]& \pi_n(A)\ar[r] &\pi_n(B)\ar[r]& \pi_n(C)\ar[r]&\pi_{n-1}(A)\ar[r]&
}
\end{equation}
Here the `kernel' of the map $\pi_1(C)\rightarrow \pi_0(A)$ is the preimage of the connected component containing the basepoint.  
\medbreak
Given a space $X$, endowed with the trivial $\Ss^1$-action, there is a natural $\Ss^1$-equivariant map 
\[
\const:X\rightarrow LX
\]
given by sending a point to the constant loop at that point.  If $X$ is a pointed space, then this is a morphism of pointed spaces, with the point of $LX$ being the constant loop at the basepoint.
\begin{prop}
\label{hequivalence}
Let $X$ be an acyclic space.  Then $\const$ is a homotopy equivalence onto $(LX)_0$, the connected component of $LX$ containing the basepoint, and 
\[
\const^{\Ss^1}_*:\Ho_*^{\Ss^1}(X)\rightarrow \Ho_*^{\Ss^1}((LX)_0)
\]
is an isomorphism.
\end{prop}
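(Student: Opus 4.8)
The plan is to establish the homotopy equivalence first, by a direct analysis of the evaluation fibration, and then to bootstrap this to the statement about equivariant homology by comparing the two Borel fibrations over $B\Ss^1$.

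For the first part, I would work with the evaluation fibration $\Omega X\to LX\xrightarrow{\mathrm{ev}}X$, where $\mathrm{ev}$ sends a loop to its value at the basepoint of $\Ss^1$. Since $X$ is acyclic its universal cover is contractible, so $\pi_n(X)=0$ for every $n\geq 2$ and $X$ is aspherical. Restricting $\mathrm{ev}$ to the union of path components $(LX)_0$ still gives a Serre fibration onto the connected space $X$, since $(LX)_0$ contains every constant loop, and a restriction of a fibration to a union of components of the total space over a connected base is again a fibration. Its fibre over the basepoint is exactly $(\Omega X)_0$, the component of the constant loop in $\Omega X$: a based loop is freely null-homotopic if and only if it is based-null-homotopic, as one sees by transporting a free null-homotopy along the track of the basepoint. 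Now $\pi_m((\Omega X)_0)=\pi_{m+1}(X)=0$ for $m\geq 1$, so $(\Omega X)_0$ is weakly contractible, whence $\mathrm{ev}|_{(LX)_0}\colon(LX)_0\to X$ is a weak homotopy equivalence; since both spaces have the homotopy type of CW complexes, Whitehead's theorem upgrades this to a genuine homotopy equivalence. As $\const$ is a section of $\mathrm{ev}$, it is a homotopy inverse of $\mathrm{ev}|_{(LX)_0}$, and hence itself the asserted homotopy equivalence onto $(LX)_0$.

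For the second part, I would use that for any $\Ss^1$-space $Y$ the Borel construction fits into the fibre bundle $Y\to \Ss^{\infty}\times^{\Ss^1}Y\to B\Ss^1$ associated to the universal principal bundle $\Ss^{\infty}\to B\Ss^1$. Because $\const\colon X\to(LX)_0$ is $\Ss^1$-equivariant, it induces a morphism of such bundles over the common base $B\Ss^1\simeq\Cp\mathbb{P}^{\infty}$, and on fibres this morphism is $\const$, which by the first part is a homology isomorphism. Since $B\Ss^1$ is simply connected, the coefficient systems $\Ho_q$ of the two bundles are untwisted, so $\const$ induces an isomorphism on the $E^2$-pages $\Ho_p(B\Ss^1;\Ho_q(X))\to\Ho_p(B\Ss^1;\Ho_q((LX)_0))$ of the associated Serre spectral sequences; by the comparison theorem for first-quadrant spectral sequences the induced map on abutments, which is precisely $\const^{\Ss^1}_*\colon\Ho_*^{\Ss^1}(X)\to\Ho_*^{\Ss^1}((LX)_0)$, is an isomorphism.

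I expect the only genuinely delicate point to be the identification of the fibre of $(LX)_0\to X$ with $(\Omega X)_0$ rather than with all of $\Omega X$, together with the verification that restricting $\mathrm{ev}$ to a union of components of $LX$ still leaves a fibration; everything downstream — weak contractibility from asphericity, and the spectral sequence comparison — is routine. It is worth noting that $\const$ is in general \emph{not} an $\Ss^1$-equivariant homotopy equivalence: its behaviour on fixed-point sets of finite cyclic subgroups of $\Ss^1$ can detect torsion in $\pi_1(X)$, which is precisely why the second part must be argued through Borel homology and a spectral sequence comparison rather than through equivariant homotopy theory.
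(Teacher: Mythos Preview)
Your argument is correct, and the overall architecture---first prove $\const$ is a homotopy equivalence, then promote to Borel constructions via the fibrations over $B\Ss^1$---is the same as the paper's. The implementations differ in both halves, however. For the first part, the paper computes the homotopy fibre of $\const\colon X\to LX$ directly and identifies it with $\Map_*(\Ss^2,X)=\Omega^2 X$, which is weakly contractible since $X$ is aspherical; you instead work with the evaluation fibration $\Omega X\to LX\to X$, restrict to $(LX)_0$, and show its fibre $(\Omega X)_0$ is weakly contractible. These are dual viewpoints on the same fibre sequence, and your handling of the delicate point (that the fibre really is $(\Omega X)_0$) is fine. For the second part, the paper applies the five lemma to the long exact homotopy sequences of the two Borel fibrations and then Whitehead, thereby obtaining the slightly stronger conclusion that $\const^{\Ss^1}$ is a homotopy equivalence; your Serre spectral sequence comparison gives exactly the homology isomorphism stated in the proposition, which is all that is used downstream. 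Either route is perfectly adequate here.
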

\begin{proof}
Let $T$ be the homotopy fibre of the map $\const$.  Then $T$ is naturally identified with the space of basepoint preserving maps $\Ss^2\rightarrow X$, which is naturally identified with the space of basepoint preserving maps $\Ss^1\rightarrow \Omega X$, which is a weakly contractible space since $\Omega X$ is homotopic to a discrete set.  The result follows by using the long exact sequence (\ref{hLES}) to show that $\const$ induces isomorphisms on $\pi_i$ groups, and then applying Whitehead's Theorem.
\smallbreak
Since $\const$ is $\Ss^1$-equivariant, we deduce\footnote{Thanks to Jeff Giansiracusa for pointing out this quick proof} from the diagram of fibre sequences
\[
\xymatrix{
X\ar[d]\ar[r]&X\times^{\Ss^1}\Ss^{\infty}\ar[d]^-{\const^{\Ss^1}}\ar[r]&\Cp\textbf{P}^{\infty}\ar[d]\\
LX_0\ar[r]&LX_0\times^{\Ss^1}\Ss^{\infty}\ar[r]&\Cp\textbf{P}^{\infty}
}
\]
the five lemma, and another application of Whitehead's theorem, that the map $\const^{\Ss^1}$ is a homotopy equivalence.
\end{proof}
\subsection{Simplicial sets}
\label{ssets}
We will be using simplicial sets in this paper.  Excellent references for these are \cite {May92}, and \cite{GoeJar99}, though they are also covered in \cite{Lod98}, which has a more algebraic viewpoint.  We give a quick account.  First recall the category $\Delta$.  This has as objects the numbers $\textbf{n}\geq 0$.  Morphisms from $\textbf{n}$ to $\textbf{m}$ are given by weakly order-preserving maps of sets
\[
[0,...,n]\rightarrow [0,...,m],
\]
with composition defined in the obvious way.  A simplicial set is a functor from $\Delta^{\op}$ to $\Set$, the category of Sets.  These form a category $\SSet$, in which the morphisms are natural transformations.  We define the geometric n-simplex to be the space
\[
|\Delta^n|=\{(x_0,...,x_n)\in\mathbb{R}^{n+1}, 0\leq x_i\leq 1, \sum_{0\leq i\leq n} x_i=1\}.
\]
Each vertex of $|\Delta^n|$ has precisely 1 coordinate equal to 1, and so the ordering of the axes induces an ordering of the vertices.  Let $\theta$ be an order-preserving map $[0,...,n]\rightarrow [0,...,m]$.  Then $\theta$ induces a unique affine map $|\theta|:|\Delta^n|\rightarrow |\Delta^m|$ sending the $i$th vertex to the $\theta(i)$th vertex.
\smallbreak
The \textit{standard} n-simplex is the simplicial set $\Delta^n:=\Hom_{\Delta}(-,\textbf{n})$.  Clearly this is a contravariant functor to sets, and so it is a simplicial set.  If $S$ is a simplicial set there is a bijection of sets $S_n\cong \Hom_{\SSet}(\Delta^n,S)$.  Given a simplicial set $S$ we define its simplex category $S_{\simp}$ to be the category having as objects the maps
\[
\Delta^n\rightarrow S,
\]
and as morphisms the commutative diagrams
\[
\xymatrix{
\Delta^n\ar[dr]\ar[rr]^{\theta}&&\Delta^m\ar[dl]\\
&S.
}
\]
Note that in the above diagram, $\theta$ can be identified with a morphism $\textbf{n}\rightarrow \textbf{m}$ of $\Delta$, by the Yoneda lemma.  Given a simplicial set $S$ we define a functor $F_S$ from $S_{\simp}$ to $\Top$, the category of topological spaces, sending maps $\Delta^n\rightarrow S$ to $|\Delta^n|$, and sending objects of the simplex category of $S$ as given above to $|\theta|$.  Finally, the geometric realisation of $S$, denoted $|S|$, is defined to be the colimit of this functor.

\begin{examp}
\label{nerve}
Let $\overrightarrow{n}$ denote the free path category of the quiver
\[
\xymatrix{
0\ar[r]&1\ar[r]&...&\ar[r]&n.
}
\]
Then the functors $\overrightarrow{n}\rightarrow\overrightarrow{m}$ are in natural bijection with the morphisms $\textbf{n}\rightarrow \textbf{m}$ in $\Delta$.  Given a category $\mathcal{C}$, this enables us to build a simplicial set $B(\mathcal{C})$, such that $B(\mathcal{C})_n:=\Fun(\overrightarrow{n},\mathcal{C})$.  If we view a discrete group $G$ as a category with one object, and one morphism for each element of $G$, this gives us the \textit{nerve} of $G$, denoted $B(G)$.  Taking geometric realizations, we have that $|B(G)|\simeq BG$, hence the notation.
\end{examp}

\bigbreak
Let $S$ and $T$ be simplicial sets.  Then we obtain a contravariant functor $S\times_0 T$ from $\Delta$ to $\Set\times \Set$.  Composing with the functor $\Set\times \Set\rightarrow \Set$ given by taking Cartesian products, we get a new contravariant functor from $\Delta$ to $\Set$.  We define $S\times T$ to be this functor.  So in particular, $(S\times T)_n:=S_n\times T_n$.
\medbreak
A morphism $\theta:\textbf{n}\rightarrow \textbf{m}$ in $\Delta$ is called a codegeneracy if $n>m$.  If $S$ is a simplicial set, then we say that a simplex $s\in S_n$ is degenerate if it factors through a degeneracy, that is, it is part of a diagram
\[
\xymatrix{
\Delta^n\ar[dr]\ar[rr]^{\theta}&&\Delta^m\ar[dl]\\
&S.
}
\]
where $\theta$ is induced by a codegeneracy, i.e. $n>m$.  We say that a simplicial set is \textit{finite} if it has only finitely many nondegenerate simplices.\smallbreak
\begin{examp}
The second simplest simplicial set is $\star$.  This has precisely one simplex in every dimension, and only the 0-dimensional simplex is nondegenerate.
\end{examp}

\bigbreak
We define the fundamental group of a simplicial set $S$ to be the fundamental group of its realization.  Given a simplicial set $S$ with fundamental group $G$ we obtain a unique (up to a naturally defined notion of homotopy) map $S\rightarrow B(G)$ preserving the fundamental group (this follows from the well known Quillen equivalence between simplicial sets and spaces).  
\smallbreak
$B(G)$ has a natural construction as the base of a simplicial principal $G$-bundle $E(G)$ (considering $G$ as a discrete simplicial group).  We define $\mathcal{U}S$ via the pullback diagram
\[
\xymatrix{
\mathcal{U}S\ar[r]\ar[d]& E(G)\ar[d]\\
S\ar[r] &B(G).
}
\]
See Chapter 5 of \cite{GoeJar99} for more details.  $|\mathcal{U}S|$ is a universal cover of $|S|$; in particular, all spaces obtained as realizations of simplicial sets have universal covers.
\bigbreak

Let $T$ be a simplicial set.  Then we define the simplicial homology $\Ho_{\bullet}(T)$ in the obvious way: define by $\Ch_*(T)$ the graded vector space with basis (in degree $n$) given by the $n$-simplices of $T$, and the differential
\[
d=\sum_{0\leq i\leq n}(-1)^i d_i^*
\]
where $d_i^*$ is the map induced by the morphism $d_i$ in the category $\Delta$, which in turn is defined as the unique order-preserving injective map from $\textbf{n-1}$ to $\textbf{n}$ without $i$ in its image.  Given a simplicial set $S$ we denote by $\Ch_*(S)$ the complex constructed in this way.  The graded vector space given by the degenerate simplices forms a subcomplex, which is in fact a direct summand of $\Ch_*(S)$, with complement the \textit{normalized complex} of $S$.  This is defined explicitly (as a subcomplex $\No_*(S)\subset \Ch_*(S)$) as follows: let 
\begin{equation}
\label{normal}
\No_{n}(S)=\bigcap_{i<n} \Ker(d_i^*),
\end{equation}
then it is an easy check that this in fact forms a subcomplex.\smallbreak
$\No_*(S)$ admits a simpler description than the one above.  First, define $S_{\red}$ to be set of nondegenerate simplices of $S$.  Then let $\overline{\Ch}_*(S)$ be the complex whose degree $n$ part has basis given by $1_{s}$ for $s\in (S_{\red})_n$.  Then define
\begin{align*}
\overline{d}^*_i(1_{s})&=d^*_i(1_{s})\hbox{ if } d^*_i(s)\in S_{\red}\\
&=0\hbox{ otherwise.}
\end{align*}
Equivalently, one may form $\DD_*(S)\subset \Ch_*(S)$, the subcomplex of $\Ch_*(S)$ spanned by degenerate simplices, and let $\overline{\Ch}_*(S)=\Ch_*(S)/\DD_*(S)$.

\bigbreak
For an arbitrary simplicial set $S$ we have 
\[
\Ho_{\bullet}(|S|)\cong \Ho_{\bullet}(\Ch_*(S))\cong \Ho_{\bullet}(\overline{\Ch}_*(S))
\]
where homology on the left is the usual singular homology.  We adopt the notation
\[
\Ho_{\bullet}(S):=\Ho_{\bullet}(\Ch_*(S))\cong \Ho_{\bullet}(\overline{\Ch}_*(S)).
\]
We say that a simplicial set $S$ is contractible if its realization is.  Now take instead the \textit{cochain} complex $\Ch^*_{\cpct}(S)$ with basis given by the dual basis to the obvious basis for $\Ch_*(S)$, and define the \textit{compactly supported} cohomology $\Ho_{\cpct}^{\bullet}(S)$ of a simplicial complex $S$ to be the cohomology of this cochain complex.  Then for an arbitrary simplicial set we have instead an isomorphism
\[
\Ho_{\cpct}^{\bullet}(|S|)\cong \Ho_{\cpct}^{\bullet}(S),
\]
where cohomology on the left is compactly supported cohomology.\smallbreak
\begin{defn}
\label{contraction}
Given a simplicial set $S$, and a subsimplicial set $\gamma:U\rightarrow S$, we define $S_{c(U)}$ to be the pushout
\[
\xymatrix{
U\ar[r]\ar[d]^{\gamma}&\star \ar[d]\\
S\ar[r]&S_{c(U)}.
}
\]
\end{defn}
Informally, this is obtained by contracting $U$.  Note that the realization functor $|\cdot |$ is a left adjoint, and so it preserves this colimit.  This makes the following technical lemma a straightforward verification:
\begin{lem}
\label{contsimp}
Let $S$ be a simplicial set.  Let $\{U_i,i\in I\}$ be a set of disjoint finite contractible subsimplicial sets of $S$.  Let $S'=S_{c(\coprod_{i\in I} U_i)}$.  Then there are natural isomorphisms
\begin{equation}
\Ho_{\bullet}(S)\cong \Ho_{\bullet}(S')
\end{equation}
\begin{equation}
\label{contentious}
\Ho^{\bullet}_{\cpct}(S)\cong \Ho^{\bullet}_{\cpct}(S').
\end{equation}
\end{lem}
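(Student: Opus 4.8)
The plan is to bypass geometric realisations entirely and argue at the level of chain complexes, using that $\Ch_*(-)$ preserves colimits. Write $U=\coprod_{i\in I}U_i$ and $V=\coprod_{i\in I}\star$, and present $S'$ as the pushout of $V\xleftarrow{\ \rho\ }U\hookrightarrow S$, where $\rho$ collapses $U_i$ onto the $i$-th copy of $\star$ (this is Definition~\ref{contraction}, applied to each component). Since $\Ch_*(S)$ is obtained by applying the colimit-preserving free $k$-module functor levelwise to $S_{\bullet}$, the functor $\Ch_*(-)$ preserves colimits; applying it to the defining pushout square, and using that $\Ch_*(U)\hookrightarrow\Ch_*(S)$ is a degreewise split monomorphism (the inclusion of a subcomplex spanned by a subset of the simplex basis), the resulting pushout of complexes takes the form of a short exact sequence
\begin{equation}\label{sesdagger}
0\longrightarrow \Ch_*(U)\xrightarrow{\ (\iota,\,-\rho)\ }\Ch_*(S)\oplus\Ch_*(V)\longrightarrow \Ch_*(S')\longrightarrow 0.
\end{equation}

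Each $U_i$ is contractible, so $\Ch_*(U_i)\to\Ch_*(\star)$ is a quasi-isomorphism: both complexes have homology $k$ concentrated in degree zero, and the map sends a vertex to a vertex. As $\Ch_*$ turns coproducts into direct sums and homology commutes with direct sums, the induced map $\rho_*\colon\Ho_{\bullet}(U)\to\Ho_{\bullet}(V)$ is an isomorphism. (This is precisely where one wants $V$ to be a disjoint union of points and not a single point: for $|I|\geq 2$ the complex $\Ch_*(U)$ fails to be acyclic after augmentation, so collapsing all of $U$ to one point would genuinely change the homology.) In the long exact homology sequence of \eqref{sesdagger} the second component of $\Ho_n(U)\to\Ho_n(S)\oplus\Ho_n(V)$ is therefore an isomorphism, so this map is split injective with cokernel canonically identified with $\Ho_n(S)$; hence all connecting maps vanish and $\Ho_n(S')\cong\Ho_n(S)$. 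Because the whole construction is functorial in the pair $(S,\{U_i\})$, the isomorphism is natural.

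For compactly supported cohomology one cannot invoke homotopy invariance, and this is where the finiteness of the $U_i$ enters. The key point is that \eqref{sesdagger} is, in each degree, a \emph{based} short exact sequence of vector spaces: replacing every simplex $u$ of $U$ in the standard basis of $\Ch_n(S)\oplus\Ch_n(V)$ by $u-\rho(u)$ exhibits that basis as the disjoint union of a basis of $\Ch_n(U)$ and a subset mapping bijectively onto the simplex basis of $\Ch_n(S')$ --- this uses that $\rho$ carries each basis simplex of $U$ to a single basis simplex of $V$. The restricted-dual functor (finitely supported functionals with respect to the simplex bases) is exact on based short exact sequences, carries $\Ch_*$ to $\Ch^*_{\cpct}$, and turns coproducts into direct sums, so dualizing \eqref{sesdagger} yields a short exact sequence
\[
0\longrightarrow \Ch^*_{\cpct}(S')\longrightarrow \Ch^*_{\cpct}(S)\oplus\Ch^*_{\cpct}(V)\longrightarrow \Ch^*_{\cpct}(U)\longrightarrow 0.
\]
Since each $U_i$ is finite, $|U_i|$ is compact and contractible, so $\Ho^{\bullet}_{\cpct}(U_i)\cong\Ho^{\bullet}_{\cpct}(\star)\cong k$ in degree zero, and the map $\Ch^*_{\cpct}(V)\to\Ch^*_{\cpct}(U)$ realizes this isomorphism on cohomology; the long exact cohomology sequence then collapses exactly as in the homology case, giving the natural isomorphism $\Ho^{\bullet}_{\cpct}(S')\cong\Ho^{\bullet}_{\cpct}(S)$.

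I expect the one genuinely delicate point to be the claim just made for $\Ho^{\bullet}_{\cpct}$: exactness of \eqref{sesdagger} is \emph{not} automatically preserved by the restricted dual once the spaces are infinite-dimensional, and the only real work in the proof is checking that \eqref{sesdagger} is based in the sense above (which is where finiteness of the $U_i$ is needed, and why the statement would fail for the naive one-point collapse). Everything else is a formal consequence of $\Ch_*(-)$ preserving colimits together with the contractibility of the $U_i$; note that the homology half could alternatively be seen topologically, since collapsing a contractible subcomplex of a CW complex is a homotopy equivalence and hence so is $|S|\to|S'|$ --- but this observation is useless for $\Ho^{\bullet}_{\cpct}$, so the chain-level argument is needed in any case.
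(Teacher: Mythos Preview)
Your argument is correct and considerably more detailed than what the paper provides: the paper simply calls this ``a straightforward verification,'' gesturing at the fact that $|\cdot|$ preserves the defining pushout, whereas you give a complete chain-level Mayer--Vietoris argument that avoids geometric realizations altogether.  That choice pays off for the compactly supported part, where homotopy invariance is unavailable and one must argue with the cochain complexes directly.

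Two small remarks.  First, you are right to replace the single $\star$ in Definition~\ref{contraction} by $V=\coprod_{i\in I}\star$: as you note, literally collapsing $\coprod_i U_i$ to a \emph{single} point can change $\Ho_0$ (and worse---e.g.\ collapsing the two endpoints of an interval produces a circle), so the lemma as stated only makes sense with your reading, which is also the one the paper actually uses when it later contracts a forest of trees in a universal cover.  Second, your closing parenthetical slightly mislocates the role of finiteness.  The ``based'' splitting of \eqref{sesdagger} that you describe works regardless of whether the $U_i$ are finite: the change of basis $u\mapsto u-\rho(u)$ is locally finite in both directions, so the restricted duals with respect to the old and new bases agree.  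The place finiteness is genuinely needed is exactly where you used it earlier, namely to conclude that $|U_i|$ is compact and hence $\Ho^{\bullet}_{\cpct}(U_i)\cong k$ (and, if you prefer to phrase it dually, to ensure that for each simplex $v$ of $V$ the fibre $\rho^{-1}(v)$ is finite, so that the restricted-dual maps are well defined).  This is a point of exposition only; the logic of your proof is sound.
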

Note that the finiteness assumption is required for (\ref{contentious}), since compactly supported cohomology is not a homotopy invariant.
\begin{lem}
\label{fincon}
Let $S$ be a finite simplicial set, and let $T\subset S$.  Then $S_{c(T)}$ is finite.
\end{lem}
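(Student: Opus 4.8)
The plan is to observe that $S_{c(T)}$ is built from $S$ by a levelwise quotient, and then to bound its nondegenerate simplices by those of $S$. Since $\SSet=\Fun(\Delta^{\op},\Set)$, colimits are computed levelwise, so the pushout of Definition~\ref{contraction} satisfies $(S_{c(T)})_n=S_n\sqcup_{T_n}\star_n$; because $\star$ has a single simplex in each degree, this is simply $S_n$ with all of $T_n$ collapsed to one point. If $T=\emptyset$ this gives $S_{c(T)}=S\sqcup\star$, which visibly has exactly one more nondegenerate simplex than $S$ and hence is finite; so I may assume $T\neq\emptyset$, in which case $T_0\neq\emptyset$ (applying face operators to any simplex of $T$), hence $T_n\neq\emptyset$ for every $n$ (apply degeneracy operators to a $0$-simplex), and therefore the canonical map $q\colon S\to S_{c(T)}$ is surjective in every degree.

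The main point is then a general fact: \emph{if $q\colon S\to X$ is a levelwise surjective map of simplicial sets, then every nondegenerate simplex of $X$ is of the form $q(s)$ for some nondegenerate simplex $s$ of $S$.} To prove this I would invoke the Eilenberg--Zilber lemma (see \cite{May92}, \cite{GoeJar99}): every simplex of a simplicial set factors uniquely as $\sigma^*(s_0)$ with $\sigma$ a codegeneracy and $s_0$ nondegenerate. Given a nondegenerate $y\in X_n$, choose $s\in S_n$ with $q(s)=y$ and write $s=\sigma^*(s_0)$; by naturality of $q$ we get $y=\sigma^*(q(s_0))$, and since $y$ is nondegenerate $\sigma$ must be the identity, so $s=s_0$ is nondegenerate.

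Applying this to $q\colon S\to S_{c(T)}$, the set of nondegenerate simplices of $S_{c(T)}$ is contained in the image of the set of nondegenerate simplices of $S$, which is finite by hypothesis. Hence $S_{c(T)}$ has only finitely many nondegenerate simplices, i.e.\ it is finite. I do not anticipate a genuine obstacle here: the only step requiring a moment's care is the Eilenberg--Zilber argument showing that a levelwise surjection carries nondegenerate simplices onto a generating set, and everything else is a formal consequence of computing the pushout degreewise.
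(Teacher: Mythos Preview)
Your argument is correct. The paper, however, takes a different and more categorical route: it simply invokes the standard characterization that a simplicial set is finite if and only if it is a colimit of a finite diagram of standard simplices. Since $S$, $T$ (as a subsimplicial set of the finite $S$), and $\star$ are each finite in this sense, and $S_{c(T)}$ is by definition a pushout of these three, one obtains $S_{c(T)}$ as a finite colimit of finite colimits of standard simplices, hence as a finite colimit of standard simplices, hence finite.

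Your approach is more concrete: you compute the pushout levelwise, observe that the quotient map $q\colon S\to S_{c(T)}$ is degreewise surjective (when $T\neq\emptyset$), and then use the Eilenberg--Zilber factorization to show that every nondegenerate simplex of the quotient lifts to a nondegenerate simplex of $S$. This avoids the colimit characterization of finiteness and in fact yields the slightly sharper statement that $q$ maps the nondegenerate simplices of $S$ onto those of $S_{c(T)}$. The paper's one-line argument is slicker and generalizes immediately to arbitrary finite colimits, but it leans on a fact that, while standard, is perhaps less universally familiar than Eilenberg--Zilber; your version has the virtue of being entirely self-contained once that lemma is granted.
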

This lemma follows from the standard fact that a simplicial set is finite if and only if it is a colimit of a finite diagram of standard simplices.
\begin{examp}
An abstract simplicial complex is a special type of simplicial set.  It is given by data $(V,T)$, where $V$ is a set (called the set of \textit{vertices}), and $T$ is a subset of the power set of $V$, containing all the singleton sets, containing no infinite sets, and closed under taking subsets.  Given such data we next give $V$ a total order.  Then we can construct a simplicial set $S$, such that the nondegenerate elements of $S_n$ are the elements of $T$ of cardinality $n+1$, and the face maps $d_i^*$ (restricted to nondegenerate simplices) are determined by the ordering of $V$.  By abuse of notation, we will call a simplicial set a simplicial complex if it is obtained in this way.  Given a (differentiable) manifold $M$, we can always find a simplicial complex $S$ such that $|S|$ is homeomorphic to $M$.  If $M$ is compact this simplicial complex may be chosen to be a finite simplicial set.
\end{examp}
\medbreak
We end this subsection with a well-known proposition.  We include the proof for later use.
\begin{prop}
\label{loops}
Let $X$ be a $BG$ for some discrete group $G$.  Then 
\[
\HH_d(k[G])\cong \Ho_d(LX).
\]
\end{prop}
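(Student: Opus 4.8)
The plan is to produce a single simplicial set whose simplicial chain complex is literally the Hochschild complex of $k[G]$ and whose geometric realisation is homotopy equivalent to $LX$; the transition between these two descriptions will use the same device as in the proof of Proposition \ref{hequivalence}, namely comparison of two fibrations over $X$ by the five lemma and Whitehead's theorem.

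First I would treat the algebraic side. Since $k[G]$ sits in degree zero its internal differential vanishes, so the Hochschild complex $C(k[G])$ is just the cyclic bar complex: in homological degree $n$ it is $k[G]^{\otimes(n+1)}=k[G^{n+1}]$, the free vector space on the set $G^{n+1}$, and its differential is $b(g_0,\dots,g_n)=\sum_{i=0}^{n-1}(-1)^i(g_0,\dots,g_ig_{i+1},\dots,g_n)+(-1)^n(g_ng_0,g_1,\dots,g_{n-1})$. Let $Y$ be the simplicial set with $Y_n=G^{n+1}$, with face maps $d_i(g_0,\dots,g_n)=(g_0,\dots,g_ig_{i+1},\dots,g_n)$ for $i<n$ and $d_n(g_0,\dots,g_n)=(g_ng_0,g_1,\dots,g_{n-1})$, and with degeneracies inserting the identity of $G$. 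Verifying the simplicial identities is routine; this $Y$ is the cyclic nerve (cyclic bar construction) of $G$, and by inspection $\Ch_*(Y)=C(k[G])$ as complexes. Hence, using the identification of simplicial homology with the homology of the realisation from \S\ref{ssets}, $\HH_d(k[G])\cong\Ho_d(\Ch_*(Y))\cong\Ho_d(|Y|)$.

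Next I would identify $|Y|$ with an equivariant model. Let $E(G)$ be the simplicial total space of the principal $G$-bundle over $B(G)$ from \S\ref{ssets}, so that $|E(G)|=EG$ is weakly contractible, and let $\widehat{G}$ denote the set $G$ regarded as a constant simplicial set with $G$ acting on it by conjugation. A direct computation with the standard model $E(G)_n=G^{n+1}$ yields an isomorphism of simplicial sets $Y\cong E(G)\times^{G}\widehat{G}$ (diagonal action, then quotient). Since $|\cdot|$ is a left adjoint it preserves products with, and quotients by, the discrete group $G$, so on realisations $|Y|\cong EG\times^{G}\widehat{G}$, the Borel construction for $G$ acting on itself by conjugation. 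It therefore remains to prove $EG\times^{G}\widehat{G}\simeq LX$ for $X=BG$. Both spaces carry natural maps to $X$: the bundle projection $p\colon EG\times^{G}\widehat{G}\to EG/G=X$, which is a fibre bundle with discrete fibre $G$ hence a fibration, and the evaluation at the basepoint $ev\colon LX\to X$, a fibration whose fibre is $\Omega X\simeq\pi_1(X)=G$, since $X$ is aspherical. I would build a map $\Psi\colon LX\to EG\times^{G}\widehat{G}$ over $X$ by pulling the covering $EG\to X$ back along $ev$ to a covering of $LX$ whose points are pairs $(\sigma,e)$ with $e\in EG$ lying over the basepoint of the loop $\sigma$, lifting $\sigma$ through $EG\to X$ starting at $e$ (a unique lift, $EG\to X$ being a covering), recording the deck transformation carrying $e$ to the endpoint of this lift, and passing to $G$-quotients; the natural $G$-equivariance of $(\sigma,e)\mapsto(e,\text{that deck element})$ is precisely equivariance for the conjugation action on the second factor. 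Because $\Psi$ is compatible with the two maps to $X$ and restricts over a basepoint to the canonical map $\Omega X\to G$, $\sigma\mapsto[\sigma]$ — a homotopy equivalence, every component of $\Omega X$ being contractible as $X$ is aspherical — the five lemma applied to the long exact sequences (\ref{hLES}) of $p$ and of $ev$, followed by Whitehead's theorem, shows that $\Psi$ is a homotopy equivalence. Chaining the identifications gives $\HH_d(k[G])\cong\Ho_d(LX)$.

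I expect the main obstacle to be this last step: constructing $\Psi$ honestly as a genuine continuous map rather than merely up to homotopy, and checking that its built-in equivariance forces exactly the conjugation action appearing in $\widehat{G}$. By comparison the isomorphism $Y\cong E(G)\times^{G}\widehat{G}$ is a bookkeeping exercise with the bar construction, and the first step amounts to matching sign and indexing conventions.
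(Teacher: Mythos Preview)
Your argument is correct, and it reaches the same intermediate object as the paper does---the cyclic nerve $Y$ of $G$---but by a genuinely different route on the topological side.  The paper identifies $Y$ as the simplicial mapping space $\Map(\Ss^1,B(G))$ by analysing the combinatorics of $\Ss^1\times\Delta^n$ (there are $n+1$ nondegenerate top simplices, a map to $B(G)$ is determined by edge labels, and this recovers $G^{n+1}$ with the Hochschild face maps); it then invokes the standard fact that, because $B(G)$ is fibrant, $|\Map(\Ss^1,B(G))|$ computes the free loop space of $|B(G)|$.  You instead recognise $Y$ as the simplicial Borel construction $E(G)\times^{G}\widehat{G}$ for the conjugation action, realise, and compare the resulting fibre bundle over $BG$ with the evaluation fibration $LX\to X$ via a holonomy map, finishing with the five lemma and Whitehead exactly as in the proof of Proposition~\ref{hequivalence}.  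The paper's route is shorter and purely simplicial, importing one nontrivial fact about mapping spaces with fibrant target; your route is more hands-on and makes the decomposition of $|Y|$ by conjugacy classes visible immediately (the paper extracts this separately in the Remark following the Proposition).  Your worry about constructing $\Psi$ honestly is not serious: the unique-path-lifting property of the covering $EG\to BG$ makes $(\sigma,e)\mapsto(e,\text{holonomy of }\sigma\text{ from }e)$ continuous and $G$-equivariant for exactly the conjugation action, as your own computation shows.
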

\begin{proof}
Given a pair of simplicial sets $Y,Z$ we define the simplicial set $\Map(Y,Z)$ as follows:  First, define $\Map(Y,Z)_n:=\Hom_{\SSet}(Y\times \Delta^n,Z)$.  Then, given a homomorphism $\textbf{n}\rightarrow \textbf{m}$ in $\Delta$, we obtain a morphism $Y\times\Delta^n\rightarrow Y\times\Delta^m$ of simplicial sets.  This makes $\Map(Y,Z)$ into a simplicial set.  
\smallbreak
It is a standard fact that the homology $\Ho_d(LX)$ can be calculated as the homology of the simplicial set $\Map(\Ss^1,B(G))$, where $\Ss^1$ is the simplicial set with one nondegenerate 0-simplex, and 1 nondegenerate 1-simplex (this follows from the fact that $B(G)$ is \textit{fibrant}, see \cite{GoeJar99}).  It follows from the definition of $\Ss^1$ that it has exactly $n+1$ $n$-simplices.  Likewise we deduce that $\Ss^1\times \Delta^n$ has exactly $n+1$ nondegenerate $(n+1)$-simplices, and the set of vertices for all of them is the same.  Now a map of simplicial sets to $B(G)$ is determined by its action on the 1-simplices, and so one can show that a morphism from $\Ss^1\times \Delta^n$ to $B(G)$ is determined by its action on exactly one simplex.
\smallbreak
We have drawn the realization of $\Ss^1\times \Delta^1$ in Figure \ref{s1cross}.  The double arrows indicate that we should identify the left and right edges.  In general these edges are replaced with $n$-simplices, and we obtain a unique morphism by giving a morphism of the $(n+1)$-simplex $A$ to $B(G)$.  This is determined by an element of $G^{n+1}$, since the 1-simplices of $B(G)$ are labelled by $G$.  Precisely, the entire morphism of simplicial sets from $\Ss^1\times \Delta^n$ is determined by its action on the edges that join the $i$th vertex to the $(i+1)$th vertex of $A$, for $i\leq n-1$.  This identifies the underlying graded vector space of $\Ch_*(\Map(\Ss^1,B(G)))$ with the underlying graded vector space of the Hochschild complex for $k[G]$.  It is easy to check that the differentials are identified too.
\end{proof}
\begin{figure}

\centering
\input{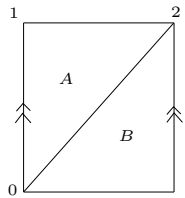}
\caption{The simplicial decomposition of $\Ss^1\times \Delta^1$}
\label{s1cross}
\end{figure}
\begin{rem}
For an acyclic manifold $M$ the space $LM$ breaks into path components labelled by conjugacy classes in $\pi_1(M)$.  The corresponding decomposition of $C(k[\pi_1(M)])$ is given as follows: each summand $k[\pi_1(M)]^{\otimes n}$ in the bar resolution for $k[\pi_1(M)]$ carries a natural $\pi_1(M)$-grading.  This in turn induces a decomposition of the Hochschild complex, labelled by conjugacy classes in $\pi_1(M)$.  From the above proof of Proposition \ref{loops}, we see that this is the same as the decomposition coming from the decomposition of $LM$ into path components.
\end{rem}
\section{Smooth algebras and superpotential algebras}
\subsection{Smooth algebras}
We recall a few notions from noncommutative geometry.  These are by now pretty standard, there are excellent (and comprehensive) references: (\cite{Ginz01}, \cite{Ginz}, \cite{CBEG}...).\smallbreak
Let $A$ be a unital differential graded algebra over $k$.  First we define the differential graded bimodule of 1-forms
\[
\Omega_k^1 A:=\Ker(m:A\otimes A\rightarrow A),
\]
a noncommutative analogue of the global vector fields on a scheme.\smallbreak 
In this section we present global deformed preprojective algebras (superpotential algebras).  These were first introduced in \cite{Ginz}.  In the formal case, constructions begin with a formal completion of a quiver algebra.  We think of this as a formal neighborhood in a noncommutative affine smooth scheme.  It is natural, then, in attempting to globalise the picture, to start with a `noncommutative smooth scheme', following the framework of \cite{KontRos00}, \cite{CBEG}, \cite{Ginz}.  
\begin{defn}[\cite{CunQui95}]
A finitely generated graded algebra $A$ is \textit{smooth} if the $A$-bimodule $\Omega^1_k A$ is projective.
\end{defn}
One of the central features of smooth algebras is that they satisfy a lifting property for nilpotent extensions.  This corresponds to smoothness under the representation functor of \cite{CBEG}, \cite{Ginz05}.
\begin{examp}
Let $A$ be the free path algebra of a quiver.  Then $A$ is smooth in the above sense.
\end{examp}
\begin{examp}
We can also form smooth algebras by localization.  For example, let $Q$ be a quiver, then we can localize $k[Q]$, the free path algebra, by inverting all the arrows.  The resulting algebra is smooth.  It follows that the fundamental group algebra of $\Ss^1$ is smooth.
\end{examp}
\begin{prop}
Let $A$ be an augmented differential graded algebra, complete at its augmentation ideal $\mathfrak{m}$.  Then $A$ is smooth if and only if the natural map
\[
\hat{T}(\mathfrak{m}/\mathfrak{m}^2)\rightarrow A
\]
from the completed free tensor algebra is an isomorphism.
\end{prop}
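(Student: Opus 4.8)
The plan is to compare $\hat T(\mathfrak m/\mathfrak m^2)$ with the associated graded $\mathrm{gr}_{\mathfrak m}A:=\bigoplus_j\mathfrak m^j/\mathfrak m^{j+1}$ of $A$, and to use the universal lifting property of smooth algebras to produce the inverse map. Write $V=\mathfrak m/\mathfrak m^2$ and let $\mathfrak n$ be the augmentation ideal of $\hat T(V)$, so that $\mathrm{gr}_{\mathfrak n}\hat T(V)=T(V)$. The natural map $c\colon\hat T(V)\to A$ is built by choosing a $k$-linear section $s\colon V\to\mathfrak m$ of the projection, extending it to the algebra map $T(V)\to A$ it determines, and completing; it is filtered, with $c(\mathfrak n^j)\subseteq\mathfrak m^j$ and $\mathrm{gr}_1(c)=\mathrm{id}_V$. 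Since each $\mathfrak m^j/\mathfrak m^{j+1}$ is spanned by $j$-fold products of elements of $\mathfrak m/\mathfrak m^2$, the algebra $\mathrm{gr}_{\mathfrak m}A$ is generated in degree $1$, so the algebra map $\mathrm{gr}(c)$, which is onto in degree $1$, is surjective; as $A$ is complete this forces $c$ to be surjective. Thus the whole content of the proposition is that $c$ is injective exactly when $A$ is smooth.

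One implication is immediate: if $c$ is an isomorphism then $\Omega^1_k A\cong\hat T(V)\,\hat\otimes\, V\,\hat\otimes\,\hat T(V)$, by the standard computation of the bimodule of differentials of a (completed) tensor algebra, and this is a free bimodule, hence projective, so $A$ is smooth.

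For the converse, assume $A$ is smooth and construct a filtered algebra map $\psi\colon A\to\hat T(V)$ as the inverse limit of a compatible family $\psi_n\colon A\to\hat T(V)/\mathfrak n^{n+1}$. Let $\psi_0$ be the augmentation and let $\psi_1\colon A\to k\oplus V$ be $a\mapsto(\epsilon(a),p(a))$ with $p\colon A\to\mathfrak m/\mathfrak m^2$ the projection; a short check (the Leibniz identity for $p$) shows $\psi_1$ is an algebra map, and clearly $\mathrm{gr}_1(\psi_1)=\mathrm{id}_V$. Given $\psi_n$, the surjection $\hat T(V)/\mathfrak n^{n+2}\twoheadrightarrow\hat T(V)/\mathfrak n^{n+1}$ has square-zero kernel $V^{\otimes(n+1)}$, so the lifting property of smooth algebras for nilpotent extensions, recalled above, lets us lift $\psi_n$ to $\psi_{n+1}$; put $\psi=\varprojlim\psi_n$. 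Then $\mathrm{gr}(\psi)\circ\mathrm{gr}(c)=\mathrm{gr}(\psi\circ c)$ is a graded endomorphism of $T(V)$ restricting to $\mathrm{id}$ on the degree-$1$ part, hence equal to $\mathrm{id}_{T(V)}$ since $T(V)$ is generated in degree $1$. So $\mathrm{gr}(c)$ has a left inverse and is injective; together with the surjectivity already shown, $\mathrm{gr}(c)$ is an isomorphism, and therefore so is $c$, as both filtered algebras are complete and separated.

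The single substantial point — and the only use of the smoothness hypothesis — is the inductive construction of $\psi$, which rests on the equivalence between $\Omega^1_k A$ being projective and $A$ satisfying the nilpotent lifting property (formal smoothness, or quasi-freeness in the sense of Cuntz--Quillen); in the complete setting this must be applied with completed tensor products throughout, which is where a little care is needed. One should also note that, since $A$ is a differential graded algebra, the argument is to be read on the underlying graded algebra: the section $s$ cannot in general be chosen to respect the differential, so the differential of $A$ is transported across $c$ only afterwards, and — as with superpotential algebras — the resulting differential on $\hat T(\mathfrak m/\mathfrak m^2)$ need not be the Leibniz extension of the differential on $\mathfrak m/\mathfrak m^2$.
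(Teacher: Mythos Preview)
The paper states this proposition without proof, presumably regarding it as a standard fact from the Cuntz--Quillen theory it cites. Your argument is the correct and standard one: surjectivity of $c$ via the associated graded, freeness of $\Omega^1$ for a completed tensor algebra for the easy direction, and an inductive construction of a splitting $\psi$ using the nilpotent lifting property for the hard direction, followed by the observation that $\mathrm{gr}(\psi\circ c)$ is a graded algebra endomorphism of $T(V)$ that is the identity in degree~$1$ and hence the identity everywhere.

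Two small points worth tightening. First, you should remark explicitly that each $\psi_n$ is a map of \emph{augmented} algebras: this follows by induction since the augmentation $\hat T(V)/\mathfrak n^{n+2}\to k$ factors through $\hat T(V)/\mathfrak n^{n+1}$, but it is what guarantees $\psi(\mathfrak m)\subseteq\mathfrak n$ and hence that $\psi$ is filtered, which you need in order to speak of $\mathrm{gr}(\psi)$. Second, the paper's definition of ``smooth'' literally asks for a \emph{finitely generated} algebra with projective $\Omega^1$, which sits slightly awkwardly with the completed setting here; your closing paragraph correctly flags that what is really being used is Cuntz--Quillen quasi-freeness (the lifting property), and it would be cleanest to say so up front rather than at the end. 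Your remark about the differential --- that the identification is at the level of the underlying graded algebra, with the differential on $\hat T(\mathfrak m/\mathfrak m^2)$ merely transported and not Leibniz-extended from degree one --- is exactly right and is a point the paper leaves implicit.
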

\begin{prop}
\label{glofor}
Let $A$ be a smooth augmented differential graded algebra.  Then $\hat{A}$, the algebra obtained by completing at the augmentation ideal, is smooth.
\end{prop}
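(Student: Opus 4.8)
The plan is to prove this via the formal lifting property against square-zero (equivalently, nilpotent) extensions --- the `central feature' of smooth algebras recorded above --- which behaves much better under completion than the bimodule $\Omega^1$ does directly. So I would first pin down the meaning of `smooth' for $\hat A$: being the completion of the finitely generated algebra $A$, it is topologically finitely generated and lives in the category of complete augmented differential graded algebras, where smoothness should mean that the completed bimodule of $1$-forms is projective; I will take as known (the differential graded, topological version of the Cuntz--Quillen picture; compare the preceding Proposition and the smoothness of completed free tensor algebras) that for such an algebra this is equivalent to the lifting property: every continuous, augmentation-preserving map into the base of a square-zero extension of complete augmented differential graded algebras lifts to the total algebra. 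Specialised to the tower of quotients $A/\mathfrak m^{n+1}\twoheadrightarrow A/\mathfrak m^{n}$, this is exactly the mechanism producing the isomorphism $\hat T(\mathfrak m/\mathfrak m^{2})\to\hat A$ appearing in the preceding Proposition.

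Granting this, let $\pi\colon R\twoheadrightarrow\bar R$ be such a square-zero extension (with kernel $I$, $I^{2}=0$) and let $f\colon\hat A\to\bar R$ be continuous and augmentation-preserving. Restricting along $A\to\hat A$ gives an honest algebra map $g\colon A\to\bar R$, which lifts to $\tilde g\colon A\to R$ with $\pi\tilde g=g$ because $A$ is smooth and $R\twoheadrightarrow\bar R$ is a nilpotent extension. Since $I\subseteq J$ for $J$ the augmentation ideal of $R$, and $g$ carries the augmentation ideal $\mathfrak m$ of $A$ into the augmentation ideal of $\bar R$, the lift satisfies $\tilde g(\mathfrak m)\subseteq J$, hence $\tilde g(\mathfrak m^{n})\subseteq J^{n}$. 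As $R$ is complete at $J$, the powers $J^{n}$ tend to $0$, so $\tilde g$ is continuous for the $\mathfrak m$-adic topology and extends uniquely to a continuous $\hat{\tilde g}\colon\hat A\to R$. Finally $\pi\circ\hat{\tilde g}$ and $f$ are continuous maps agreeing on the dense subalgebra $A$, so they are equal; thus $\hat{\tilde g}$ lifts $f$, and $\hat A$ is smooth.

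I expect the genuine obstacle to be entirely in the first paragraph: setting up the ambient category of complete augmented differential graded algebras, the right definition of smoothness there (projectivity of the completed $1$-forms, i.e.\ a pro-/topological projectivity), and the equivalence of that definition with the lifting property used above --- a routine-in-spirit but slightly delicate differential graded and topological elaboration of Cuntz--Quillen. Once that is in place the argument of the second paragraph is elementary. If one prefers to avoid the lifting property altogether, an alternative is to show directly that completion is compatible with $\Omega^{1}_{k}(-)=\Ker(m)$: using that $\hat A\,\widehat{\otimes}\,\hat A$ is the completion of $A\otimes A$ and an Artin--Rees/exactness-of-completion input for the relevant filtered bimodules, one identifies the completed $1$-forms of $\hat A$ with the base change $\hat A\,\widehat{\otimes}_{A}\,\Omega^{1}_{k}A\,\widehat{\otimes}_{A}\,\hat A$, and a decomposition of $\Omega^{1}_{k}A$ as a direct summand of a free $A^{e}$-module then base-changes to exhibit the completed $1$-forms of $\hat A$ as a summand of a completed free $\hat A^{e}$-module --- the Artin--Rees step being the technical heart there too.
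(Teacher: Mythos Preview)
The paper states Proposition~\ref{glofor} without proof, so there is nothing to compare against directly. Your approach via the nilpotent lifting property is the natural one and is almost certainly what the author had in mind as the one-line justification: smoothness in the sense of Cuntz--Quillen is equivalent to the lifting property against nilpotent extensions, and lifts along $A\to\hat A$ extend by continuity. The argument you give in your second paragraph is correct; in particular your observation that $J=\pi^{-1}(\bar J)$ (since the extension is in the augmented category) is exactly what makes $\tilde g(\mathfrak m)\subseteq J$ automatic without needing the lift itself to be augmentation-preserving.

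You are also right that the real content lies in your first paragraph: making precise what ``smooth'' means for $\hat A$ in the pseudocompact/pro setting and checking that the lifting characterisation still holds there. The paper is silent on this, simply treating it as understood (and indeed the preceding unnumbered proposition, identifying smooth complete augmented algebras with completed free tensor algebras, is the same circle of ideas stated from the other side). One small point worth flagging: the paper's Definition of smoothness is for the underlying \emph{graded} algebra, ignoring the differential, so the lifting problem you should be solving is for graded algebra maps rather than dg maps --- this actually makes life easier, since you need not worry about compatibility of the lift with $d$.
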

\smallbreak
\begin{examp}
\label{aff}
Let $A$ be a smooth algebra over $k$, and let $V$ be a finitely generated negatively graded $A$-bimodule.  Let $T_A(V)$ be the free unital associatative algebra generated by $V$.  Then we consider $T_A(V)$ as the noncommutative affinization of a sheaf on a noncommutative affine scheme.  In the case in which $V$ is free as a bimodule, we will call this a noncommutative vector bundle over $A$.
\end{examp}
\begin{prop}
Let $A$ be a smooth algebra, and let $V$ be a finitely generated free $V$-bimodule.  Then $T_A(V)$ is smooth.
\end{prop}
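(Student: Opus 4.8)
The plan is to check the two defining conditions of smoothness for $B:=T_A(V)$ directly: that $B$ is finitely generated as a graded $k$-algebra, and that $\Omega^1_k B$ is projective as a $B^e$-module. The first is immediate, since $B$ is generated over $k$ by a finite set of algebra generators of $A$ together with a finite set of $A^e$-module generators of $V$. The substance is the computation of $\Omega^1_k B$, and the key structural fact is that, because $V$ is \emph{free}, $B$ is a free product. Writing $V\cong\bigoplus_{i=1}^m (A\otimes A)[d_i]$ with free generators $\varepsilon_1,\dots,\varepsilon_m$, the universal property of the relative tensor algebra identifies $k$-algebra maps $T_A(V)\to R$ with pairs consisting of a $k$-algebra map $A\to R$ and an $A$-bimodule map $V\to R$; and, $V$ being free, the latter is nothing but a choice of $m$ elements of $R$ of the appropriate degrees, i.e. the datum of a $k$-algebra map out of the free algebra $F:=k\langle\varepsilon_1,\dots,\varepsilon_m\rangle$. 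Hence $B\cong A\ast F$, the coproduct of $A$ and $F$ in the category of (dg) $k$-algebras.

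With this in hand I would compute $\Omega^1_k B$ from its defining property $\Hom_{B^e}(\Omega^1_k B,M)\cong\Der_k(B,M)$, for $B$-bimodules $M$. Evaluating the coproduct property of $B\cong A\ast F$ on the square-zero extension $B\oplus M$ and restricting to homomorphisms lying over $\mathrm{id}_B$ yields a natural isomorphism $\Der_k(B,M)\cong\Der_k(A,M)\times\Der_k(F,M)$, where $M$ is regarded as a bimodule over $A$ and over $F$ via the canonical inclusions into $B$. Now $\Der_k(A,M)\cong\Hom_{A^e}(\Omega^1_k A,M)$ by the universal property of $\Omega^1_k A$, and by the adjunction between restriction and induction $N\mapsto B\otimes_A N\otimes_A B$ along $A^e\to B^e$ this is $\Hom_{B^e}(B\otimes_A\Omega^1_k A\otimes_A B,M)$; likewise $\Der_k(F,M)$ is $M^{\oplus m}$ up to shifts, hence $\Hom_{B^e}(B\otimes_A V\otimes_A B,M)$, as $B\otimes_A V\otimes_A B$ is a sum of $m$ shifted copies of $B^e$. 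By the Yoneda lemma,
\[
\Omega^1_k B\;\cong\;\bigl(B\otimes_A\Omega^1_k A\otimes_A B\bigr)\;\oplus\;\bigl(B\otimes_A V\otimes_A B\bigr)
\]
as $B^e$-modules.

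It remains to see that each summand is projective over $B^e$. The second, $B\otimes_A V\otimes_A B$, is free, being a finite sum of shifted copies of $B^e$. For the first: $A$ being smooth and finitely generated forces $\Omega^1_k A$ to be finitely generated and projective over $A^e$, hence a direct summand of a finite free $A^e$-module (up to shifts); applying the additive functor $B\otimes_A(-)\otimes_A B$, which carries $A^e$ to $B\otimes_k B=B^e$, exhibits $B\otimes_A\Omega^1_k A\otimes_A B$ as a direct summand of a finite free $B^e$-module, hence projective. A direct sum of projectives is projective, so $\Omega^1_k B$ is projective and $B=T_A(V)$ is smooth.

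Everything after the identification $B\cong A\ast F$ is formal — a Yoneda computation together with the fact that induction along $A^e\to B^e$ preserves projectivity — so the one genuinely substantive step is the observation that freeness of $V$ turns $T_A(V)$ into a free product; for merely projective $V$ one would instead have to split the transitivity sequence for $k\to A\to B$ and check injectivity of its left-hand map. The only bookkeeping to watch is the internal degree shifts $[d_i]$, which commute with all the functors used, and — if one wishes to respect the paper's derived-tensor conventions — the flatness of $B$ over $A$ on each side, which is clear since $B=\bigoplus_{j\ge0}V^{\otimes_A j}$ is even free over $A$ on each side.
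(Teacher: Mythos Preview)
Your argument is correct. The paper does not actually supply a proof of this proposition; it is recorded as a standard fact among the preparatory material on smooth algebras (alongside the references to \cite{CunQui95}, \cite{CBEG}, \cite{Ginz}), so there is nothing to compare against line by line.

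Your route via the identification $T_A(V)\cong A\ast F$ is clean and exploits exactly the freeness hypothesis on $V$ (note the typo in the statement: it should read ``free $A$-bimodule''). The derivation computation $\Der_k(A\ast F,M)\cong\Der_k(A,M)\times\Der_k(F,M)$ via square-zero extensions is correct, and the Yoneda step yielding $\Omega^1_k B\cong(B\otimes_A\Omega^1_k A\otimes_A B)\oplus(B\otimes_A V\otimes_A B)$ is the expected decomposition; it is also what one would obtain from the transitivity sequence for $k\to A\to B$, which here splits because $V$ is free. The projectivity check for each summand is straightforward as you say. Your closing remark about the merely-projective case and the side-flatness of $B$ over $A$ is accurate and shows you understand where freeness is used.
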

Given a complex of $A$-bimodules, we obtain a unique differential on the associated affinization, acting by derivations.  This is the form our noncommutative dg-schemes will take.
\begin{defn}
Let $(A,d)$ be a pair of a graded smooth unital differential graded algebra $A$ and a degree 1 differential $d$ on $A$, acting by derivations.  Then we call $(A,d)$ a \textit{semismooth} differential graded algebra if the underlying algebra of $A$ is a nonpositively graded vector bundle over a smooth algebra concentrated in degree zero.
\end{defn}
\begin{rem}
This turns out to be a weak requirement.  Indeed, as long as there is a finite free resolution of the diagonal bimodule of an ordinary algebra $B$, we can find a semismooth presentation, i.e. a semismooth $A$ and a quasi-isomorphism
\[
A\rightarrow B.
\]
\end{rem}
The principal reason for restricting to such presentations is that they give a good handle on the mixed complex $C(A)$.  To explain this, we introduce a different mixed complex $M(X(A))$ from \cite{CunQui95b}.  Let
\[
\lambda: \Omega_k^1 A\rightarrow A\otimes A
\]
be the natural bimodule map sending $Da$ to $a\otimes 1-1\otimes a$ (this is just the inclusion).  Then applying $A\otimes_{A^e} -$ to this map we obtain a two-term complex
\[
\sigma:(\Omega_k^1)_A\rightarrow A.
\]
We define a $B$-map going the other way by $B(a)=\overline{(a\otimes 1-1\otimes a)}$.  
\begin{prop}
\label{mixquas}
Let $A$ be a semismooth algebra.  Then the natural morphism of mixed complexes
\[
\theta: C(A)\rightarrow M(X(A))
\]
is a quasi-isomorphism.
\end{prop}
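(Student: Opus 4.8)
The plan is to reduce to the case of an ordinary smooth algebra concentrated in degree zero, where the statement is the theorem of Cuntz and Quillen identifying the $X$-complex, and then to bootstrap to the semismooth case by a spectral-sequence comparison organised by internal degree. Observe first that, with the conventions of this section, to say that $\theta$ is a quasi-isomorphism of mixed complexes is only to say that it induces an isomorphism on homology for the Hochschild differential; the compatibility of $\theta$ with the Connes differential $B$ is part of the construction of $\theta$, and in the degree-zero case is exactly the computation of \cite{CunQui95b}. So I would concentrate on Hochschild homology throughout.

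\emph{Step 1: ordinary smooth algebras.} Let $A=R$ be smooth and concentrated in degree zero. Then $\Omega^1_k R$ is a projective $R$-bimodule, so $0\to \Omega^1_k R\to R\otimes R\to R\to 0$ is a projective $R^{e}$-resolution of the diagonal bimodule of length one; applying $R\otimes_{R^{e}}-$ to it returns precisely the two-term complex $\sigma\colon (\Omega^1_k R)_R\to R$ underlying $M(X(R))$, which therefore computes $\HH_*(R)$. By its construction $\theta$ is the map induced by comparing the bar resolution with this length-one resolution (lifting along the latter using projectivity of $\Omega^1_k R$), hence it realises the identity under the two resulting identifications with $\HH_*(R)$ and is a quasi-isomorphism. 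For the full statement, including that $\theta$ is genuinely a morphism of mixed complexes, I would quote \cite{CunQui95b}.

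\emph{Step 2: the semismooth case.} Let $(A,d)$ be semismooth and let $A^{\flat}$ be its underlying graded algebra, obtained by forgetting $d$. By the definition of semismoothness $A^{\flat}$ is a nonpositively graded noncommutative vector bundle over a smooth degree-zero algebra, hence is itself a graded smooth algebra; Step 1 applies verbatim in the graded setting, so $\theta^{\flat}\colon C(A^{\flat})\to M(X(A^{\flat}))$ is a quasi-isomorphism. Now $C(A)$ and $M(X(A))$ are total complexes of double complexes, one differential being the internal differential $d$ and the other the Hochschild, resp.\ $\sigma$, differential; the latter preserves internal degree while $d$ raises it by one, and $\theta$ preserves internal degree. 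Filtering both sides by internal degree kills $d$ on the associated graded, which is thus $C(A^{\flat})$, resp.\ the two-term $\sigma$-complex underlying $M(X(A^{\flat}))$, with their internal gradings, and there $\theta$ induces $\theta^{\flat}$. Since $A$ is nonpositively graded this filtration is bounded above and finite in each total degree, so both spectral sequences converge; as $\theta$ gives an isomorphism on $E_{1}$ by Step 1, the comparison theorem shows it is a quasi-isomorphism, and therefore --- being a morphism of mixed complexes --- a quasi-isomorphism of mixed complexes.

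The principal difficulty is Step 1: the Cuntz--Quillen identification of the $X$-complex, and in particular the fact that $\theta$ intertwines the Connes differentials and not merely the Hochschild ones. Once that is available, the role of the semismoothness hypothesis is precisely to ensure that the underlying graded algebra is smooth and nonpositively graded, which is all that Step 2 requires. One should also take a little care over normalization (factoring through $\overline{C}(A)$ where convenient) and over the signs produced by the homological/cohomological grading switch flagged at the start of the paper.
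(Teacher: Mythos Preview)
Your proposal is correct and follows essentially the same two-step strategy as the paper: first establish the result for the underlying smooth algebra with zero differential using the length-one projective resolution $\Omega^1_k A\to A\otimes A\to A$, then pass to the general semismooth case by a filtration and spectral-sequence comparison. The only minor difference is the choice of filtration: you filter by internal (cohomological) degree, whereas the paper uses the filtration coming from the tensor-algebra structure $T_B(V)$ of the underlying algebra; both kill $d$ on the associated graded and reduce to Step~1, so the arguments are interchangeable.
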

The proof is as in \cite{VdB10}.  The case in which $A$ has zero differential follows via the definition of Hochschild homology, since
\[
\xymatrix{
\Omega_k^1 A\ar[r]&A\otimes A\ar[r]& A
}
\]
is a projective bimodule resolution of $A$.  Next one uses the filtration arising from the construction of the underlying algebra of $A$ as a tensor algebra, and  the (third quadrant) spectral sequence for filtered complexes.
\subsection{Noncommutative geometry}
Let $A$ be an arbitrary smooth differential graded algebra over $k$.  There is a canonical derivation 
\[
D:A\rightarrow \Omega_k^1 A
\]
given by $D(a)=a\otimes 1-1\otimes a$.  Given a differential graded algebra $A$ we define $\Der_k(A,M)$ to be the differential graded vector space of $k$-linear \textit{super}derivations from $A$ to $M$, and we define $\Der_k(A):=\Der_k(A,A)$.  The bimodule $\Omega_k^1 A$ has the universal property that there is a natural isomorphism
\[
\Der_k(A,M)\rightarrow \Hom_{A^e}(\Omega_k^1 A,M)
\]
taking $\vartheta\in \Der_k(A,M)$ to the homomorphism taking $aDb$ to $a\vartheta(b)$.  Next, define the differential graded algebra of noncommutative differential forms
\[
(\Omega^{\bullet}_k A,D)
\]
where $\Omega^{\bullet}_k A:=T_{A}(\Omega_k^1 A)$, and $D$ is the \textit{de Rham} differential above, which induces a unique differential on $\Omega^{\bullet}_k A$.  Next we define
\[
\DR_k(A):=T_A(\Omega_k^1 A)_{\cyc}.
\]
We define
\[
\DDer(A)=\Der_k(A,A\otimes A).
\]
There is a natural isomorphism $\DDer(A)\cong\Omega_l^1 A^{\vee}$, and so in particular $\DDer(A)$ is a $A$-bimodule.  This is called the bimodule of \textit{double derivations} of $A$.\smallbreak
Given a $\vartheta\in \Der_k(A)$, we obtain a derivation (\textit{Lie derivative}) $L_{\vartheta}$ on $\Omega_k^{\bullet} A$, defined on generators by
\begin{eqnarray*}
&L_{\vartheta}:a\rightarrow \vartheta(a)\\
&L_{\vartheta}:Da\rightarrow D(\vartheta(a)).
\end{eqnarray*}
We also have the contraction mapping $i_{\vartheta}$ defined by
\begin{eqnarray*}
&i_{\vartheta}:a\rightarrow 0\\
&i_{\vartheta}:Db\rightarrow \vartheta(b).
\end{eqnarray*}
Both of these maps descend to maps on the quotient $\DR_k(A)$.  If $\lambda\in\DDer_k(A)$ is a double derivation, we mimic the definition of the usual contraction mapping to obtain a double derivation on $(\Omega_k^{\bullet} A)$.  Next we define the \textit{reduced} contraction mapping
\[
\iota_{\lambda}:(\Omega_k^{\bullet} A)\rightarrow (\Omega_k^{\bullet} A)
\]
by setting 
\[
\iota_{\lambda}=m\beta(i_{\lambda}),
\]
where $m$ is the multiplication on $(\Omega_k^{\bullet} A)$ and $\beta$ is the swap morphism on $(\Omega_k^{\bullet} A)^{\otimes_k 2}$ (note that a sign appears here due to the usual Koszul sign rule).\smallbreak
We will be interested in a noncommutative version of \textit{symplectic} 2-forms.
\begin{defn}
Let $\omega\in \DR^2_k A$ be a \textit{closed} 2-form.  Then $\omega$ determines a map
\[
i^{\omega}:\Der_k(A)\rightarrow \DR^1_k(A)
\]
given by
\[
i^{\omega}(\theta)=i_{\theta}(\omega).
\] 
We say that $\omega$ is \textit{symplectic} if this map is an isomorphism.  Analogously, $\omega$ determines a map
\[
\iota^{\omega}:\DDer(A)\rightarrow \Omega_k^1 A,
\]
and we say that $\omega$ is \textit{bisymplectic} if this is an isomorphism.
\end{defn}
For a smooth ring, a 2-form is bisymplectic only if it is symplectic (\cite{CBEG}).
\begin{defn}
Given a symplectic or bisymplectic 2-form $\omega$, we define $\Der_{k,\omega}(A)$, the space of symplectic derivations, to be the subset of $\Der_k(A)$ consisting of 1-forms $\vartheta$ satisfying $L_{\vartheta}\omega=0$.
\end{defn}
\begin{prop}
\label{caris}
Let $\omega$ be a symplectic 2-form.  Then $i^{\omega}$ restricts to an isomorphism 
\[
\Der_{k,\omega}(A)\rightarrow \Ker(D:\DR^1_k(A)\rightarrow\DR^2_k(A)).
\] 
\end{prop}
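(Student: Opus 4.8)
The plan is to deduce the statement from the noncommutative analogue of Cartan's magic formula, exactly as one does in ordinary symplectic geometry.

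First I would establish, on the full algebra of noncommutative forms $\Omega^{\bullet}_k A=T_A(\Omega^1_k A)$, the identity
\[
L_{\vartheta}=D\,i_{\vartheta}+i_{\vartheta}\,D
\]
(with the usual Koszul sign modification when $\vartheta$ is odd). Both sides are superderivations of the same degree as $L_{\vartheta}$: this is built into the definition of $L_{\vartheta}$, while the right-hand side is the graded commutator $[D,i_{\vartheta}]$ of the two odd derivations $D$ (degree $+1$) and $i_{\vartheta}$ (form degree $-1$), hence again a derivation. Since $\Omega^{\bullet}_k A$ is generated as an algebra by $A$ in degree $0$ together with the exact one-forms $Da$, it is enough to verify the identity on those generators. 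On $a\in A$ one has $D\,i_{\vartheta}(a)+i_{\vartheta}\,D(a)=0+i_{\vartheta}(Da)=\vartheta(a)=L_{\vartheta}(a)$; on $Da$ one has $D\,i_{\vartheta}(Da)+i_{\vartheta}\,D(Da)=D(\vartheta(a))+0=L_{\vartheta}(Da)$, using $D^2=0$. The text records that $L_{\vartheta}$ and $i_{\vartheta}$ descend to the supercommutator quotient $\DR^{\bullet}_k(A)$, and $D$ evidently does too, so the identity holds there as well.

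Next I would apply this to the given closed symplectic form $\omega\in\DR^2_k A$, so that $D\omega=0$. Cartan's formula then gives
\[
L_{\vartheta}\,\omega=D\bigl(i_{\vartheta}\omega\bigr)\pm i_{\vartheta}(D\omega)=D\bigl(i^{\omega}(\vartheta)\bigr),
\]
since $D\omega=0$ kills the second term regardless of sign. Hence $\vartheta\in\Der_{k,\omega}(A)$, i.e.\ $L_{\vartheta}\omega=0$, if and only if $i^{\omega}(\vartheta)\in\Ker\bigl(D:\DR^1_k(A)\to\DR^2_k(A)\bigr)$. Because $\omega$ is symplectic, $i^{\omega}:\Der_k(A)\to\DR^1_k(A)$ is an isomorphism of vector spaces; restricting it to the subspace $\Der_{k,\omega}(A)$, which is precisely the preimage under the linear isomorphism $i^{\omega}$ of the subspace $\Ker(D:\DR^1_k(A)\to\DR^2_k(A))$, yields a linear isomorphism onto that kernel. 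This is exactly the claim.

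The only real content is the noncommutative Cartan identity; everything after it is formal. The point where I would take most care is the sign bookkeeping — checking that $D\,i_{\vartheta}+i_{\vartheta}\,D$ is genuinely a derivation of the correct degree, so that verification on algebra generators suffices, and that no spurious signs appear upon passing to the cyclic quotient $\DR^{\bullet}_k(A)$. Granting the standard noncommutative Cartan calculus as developed in \cite{Ginz} and \cite{CBEG}, these checks are routine.
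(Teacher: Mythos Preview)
Your argument is correct and is precisely the standard one: the noncommutative Cartan identity $L_{\vartheta}=D\,i_{\vartheta}+i_{\vartheta}\,D$ on $\DR^{\bullet}_k(A)$, applied to the closed form $\omega$, gives $L_{\vartheta}\omega=D(i^{\omega}(\vartheta))$, so the symplectic derivations are exactly the preimage of the closed $1$-forms under the isomorphism $i^{\omega}$. The paper itself states the proposition without proof, treating it as part of the standard noncommutative Cartan calculus imported from \cite{Ginz} and \cite{CBEG}; your writeup supplies exactly what those references contain, so there is nothing to compare.
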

Let $\omega\in \DR_k^2(A)$ be bisymplectic.  Then we define, as in \cite{CBEG},
\[
H_a:=(\iota^{\omega})^{-1}(Da)
\]
and
\[
\{a,b\}_{\omega}=m(H_a(b)).
\]
\subsection{Ginzburg's dg algebra}
We first remind the reader of the construction of the \textit{noncommutative moment map} from \cite{CBEG}.\smallbreak
We define the canonical double derivation $\Delta\in \DDer(A)$ by
\[
\Delta(a)=a\otimes 1-1\otimes a.
\]
Then (see \cite{CBEG}) there is a function
\[
\ncm:(\DR^2_k A)_{\mathrm{closed}}\rightarrow A/k
\]
satisfying the property that $D(\ncm(\omega))=\iota_{\Delta}\omega$.  Then we have the following proposition:
\begin{prop}[\cite{CBEG} Proposition 4.1.3]
\label{CBEGlem}
Let $\textbf{w}\in A$ be a representative of $\ncm(\omega)$.  Then $D\textbf{w}=0$ in $\DR^1_k(A)$.
\end{prop}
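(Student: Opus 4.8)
The plan is to reduce Proposition~\ref{CBEGlem} to the assertion that $\iota_\Delta\omega$ is a sum of supercommutators in $\Omega^1_kA$, so that it dies in the quotient $\DR^1_k(A)=(\Omega^1_kA)_{\cyc}$. First I would record that the defining relation $D(\ncm(\omega))=\iota_\Delta\omega$ is really an identity of honest $1$-forms: the de Rham differential $D\colon A\to\Omega^1_kA$ annihilates the scalars, so it descends to $A/k$, and for any representative $\textbf{w}\in A$ of $\ncm(\omega)\in A/k$ neither $D\textbf{w}$ nor the right-hand side depends on the choice of representative. Hence $D\textbf{w}=\iota_\Delta\omega$ already in $\Omega^1_kA$, and it is enough to show that the image of $\iota_\Delta\omega$ under the projection $\Omega^1_kA\twoheadrightarrow\DR^1_k(A)$ vanishes.

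For that I would compute directly. Since $A$ is smooth, every element of $\Omega^2_kA$ is a $k$-linear combination of elementary forms $a_0\,Da_1\,Da_2$ (using the identity $(Da)\,b=D(ab)-a\,Db$ to reduce), so by linearity it suffices to treat one such term. Unwinding $\iota_\Delta=m\beta(i_\Delta)$, where $i_\Delta$ is the degree $-1$ double derivation of $\Omega^\bullet_kA$ determined by $i_\Delta(Da)=\Delta(a)=a\otimes1-1\otimes a$, the Leibniz rule produces $\iota_\Delta(a_0\,Da_1\,Da_2)$ as a sum of four monomials, each of the form $u\cdot Da_i\cdot v$ with $u,v$ words in the $a_j$; after cyclically moving $v$ around the single differential these four terms cancel in two pairs, so $\iota_\Delta(a_0\,Da_1\,Da_2)\equiv0$ in $\DR^1_k(A)$. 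Summing over the monomials composing $\omega$ gives $\iota_\Delta\omega\equiv0$, as required. A slicker route to the same endpoint, quoting a little more from \cite{CBEG}, is that the reduced contraction with $\Delta$ annihilates $\DR^\bullet_k(A)$ altogether — morally because the map obtained from $\Delta$ by multiplying out, $a\mapsto m\beta(\Delta(a))=m(1\otimes a-a\otimes1)=0$, is zero.

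I expect the only real obstacle to be bookkeeping: keeping the Koszul signs in the Leibniz rule for $i_\Delta$ and in the flip $\beta$ consistent so that the four monomials genuinely pair off, and making sure $\iota_\Delta$ — a priori defined on $\Omega^\bullet_kA$ — is applied to an honest lift of the cyclic class $\omega$ (equivalently, checking that it descends to $\DR^\bullet_k(A)$, which is the natural home of the imported relation $D(\ncm(\omega))=\iota_\Delta\omega$). It is worth emphasising that closedness of $\omega$ plays no role in this argument; it is needed only to guarantee that $\iota_\Delta\omega$ is $D$-exact in $\Omega^1_kA$, hence that $\ncm(\omega)$ exists at all, which is part of what we are importing from \cite{CBEG}.
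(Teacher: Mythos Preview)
The paper does not supply its own proof of this proposition; it is quoted verbatim from \cite{CBEG} with no argument given. So there is nothing in the present paper to compare your proposal against.

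That said, your argument is correct and is essentially the standard one. The reduction $D\textbf{w}=\iota_\Delta\omega$ in $\Omega^1_kA$ followed by the verification that $\iota_\Delta$ vanishes modulo commutators is exactly how \cite{CBEG} proceeds (their Lemma~2.8.6(ii) records that $\iota_\Delta$ is zero on $\DR^\bullet_k(A)$, for precisely the reason you give in your ``slicker route''). Your explicit four-term computation on $a_0\,Da_1\,Da_2$ is fine: after applying $m\beta$ one obtains
\[
(Da_2)\,a_0a_1 \;-\; a_1(Da_2)\,a_0 \;-\; a_0(Da_1)\,a_2 \;+\; a_2a_0\,Da_1,
\]
and cyclically moving the scalar factors makes the first two terms cancel and the last two cancel, as you claim. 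Your caveats about signs and about working with a lift of $\omega$ to $\Omega^2_kA$ are the right ones to flag; neither causes trouble, since the computation shows $\iota_\Delta$ of \emph{any} element of $\Omega^2_kA$ lands in $[A,\Omega^1_kA]$, so the choice of lift is immaterial. Your closing remark that closedness of $\omega$ is used only for the \emph{existence} of $\ncm(\omega)$, not for the vanishing in $\DR^1$, is also accurate.
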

\begin{defn}
\label{conndef}
Let $A$ be a differential graded algebra.  Then we say $A$ is \textit{connected} if the following sequence is exact
\[
\xymatrix{
0\ar[r]& k\ar[r]& \DR_k^0 (A)\ar[r]&\DR_k^1(A).
}
\]
\end{defn}
Note that there is an identification $\DR_k^0(A)=A_{\cyc}$.  Let $A$ be a connected algebra.  
We deduce that the $\textbf{w}$ of Proposition \ref{CBEGlem} belongs to the vector space generated by $[A,A]$ and $k\subset A$.
\begin{rem}
This gives rise to a subtlety in 2 dimensions.  If $\omega$ has degree greater than zero, then there is a unique homogeneous representative $\textbf{w}$ of $\ncm(\omega)$, of the same degree as $\omega$, and furthermore it is an element of $[A,A]$.  If the degree of $\omega$ is zero (which will correspond to the 2-dimensional case) then this helpful fact no longer obtains.  This gives rise to two different definitions of a superpotential algebra in 2 dimensions, we choose the stronger one.  Note that if we do not demand that $\textbf{w}\in [A,A]$ then the proof of Theorem \ref{thm1} fails.

\end{rem}

\begin{prop}\cite{CBEG}
Let $A$ be a connected algebra.  Then there is a unique function
\[
\ncml:(\DR^2_k A)_{\mathrm{closed}}\rightarrow [A,A]
\]
such that, composing with the map $[A,A]\hookrightarrow A \twoheadrightarrow A/k$ we obtain $\ncm$.
\end{prop}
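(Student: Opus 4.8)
The plan is to combine Proposition \ref{CBEGlem}, which controls where a representative of $\ncm(\omega)$ can sit, with the connectedness hypothesis (Definition \ref{conndef}), which is precisely what is needed to split off the constant part canonically. Indeed the remark immediately following Proposition \ref{CBEGlem} already observes that the relevant $\mathbf{w}$ lies in the span of $[A,A]$ and $k$; the proposition just packages this observation into a function.

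First I would fix a closed $2$-form $\omega\in(\DR^2_k A)_{\mathrm{closed}}$ and choose any representative $\mathbf{w}\in A$ of $\ncm(\omega)\in A/k$. By Proposition \ref{CBEGlem} we have $D\mathbf{w}=0$ in $\DR^1_k(A)$. The de Rham differential $D\colon A\to\Omega^1_k A$ kills constants, and under the identification $\DR^0_k(A)=A_{\cyc}$ the induced map $A_{\cyc}\to\DR^1_k(A)$ is exactly $D$ followed by passage to the cyclic quotient of $\Omega^\bullet_k A$; so $D\mathbf{w}=0$ in $\DR^1_k(A)$ says precisely that the class $\bar{\mathbf{w}}\in A_{\cyc}$ lies in $\Ker\big(D\colon\DR^0_k(A)\to\DR^1_k(A)\big)$. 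Connectedness says this kernel is the image of $k$ in $A_{\cyc}$ and, moreover, that $k\to A_{\cyc}$ is injective, i.e.\ $k\cap[A,A]=0$ inside $A$. Hence $\mathbf{w}\in k+[A,A]$, and the trivial intersection makes the expression $\mathbf{w}=c+w'$ with $c\in k$ and $w'\in[A,A]$ unique; I would set $\ncml(\omega):=w'$.

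Next I would check well-definedness and the stated compatibility. The representative $\mathbf{w}$ is determined only up to adding an element of $k$, and since $k$ and $[A,A]$ meet trivially, changing $\mathbf{w}$ by a constant leaves the $[A,A]$-component unchanged, so $\ncml(\omega)\in[A,A]$ is well defined. Its image under $[A,A]\hookrightarrow A\twoheadrightarrow A/k$ is the class of $w'=\mathbf{w}-c$, which is the class of $\mathbf{w}$, namely $\ncm(\omega)$, as required. For uniqueness of $\ncml$ as a function: if $\ncml'$ is another lift, then for each $\omega$ the element $\ncml'(\omega)-\ncml(\omega)$ lies in $[A,A]$ and maps to $0$ in $A/k$, hence lies in $[A,A]\cap k=0$.

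I do not expect a genuine obstacle: the whole content is the translation between the hypothesis of Definition \ref{conndef} and the conclusion of Proposition \ref{CBEGlem}. The only point deserving a line of care is the identification of "$D\mathbf{w}=0$ in $\DR^1_k(A)$" with "$\bar{\mathbf{w}}\in\Ker\big(D\colon A_{\cyc}\to\DR^1_k(A)\big)$", which is immediate once one recalls that $\DR^\bullet_k(A)$ is the supercommutator quotient of $\Omega^\bullet_k A$ and that $\DR^0_k(A)=A_{\cyc}$.
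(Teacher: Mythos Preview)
Your argument is correct and is exactly the approach the paper indicates: the paper does not write out a proof but simply observes (just before the proposition) that connectedness forces $\mathbf{w}\in[A,A]+k$, and your write-up supplies the routine verification that this yields a well-defined and unique $[A,A]$-valued lift. Nothing is missing.
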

\begin{rem}
Again we see that this is rather trivial in case the degree of $\omega$ is nonzero -- just pick $\ncml(\omega)$ to be the homogeneous part of $\ncm(\omega)$ that has nonzero degree.
\end{rem}
We come now to the main definition/construction of this section.  Let $(A,\omega, \xi)$ be a triple, where $A$ is a connected nonpositively graded noncommutative vector bundle over a smooth algebra concentrated in degree zero, $\omega$ is a bisymplectic 2-form, homogeneous with respect to the induced grading from $A$, and $\xi\in\Der(A)$ satisfies $L_{\xi}\omega=0$ and $\xi^2=0$, and has cohomological degree 1.  Then following \cite{Ginz} we define $\mathfrak{D}(\omega,\xi)$, the \textit{GDGA} associated to this data, as follows: we let the underlying algebra of $\mathfrak{D}(\omega,\xi)$ be $A\ast_k k[t]$, where $t$ is placed in degree $c-1$, and $c$ is the degree of $\omega$ with respect to the grading induced by $A$.  We produce a differential on $A$ by setting 
\[
da=\xi(a).
\]
We extend this to a derivation on $\mathfrak{D}(\omega,\xi)$ by setting
\[
dt=\ncml(\omega).
\]
Our conditions on $\xi$ guarantee that $d^2=0$.
\begin{defn}
We say that an algebra $B$ is a \textit{superpotential algebra} if it is quasi-isomorphic to a semismooth algebra $A$ given by the above construction, for $\xi=\{W,\hbox{ }\}$, with $W\in A_{\cyc}$.  This is strictly stronger than the notion of GDGA just defined.
\end{defn}
\begin{examp}
\label{spheres}
Let $k=\mathbb{Q}$, and let $A=k$.  Clearly $A$ is a connected smooth algebra.  Then we can consider the 2-form $\omega=0$ as an element of the $(2-n)$th (cohomologically) graded piece of $\DR_k^2(A)$.  It is closed, and $\DDer(A)=\Omega_k^1(A)=0$, so we can apply the above construction, with $W=0$.  We obtain $k\langle x[1-n]\rangle$, which is quasi-isomorphic (as an $A_{\infty}$-algebra) to $\Ch_*(\Omega \Ss^{n})$.  This follows from the formality of $\Ch^*(\Ss^{n})$, and usual Koszul duality between symmetric and exterior algebras.  In this paper we will only be interested in superpotential algebras concentrated in degree zero, but one may just as well consider more general differential graded algebras as superpotential algebras, if they are given by the same construction; this example shows that $\Ch_*(\Omega \Ss^{n})$ is a superpotential algebra in this sense.  It is straightforward to show directly that $k\langle x[1-n] \rangle$ is a Calabi-Yau algebra of dimension $n$.
\end{examp}
\begin{examp}
\label{3-torus}
Let $k=\mathbb{Q}$ again, and let 
\[
A=\mathbb{Q}\langle x^{\pm 1}, y^{\pm 1}, z^{\pm 1}, x^*[1],y^*[1],z^*[1]\rangle.
\]
This algebra is smooth, as it is the localization of a free quiver algebra, and it is connected.  We let 
\[
\omega=\frac{1}{2}(DxDx^*+DyDy^*+DzDz^*).
\]
Finally, let $W=xyz-xzy$.  Then the resulting superpotential algebra is quasi-isomorphic to $\mathbb{Q}[\pi_1((\Ss^1)^3)]$, the fundamental group algebra of a 3-torus.
\end{examp}
\begin{examp}
Let $(A,d)$ be a superpotential algebra, with $A$ formed by taking the total space of a noncommutative vector bundle $V$ over a smooth algebra $B$.  Let $\mathfrak{m}$ be a two-sided ideal of $B$, then we let $\hat{B}$ denote the completion of $B$ along $\mathfrak{m}$.  By tensoring $V$ with the completion we obtain a new vector bundle over $\hat{B}$, and indeed a new superpotential description at the formal neighborhood of $\mathfrak{m}$.  In the 3-dimensional case, in which the degree of $\omega$ is -1, we obtain, under the representation functor, a new description of the formal neighborhood defined by $\mathfrak{m}$ as a critical locus, which is the same as the restriction of the global description of the representation stack of the superpotential algebra, as a critical locus, to the formal neighborhood.
\end{examp}
We will be concerned with counterexamples to the following conjecture, which is weaker than the analogous Conjecture 6.2.1 in \cite{Ginz}, when restricted to 3 dimensions.
\begin{conj}
\label{mfldconj}
If $M$ is an acyclic oriented compact manifold of dimension $d$, then $k[\pi_1(M)]$ is a superpotential algebra.
\end{conj}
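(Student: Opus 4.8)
The plan is not to prove Conjecture~\ref{mfldconj} but to \emph{disprove} it: I expect it to be false, and in fact to fail for every hyperbolic manifold of dimension at least $2$. The engine of the disproof is Keller's notion of an \emph{exact} Calabi-Yau algebra. Granting the fact (to be established in Section~4) that every superpotential algebra is exact Calabi-Yau, and the fact (Section~5) that $k[\pi_1(M)]$ is Calabi-Yau of dimension $d=\dim M$, it suffices to produce an acyclic oriented compact manifold $M$ for which $k[\pi_1(M)]$ is \emph{not} exact Calabi-Yau. So everything reduces to controlling the image of the boundary map
\[
B\colon \HC_{d-1}(k[\pi_1(M)])\longrightarrow \HH_d(k[\pi_1(M)])
\]
and showing it contains no nondegenerate class.

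First I would locate the canonical nondegenerate class and show it is never in $\Image(B)$. Write $A=k[\pi_1(M)]$. By Proposition~\ref{loops} together with the translation of the Connes long exact sequence into the $\Ss^1$-equivariant homology long exact sequence of $LM$ (as in \cite{Jones87}), both $\HH_*(A)$ and $\HC_*(A)$, and $B$ between them, split along the decomposition of $LM$ into path components indexed by the conjugacy classes of $\pi_1(M)$. On the component $(LM)_0$ of constant loops, Proposition~\ref{hequivalence} identifies $(LM)_0$ $\Ss^1$-equivariantly with $M$ carrying the trivial action; there the equivariant homology sequence is the Gysin sequence of the circle bundle $\Ss^1\to \Ss^{\infty}\times M\to \Ss^{\infty}\times^{\Ss^1} M$, which (the action being trivial) is pulled back from $\Ss^{\infty}\to B\Ss^1$, so the map $\Ho^{\Ss^1}_n(M)\to\Ho^{\Ss^1}_{n-2}(M)$ (cap product with the pulled-back Euler class) is surjective and $B$ vanishes on the constant-loop summand. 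But the orientation class, viewed in $\HH_d(A)\cong\Ho_d(LM)$ via $\Ho_d(M)\xrightarrow{\sim}\Ho_d((LM)_0)\hookrightarrow\Ho_d(LM)$, is precisely the Calabi-Yau class of Section~5 (Poincar\'{e} duality on $M$), it is nondegenerate, and it is supported on $(LM)_0$; hence it is not in $\Image(B)$.

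Second I would rule out any \emph{other} nondegenerate class lying in $\Image(B)$. If $\eta,\eta'\in\HH_d(A)$ are both nondegenerate, then $(\eta')^+\circ(\eta^+)^{-1}$ is an automorphism of $A$ in the derived category of bimodules, that is, multiplication by a central unit of $A$; so, up to the relevant equivalence, the nondegenerate classes form a torsor under $Z(A)^{\times}$, and in particular every nondegenerate class is a nonzero scalar multiple of the canonical one. When $M$ is hyperbolic of dimension $\geq 2$, the group $\pi_1(M)$ is torsion-free with trivial centre and every nontrivial element has an infinite conjugacy class, so $Z(A)=k$ and $Z(A)^{\times}=k^{\times}$. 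Since $\Image(B)$ is a linear subspace not containing the canonical class, it therefore contains no nondegenerate class, $A$ is not exact Calabi-Yau, and hence not a superpotential algebra. As hyperbolic manifolds exist in every dimension $\geq 2$, this refutes the conjecture in all such dimensions.

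The step I expect to be the main obstacle is the first: pinning down exactly which topological map $B$ corresponds to, verifying that it respects the conjugacy-class decomposition of $LM$, and honestly computing it on the constant-loop component. This requires a careful comparison of the algebraic Connes differential (with the homological/cohomological degree shift flagged in Section~2) with the connecting map of the equivariant homology sequence of $LM$, followed by the Gysin computation on $(LM)_0\simeq M$ via Proposition~\ref{hequivalence}. A subsidiary point needing care is the group-theoretic input that the group algebra of a hyperbolic fundamental group has centre exactly $k$ --- standard, but it should be cited properly; and it is exactly here that the hypothesis $\dim M\geq 2$ enters, excluding $M=\Ss^1$, whose fundamental group algebra $k[t^{\pm1}]$ carries the abundant central units $t^{n}$ and is in fact a superpotential algebra.
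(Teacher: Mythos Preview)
Your proposal is correct and follows essentially the same route as the paper: superpotential $\Rightarrow$ exact Calabi--Yau (Theorem~\ref{thm1}), the canonical nondegenerate class is $\const_*([M])$ (Proposition~\ref{CYthm}), this class is not in $\Image(B)$ by analysing the constant-loop component via Proposition~\ref{hequivalence} (Theorem~\ref{biggun}), any other nondegenerate class differs by a central unit, and for hyperbolic $M$ of dimension $\geq 2$ the centre of $k[\pi_1(M)]$ is $k$ (Corollary~\ref{hypform}). The only noticeable variation is your argument for $B=0$ on $(LM)_0$: you use surjectivity of the cap-with-Euler-class map in the Gysin sequence for the trivial action, whereas the paper argues via a support containment for the composite $\Gamma\colon \Ho_{d-1}(M\times\Cp\mathbb{P}^\infty)\to\Ho_d(M)$; both yield the same conclusion.
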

We next consider a weakening of the construction.  Let $A$ be now a nonpositively graded noncommutative vector bundle over a smooth connected algebra concentrated in degree zero, and let $\xi$ be a derivation of $A$ satsifying $\xi^2=0$.  Following \cite{Ginz}, \cite{CBEG} we say that a closed 2-form $\omega\in \DR^2_k(A)$ is \textit{homologically symplectic} if the map
\[
\iota_{\omega}:\DDer_k(A)\rightarrow \Omega_k^1 A
\]
is a quasi-isomorphism with respect to the differentials $L_{\xi}$ (we assume also that $L_{\xi}\omega=0$).  Given such data we proceed as before to build an algebra $\mathfrak{D}(\omega,\xi)$.
\begin{defn}
An algebra that is quasi-isomorphic to one given by this weaker construction will be called a homological GDGA.
\end{defn}
We come now to the main theorems regarding GDGAs.
\begin{thm}\cite{Ginz}
\label{Gthm}
Let $B$ be an algebra such that there exists a triple $(A,\omega,\xi)$ giving rise to the GDGA $\mathfrak{D}(\omega,\xi)$, and a quasi-isomorphism $\mathfrak{D}(\omega,\xi)\rightarrow B$.  Let $\omega$ have cohomological degree $-n$. Then $B$ is a Calabi-Yau algebra of dimension $n+2$.
\end{thm}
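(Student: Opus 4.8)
The plan is to reduce the statement to the case $B=\mathfrak{D}(\omega,\xi)$ itself and then to exhibit an explicit, finite, \emph{self-dual} projective resolution of the diagonal bimodule. First I would record that being Calabi-Yau of dimension $d$ is invariant under quasi-isomorphism of unital differential graded algebras -- it is a derived-Morita-invariant notion -- the point being that a quasi-isomorphism $A\to B$ induces one $A^e\to B^e$, under which both perfection over the enveloping algebra and the self-dual dualising isomorphism $A^\vee[d]\to A$ transport (using the natural quasi-isomorphism $\RHom_{A^e}(M,N)\xrightarrow{\sim}M^\vee\otimes_{A^e}N$ and base change). So it is enough to show that $\mathfrak{D}:=\mathfrak{D}(\omega,\xi)$ is Calabi-Yau of dimension $n+2$.

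Next I would construct a finite complex $P_\bullet$ of finitely generated projective $\mathfrak{D}$-bimodules with $P_\bullet\xrightarrow{\sim}\mathfrak{D}$. The underlying algebra of $\mathfrak{D}$ is $A\ast_k k[t]$ with $A$ smooth and $t$ in degree $-n-1$, so I would begin with the smoothness resolution $0\to\Omega^1_k A\xrightarrow{\lambda}A\otimes A\xrightarrow{m}A\to 0$, induce it up along $A\hookrightarrow\mathfrak{D}$ by applying $\mathfrak{D}\otimes_A-\otimes_A\mathfrak{D}$, and amalgamate with the standard length-one resolution of the diagonal $k[t]$-bimodule: the generator $t$ contributes an extra copy of $\mathfrak{D}\otimes\mathfrak{D}$ (from $Dt$) to the ``forms'' part, and the relation $dt=\ncml(\omega)$ contributes another to the ``relations'' part. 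Using the identification $\DDer(A)\cong(\Omega^1_k A)^\vee$ this yields a complex whose terms are, reading away from $\mathfrak{D}$, $\;\mathfrak{D}\otimes\mathfrak{D}\;$, then $\;(\mathfrak{D}\otimes_A\Omega^1_k A\otimes_A\mathfrak{D})\oplus(\mathfrak{D}\otimes\mathfrak{D})\;$, then $\;(\mathfrak{D}\otimes_A\DDer(A)\otimes_A\mathfrak{D})\oplus(\mathfrak{D}\otimes\mathfrak{D})\;$, then $\;\mathfrak{D}\otimes\mathfrak{D}$, with differentials assembled from $\lambda$, the de Rham differential $D$, and contraction with the canonical double derivation $\Delta$. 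That two consecutive differentials compose to zero uses precisely Proposition \ref{CBEGlem} (the identity $D\mathbf{w}=0$ for a representative $\mathbf{w}$ of $\ncml(\omega)$). Exactness I would verify by the device already used for Proposition \ref{mixquas}: filter by the tensor-algebra (equivalently augmentation-ideal) filtration and run the associated spectral sequence to reduce to the case in which $A$ carries the zero differential, where exactness of the Koszul-type complex is a direct check.

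Then I would dualise. Since every term of $P_\bullet$ is finitely generated projective, $-^\vee=\Hom_{\mathfrak{D}^e}(-,\mathfrak{D}\otimes\mathfrak{D})$ is exact on $P_\bullet$, so $\RHom_{\mathfrak{D}^e}(\mathfrak{D},\mathfrak{D}^e)$ is computed by $P_\bullet^\vee$. The identities $(\mathfrak{D}\otimes\mathfrak{D})^\vee\cong\mathfrak{D}\otimes\mathfrak{D}$ and $(\mathfrak{D}\otimes_A\Omega^1_k A\otimes_A\mathfrak{D})^\vee\cong\mathfrak{D}\otimes_A\DDer(A)\otimes_A\mathfrak{D}$ show $P_\bullet^\vee$ has the same terms as $P_\bullet$ read in reverse; the bisymplectic isomorphism $\DDer(A)\xrightarrow{\sim}\Omega^1_k A$ then upgrades this termwise matching to an isomorphism of complexes $P_\bullet^\vee\cong P_\bullet[n+2]$, the shift being $2$ from the length of the forms part of the resolution together with $n$ from the internal degree $-n$ of $\omega$ (this is exactly where the degree of $t$ enters). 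The hypotheses that $\omega$ is closed and that $L_\xi\omega=0$ are what force the bisymplectic pairing to commute with the differentials, so the matching is a genuine chain isomorphism; this produces a nondegenerate class $\eta\in\HH_{n+2}(\mathfrak{D})$, which is self-dual automatically by Lemma \ref{trivflip}. Transporting back along $\mathfrak{D}\to B$ finishes the argument.

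I expect the second step to be the main obstacle: writing down $P_\bullet$ with the correct signs and internal degrees, checking $d^2=0$ via the moment-map identity, and -- for the Calabi-Yau conclusion specifically -- verifying carefully that $P_\bullet^\vee$ and $P_\bullet[n+2]$ are isomorphic \emph{as complexes} of bimodules, not merely term by term, since that chain-level compatibility of the bisymplectic form with the differential is the whole content of the self-duality.
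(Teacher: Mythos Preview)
The paper does not actually prove this theorem: it is stated with the citation \cite{Ginz} and no proof is given, the result being attributed to Ginzburg. The only trace of the argument in the present paper is in the proof of Theorem~\ref{thm1}, where it is remarked that the class $(Dt,0)\in M(X(\mathfrak{D}))$ ``is just the nondegenerate class appearing in the proof in \cite{Ginz} that $\mathfrak{D}$ is Calabi--Yau.'' So there is nothing in this paper to compare your proposal against beyond that one sentence.

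That said, your outline is a faithful reconstruction of Ginzburg's argument and is consistent with the hint the paper gives: the short bimodule resolution you build (coming from the smoothness resolution of $A$ together with the extra generator $t$ and the relation $dt=\ncml(\omega)$) is exactly the one whose self-duality is witnessed by the bisymplectic isomorphism $\iota^\omega:\DDer(A)\xrightarrow{\sim}\Omega^1_kA$, and the resulting nondegenerate Hochschild class is the image of $Dt$ under the quasi-isomorphism of Proposition~\ref{mixquas}. Your identification of the delicate point --- that closedness of $\omega$ and $L_\xi\omega=0$ are what make the termwise matching a chain map --- is correct, and your degree bookkeeping (two from the length of the resolution, $n$ from the internal degree of $\omega$/$t$) is the right heuristic, though in an actual write-up you would want to track the internal grading of the summand $\mathfrak{D}\otimes\mathfrak{D}$ coming from $Dt$ (it sits in internal degree $-n-1$) rather than phrase the shift purely homologically.
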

\begin{thm}
\label{thm1}
Let $B$ be as above.  Then in fact $B$ is exact Calabi-Yau of dimension $n+2$.
\end{thm}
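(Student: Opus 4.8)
\medbreak

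The plan is to carry out all the homological computations with the semismooth model $\mathfrak D:=\mathfrak D(\omega,\xi)$ directly. First I would record that $\mathfrak D$ is semismooth: its underlying algebra $A\ast_k k[t]$ is again a nonpositively graded noncommutative vector bundle over the smooth degree-zero algebra underlying $A$, since $t$ lives in the negative degree $c-1$ and a free product of such a bundle with $k[t]$ is of the same type. As the mixed complex is a quasi-isomorphism invariant of dg algebras, the quasi-isomorphism $\mathfrak D\rightarrow B$ identifies $\HH_\bullet$, $\HC_\bullet$ and the Connes boundary map of $\mathfrak D$ with those of $B$, and carries nondegenerate classes to nondegenerate classes. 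Since self-duality of a nondegenerate class is automatic by Lemma~\ref{trivflip}, it suffices to exhibit a nondegenerate $\eta\in\HH_{n+2}(\mathfrak D)$ lying in the image of $B:\HC_{n+1}(\mathfrak D)\rightarrow\HH_{n+2}(\mathfrak D)$.

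The second step is to pass to the small model: by Proposition~\ref{mixquas} the mixed complex of $\mathfrak D$ is quasi-isomorphic to $M(X(\mathfrak D))$, whose Hochschild complex is the two-term complex $\sigma:(\Omega^1_k\mathfrak D)_{\mathfrak D}\rightarrow\mathfrak D$ (carrying also the internal differential of $\mathfrak D$) and whose Connes differential is $a\mapsto\overline{Da}$, landing in $(\Omega^1_k\mathfrak D)_{\mathfrak D}$. In this model the extra generator $t$, which sits in degree $c-1=-n-1$, is the natural witness for exactness. Indeed $dt=\ncml(\omega)$ lies in $[\mathfrak D,\mathfrak D]=\operatorname{im}(\sigma)$ by construction, so $t$ becomes a cyclic cycle once corrected by a preimage of $\ncml(\omega)$ under $\sigma$ together with higher-weight terms; the existence of these correction terms uses $d\bigl(\ncml(\omega)\bigr)=d^2t=0$. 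This is exactly the place where one needs $\ncml(\omega)\in[\mathfrak D,\mathfrak D]$ rather than only $\ncm(\omega)\in A/k$, as foreseen in the remark following Proposition~\ref{CBEGlem}. Write $\zeta\in\HC_{n+1}(\mathfrak D)$ for the resulting cyclic class. Because $M(X(\mathfrak D))$ has no piece of weight $\geq 2$, the Connes differential kills all the correction terms, so $B(\zeta)$ is represented by $\overline{Dt}\in(\Omega^1_k\mathfrak D)_{\mathfrak D}$, an element of the top internal degree and of total Hochschild degree $n+2$.

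The heart of the argument is to identify $\eta:=[\overline{Dt}]$ with the nondegenerate class underlying the Calabi--Yau structure produced in Theorem~\ref{Gthm}. For this I would unwind the proof of that theorem, namely the construction of the self-dual bimodule resolution of $\mathfrak D$ from the bisymplectic form $\omega$, and compute the Hochschild class corresponding to the resulting isomorphism $\mathfrak D^\vee[n+2]\xrightarrow{\sim}\mathfrak D$ after transport along the comparison quasi-isomorphism to $M(X(\mathfrak D))$. The defining relation $D\bigl(\ncm(\omega)\bigr)=\iota_\Delta\omega$ of the noncommutative moment map, together with $dt=\ncml(\omega)$, is what pins the representative down to $\overline{Dt}$ up to a coboundary; this is the global, non-complete counterpart of Van den Bergh's computation for formal Ginzburg algebras in \cite{VdB10}, with Proposition~\ref{mixquas} standing in for his reduction to the $X$-complex. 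Granting this, $\eta=\overline{Dt}=B(\zeta)$ is both nondegenerate and exact, so $\mathfrak D$, and hence $B$, is exact Calabi--Yau of dimension $n+2$.

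I expect the main obstacle to be precisely the identification in the third step: matching the abstractly defined Calabi--Yau class of Theorem~\ref{Gthm} with the concrete representative $\overline{Dt}$ requires an explicit hold on Ginzburg's resolution, careful tracking of the quasi-isomorphism of Proposition~\ref{mixquas}, and attention to the signs produced by the passage between homological and cohomological gradings. A secondary, essentially bookkeeping, point is to check that the correction terms turning $t$ into a cyclic cycle exist and that the class $\zeta$ does not depend on the choices made.
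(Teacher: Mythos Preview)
Your proposal is correct and takes essentially the same approach as the paper's own proof. The paper is considerably terser: it names the cyclic class simply as the pair $(\epsilon,t)$ in $M(X(\mathfrak D))$ with $\sigma(\epsilon)=dt$, observes that $B$ sends it to $(Dt,0)$, and then refers back to Ginzburg's proof of Theorem~\ref{Gthm} for the fact that $Dt$ is the nondegenerate Calabi--Yau class. In particular, the identification you single out as ``the main obstacle'' is handled in the paper by citation rather than by re-deriving Ginzburg's resolution, and the paper does not explicitly discuss the possible higher-column correction terms you mention (it treats $(\epsilon,t)$ directly as a representative).
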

\begin{proof}
We may calculate the cyclic and Hochschild homologies of $B$ by considering $\mathfrak{D}:=\mathfrak{D}(\omega,W)$, which is semismooth by definition.  It follows that we may calculate these homologies, and their connecting maps, via the mixed complex $M(X(\mathfrak{D}))$.
\smallbreak
By construction, $dt\in [\mathfrak{D},\mathfrak{D}]$.  It follows that there is some $\epsilon\in (\Omega_k^1\mathfrak{D})_\mathfrak{D}$ such that $\sigma\epsilon=dt$, and so $(\epsilon,t)$ is a class in $\HC_{n+1}(\mathfrak{D})$.  Under the map 
\[
B:M(X(\mathfrak{D}))\rightarrow M(X(\mathfrak{D})),
\]
$(\epsilon,t)$ is sent to $(Dt,0)$.  This is just the nondegenerate class appearing in the proof in \cite{Ginz} that $\mathfrak{D}$ is Calabi-Yau.
\end{proof}
\begin{rem}
\label{con2}
The second part of Theorem 3.6.4 of \cite{Ginz} concerns a partial inverse to Theorem \ref{Gthm}.  Namely, the result states that if $A$ satisfies also the condition that kernels of maps between finitely generated free bimodules are finitely generated (this is called the \textit{friendliness} condition), then the Calabi-Yau condition implies that we may find a homological GDGA algebra presentation for it.  A sketch proof is provided in dimensions 3 and higher.  It is conjectured in \cite{Ginz} that we can improve on this, and find a GDGA description (at least in dimension greater than 2).  The present paper suggests that this conjecture is false, with counterexamples coming from fundamental group algebras of acyclic manifolds, but there are some technical issues that one must face here.  The principal difficulty is to show that our algebras are friendly - this appears to be yet another difficult problem regarding group algebras.
\end{rem}
\section{Fundamental group algebras of manifolds}
\subsection{Formal neighborhood of the trivial module}
This subsection is not strictly necessary to the logical flow of the paper.  We include it for two reasons.  Firstly, to give a little feel for the noncommutative geometry of fundamental group algebras, and secondly, to make clear the contact with \cite{VdB10} and \cite{Seg08}.  Given the status of this subsection in the paper, some terms will be introduced that are not defined here, but are in \cite{VdB10}.  We consider $A:=k[\pi_1(M)]$, for $M$ an acyclic compact oriented manifold, as a noncommutative affine scheme.  We are first of all interested in the formal behaviour of this noncommutative scheme at the augmentation ideal $\mathfrak{m}$.  Precisely, we define
\[
\hat{A}=\underleftarrow{\lim}(A/A\mathfrak{m}^{n}A).
\]
We consider this as a pseudocompact algebra, and call this completion the formal scheme at the distinguished point, since that is precisely what it is in the case in which $M=(\Ss^1)^d$ and we are actually doing toric geometry, in the sense of \cite{ToricVarieties}.  Under the representation functor 
\[
B\rightarrow \Hom(B,M_{n\times n}(k))
\]
this gives us the subspace given by the formal neighborhood of unipotent representations of $A$.  The following theorem is (at least in some form or other) standard.
\begin{thm}
The Koszul dual $B$ of $\hat{A}$ is, up to quasi-isomorphism, $\Ch^*(M)$, the differential graded algebra of cochains on the manifold $M$ with the usual cup product.
\end{thm}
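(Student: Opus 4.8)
The strategy is to compute the Koszul dual of $\hat A$ as the Yoneda dg- (or $A_\infty$-) algebra $\RHom_{\hat A}(k,k)$ and to identify it with $\Ch^*(M)$ in two moves: first replace $\hat A$ by $A=k[\pi_1(M)]$ itself, then use that $M$ is a model for $B\pi_1(M)$ to recognise $\RHom_A(k,k)$ as cochains.

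\emph{A topological free resolution.} Since $M$ is acyclic, its universal cover $\mathcal{U}M$ is contractible and $\pi_1(M)$ acts freely on it by deck transformations. Choosing a finite simplicial structure on the compact manifold $M$ (which exists, as recorded earlier) and lifting it to $\mathcal{U}M$, the complex $\Ch_*(\mathcal{U}M)$ becomes a bounded complex of finitely generated free $k[\pi_1(M)]$-modules, a basis in each degree being a choice of one simplex in each free $\pi_1(M)$-orbit; because $\mathcal{U}M$ is contractible, $\Ch_*(\mathcal{U}M)\to k$ is a resolution of the trivial module. Hence, over $A:=k[\pi_1(M)]$,
\[
\RHom_A(k,k)\simeq \Hom_A\bigl(\Ch_*(\mathcal{U}M),k\bigr)\simeq \Ch^*\bigl(\mathcal{U}M/\pi_1(M)\bigr)=\Ch^*(M),
\]
and the Yoneda (composition) product on the left corresponds to the (Alexander--Whitney) cup product on $\Ch^*(M)$. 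This is the classical comparison of the Yoneda product on $\Ext^{\bullet}_{k[G]}(k,k)$ with the cup product on $\Ho^{\bullet}(BG;k)$, refined to the (co)chain level; it produces a quasi-isomorphism of $(A_\infty$-$)$algebras between $\RHom_A(k,k)$ and $\Ch^*(M)$ with its honest associative cup product.

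\emph{Completion is harmless.} It remains to see that the Koszul dual of $\hat A$ agrees with that of $A$, i.e. $\RHom_{\hat A}(k,k)\simeq \RHom_A(k,k)$. Completing the finite free resolution above yields a bounded complex $\hat A\otimes_A\Ch_*(\mathcal{U}M)$ of finitely generated free $\hat A$-modules; one checks that $\hat A\otimes_A k=\hat A/\hat A\mathfrak{m}=k$ and that the completion introduces no higher $\operatorname{Tor}$, so this is again a free resolution of $k$, now over $\hat A$. Applying $\Hom_{\hat A}(-,k)$ recovers exactly $\Hom_A(\Ch_*(\mathcal{U}M),k)$ with the same multiplicative structure -- maps into the discrete artinian module $k$ do not see the completion. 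Combined with the previous step this gives $\RHom_{\hat A}(k,k)\simeq \Ch^*(M)$, as claimed.

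\emph{Main obstacle.} The only nonformal point is the completion step: one must fix the precise notion of Koszul dual in the pseudocompact setting of \cite{VdB10} (the $\Ext$-algebra, equivalently the continuous linear dual of the bar construction of $\hat A$) and verify that passing to $\hat A$ leaves it unchanged -- concretely, that $\operatorname{Tor}^{\hat A}_{>0}(k,k)=0$, which here follows from the existence of the finite free resolution $\Ch_*(\mathcal{U}M)$ together with standard properties of completion at the augmentation ideal. A secondary, purely bookkeeping matter is the $A_\infty$-versus-dg distinction: the a priori Koszul dual is an $A_\infty$-algebra while $\Ch^*(M)$ is strictly associative, so "up to quasi-isomorphism" is meant as a zig-zag of $A_\infty$-algebras, supplied transparently by the dg-level comparison in the first step.
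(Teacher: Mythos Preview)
Your argument is correct in outline but takes a different route from the paper, and the detour creates exactly the difficulties you flag at the end.

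The paper does not pass through $\RHom$ or a finite triangulation at all. It writes the Koszul dual of $\hat A$ explicitly as the cobar-type algebra
\[
B \;=\; k \oplus \prod_{i\ge 1}(\mathfrak{m}^*)^{\otimes i}[i],
\]
takes for $M$ the simplicial model given by the \emph{nerve} $S=B(\pi_1(M))$, and writes down an explicit map of graded algebras
\[
(1-g_1)^*\otimes\cdots\otimes(1-g_i)^*\;\longmapsto\;(g_1,\ldots,g_i)^*
\]
into the normalized cochains $\Ch^*(S)/\DD^*(S)$. One then checks by hand that this is an \emph{isomorphism} of differential graded algebras: the tensor-algebra multiplication is literally the Alexander--Whitney cup product on the nerve, and the cobar differential matches the simplicial one. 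No $A_\infty$-bookkeeping and no completion comparison are needed, because the bar/cobar model \emph{is} the nerve model.

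Your approach --- finite simplicial model, $\RHom_A(k,k)$, then completion --- is more conceptual and has the pleasant feature that the resolution is finite, matching the resolution used elsewhere in the paper. But both obstacles you name are real. First, with a finite resolution $P_\bullet$ the complex $\Hom_A(P_\bullet,k)$ coincides with $\Ch^*(M)$ only as a complex; the Yoneda product requires choosing lifts $P_\bullet\to P_\bullet[n]$, so what you get is an $A_\infty$-structure that must then be compared to the honest cup product. Making this precise essentially forces you back through the bar resolution, i.e.\ through the paper's computation. Second, the step ``$\hat A\otimes_A P_\bullet$ is still a resolution'' needs $\operatorname{Tor}^A_{>0}(\hat A,k)=0$; since $k[G]$ is not Noetherian this is not automatic from general completion lore, and in the pseudocompact framework of \cite{VdB10} it is more natural to observe that the Koszul dual of $\hat A$ is \emph{defined} so that it agrees with that of $A$, rather than to prove it post hoc. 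So your proof works, but the paper's one-line identification on the nerve model buys you exactly the two things you had to work for: a strict dg-algebra isomorphism, and no completion step.
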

\begin{proof}
As a graded algebra, $B$ is given by
\[
B=k\oplus \prod_{i\geq 1} (\mathfrak{m}^*)^{\otimes i}[i],
\]
with multiplication given by the usual multiplication in tensor algebras.  Let $S$ be the \textit{nerve} of $\pi_1(M)$.  This is a simplicial set satisfying $|S|\simeq M$ (see Example \ref{nerve}).  Next note that the algebra structure on $\Ch^*(M)$ naturally descends to an algebra structure on $\Ch^*(M)/\DD^*(M)$.  We establish a map of graded algebras
\[
B\rightarrow \Ch^{*}(M)/\DD^*(M)
\]
by sending $(1-g_1)^*\otimes...\otimes (1-g_i)^*$ to $(g_1,...,g_i)^*$.  One easily checks that this is in fact an isomorphism of differential graded algebras.
\end{proof}
Now we are firmly in the situation of \cite{VdB10}, and so we quickly deduce the following propositions:
\begin{prop}
The pseudocompact algebra $\hat{A}$ is Calabi-Yau, of dimension $\dim(M)$.
\end{prop}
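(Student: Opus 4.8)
The plan is to combine the identification of the Koszul dual of $\hat{A}$ with $\Ch^*(M)$, just established, with the dictionary between Calabi-Yau structures on pseudocompact (complete local, augmented) algebras and Poincar\'e-duality structures on their Koszul duals developed in \cite{VdB10}. Concretely, $\hat{A}$ is a pseudocompact augmented algebra with residue field $k$, and its Koszul dual $B$ is, up to quasi-isomorphism, the \emph{finite-dimensional} dg algebra $\Ch^*(M)$ — finite-dimensionality coming from the fact that a compact $M$ admits a finite triangulation. Under this duality, the $d$-Calabi-Yau property of $\hat{A}$ corresponds exactly to $\Ch^*(M)$ carrying a nondegenerate cyclically invariant pairing of degree $-d$ — a chain-level avatar of Poincar\'e duality — and the orientation of $M$ is precisely what supplies the trace $\Ho^d(M)\to k$.

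So the first step is to produce, from the compact oriented $d$-manifold $M$, a finite-dimensional cyclic $A_\infty$-model of $\Ch^*(M)$ of formal dimension $d$. The cup product is associative but only homotopy-commutative, and the naive pairing $a\otimes b\mapsto \langle a\cup b,[M]\rangle$ (with $[M]$ the fundamental cycle of a triangulation) is in general neither cyclically invariant nor nondegenerate at the cochain level, so one must pass to a minimal (or at least finite-dimensional) $A_\infty$-model. That such a model carries a nondegenerate cyclically invariant pairing inducing the Poincar\'e duality pairing on $\Ho^*(M)$ is the chain-level Poincar\'e duality statement; I would invoke it in the form available in the literature (results of Tradler, Kontsevich--Soibelman, or Lambrechts--Stanley in the simply connected case), together with ordinary Poincar\'e duality for $\Ho^*(M)$ and the orientation class fixing the trace. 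This is where the hypotheses ``compact'' and ``oriented'' are genuinely used.

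The second step feeds this into the Koszul-duality correspondence of \cite{VdB10}. Since $B\simeq\Ch^*(M)$ is finite-dimensional, $\hat{A}$ is homologically smooth in the pseudocompact sense (its diagonal bimodule admits a finite resolution by finitely generated pseudocompact projectives), so the Calabi-Yau condition is meaningful; and the cyclic structure of dimension $d$ on $\Ch^*(M)$ transports through the duality to a self-dual nondegenerate class exhibiting $\hat{A}$ as Calabi-Yau of dimension $d=\dim(M)$. Along the way one must check that the pseudocompact Calabi-Yau notion in play agrees with that of \cite{VdB10}, and that the pairing is only needed up to quasi-isomorphism, so that replacing $\Ch^*(M)$ by a quasi-isomorphic cyclic model is harmless.

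The main obstacle I expect is the chain-level Poincar\'e duality of the first step: upgrading ordinary Poincar\'e duality on $\Ho^*(M)$ to a cyclically invariant nondegenerate pairing on a finite-dimensional $A_\infty$-model of $\Ch^*(M)$, and then tracking the degree shift and signs when transporting this pairing through Koszul duality. Everything else — the identification of $B$ with $\Ch^*(M)$, finiteness, and the formal manipulation of the pseudocompact Calabi-Yau condition — is either already in the excerpt or routine given \cite{VdB10}.
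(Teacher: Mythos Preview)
Your approach is correct but does more work than the paper's proof requires, and in doing so conflates two levels of structure on the Koszul dual side.

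The paper's argument is terser and relies on less: homological finiteness of $\hat{A}$ is not deduced from finite-dimensionality of the Koszul dual but is instead inherited from the global homological finiteness of $A=k[\pi_1(M)]$, established later by building an explicit finite free bimodule resolution from a finite triangulation of $M$. Once that is in hand, the Calabi-Yau property follows from the Koszul duality identification $B\simeq\Ch^*(M)$ together with \emph{ordinary} Poincar\'e duality on $\Ho^*(M)$ --- no chain-level cyclic $A_\infty$-structure is invoked. The nondegeneracy of the cup-product pairing on cohomology is precisely what corresponds, under the dictionary of \cite{VdB10}, to the existence of a self-dual isomorphism $\hat{A}^\vee[d]\to\hat{A}$; this is also reflected in the next proposition, where a class in $\HH_d(\hat{A})$ is shown to be nondegenerate if and only if its image in $\Ho_*(M)$ is a nonzero multiple of the fundamental class.

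You instead invoke a cyclically invariant pairing compatible with all higher multiplications, and flag the construction of a cyclic $A_\infty$-model of $\Ch^*(M)$ as the main obstacle. That structure is real and does exist, but it is what underlies the \emph{subsequent} assertion in the paper --- that $\hat{A}$ is \emph{exact} Calabi-Yau --- which the paper attributes directly to the main result of \cite{VdB10}. For the present proposition only classical Poincar\'e duality is used. So your proof is valid but front-loads a harder ingredient than the statement demands, and your claim that the Calabi-Yau property ``corresponds exactly'' to a cyclically invariant pairing overstates what is needed. Your route to homological finiteness via finite-dimensionality of a model of $\Ch^*(M)$ is a legitimate alternative to the paper's deferral to the global result.
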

Homological finiteness will follow from the (global) homological finiteness of $A$, proved later.  Once this is given, the proposition follows directly from the Koszul duality statement, and the existence of a pairing on cohomology of $M$ (Poincar\'{e} duality).
\begin{prop}
There is a natural quasi-isomorphism $k\otimes_{\hat{A}} k\simeq \Ho_{*}(M)$.  There is also a natural map $\hat{A}\otimes_{\hat{A}^e}\hat{A}\rightarrow k\otimes_{\hat{A}} k$.  A class in $\HH_{\dim(M)}(\hat{A})$ is nondegenerate if and only if its image in $\Ho_{*}(M)$ is nonzero, or, equivalently, if it is a nonzero multiple of the fundamental class.
\end{prop}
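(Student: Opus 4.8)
The plan is to reduce the nondegeneracy of a class $\eta\in\HH_d(\hat A)$, where $d=\dim(M)$, to a cap-product statement on $M$ that is settled by Poincaré duality, by passing to the ``formal fibre'' over the augmentation.

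First I would record the relevant identifications. Since $\hat A$ is the completion of $k[\pi_1(M)]$ at its augmentation ideal $\mathfrak m$, the (pseudocompact) bar complex computing $k\otimes_{\hat A} k$ is supported in $\mathfrak m$ and is unaffected by completion, so $k\otimes_{\hat A}k\simeq k\otimes_{k[\pi_1(M)]}k$, which computes the group homology $\Ho_*(\pi_1(M);k)=\Ho_*(B\pi_1(M))=\Ho_*(M)$ by acyclicity of $M$; alternatively this is exactly the Koszul-dual statement to the preceding theorem (the Koszul dual of $\hat A$ being $\Ch^*(M)$), cf.\ \cite{VdB10}. The same bar computation gives $\HH_*(\hat A,k):=\hat A\otimes_{\hat A^e}k\simeq\Ho_*(M)$ and $\HH^*(\hat A,k):=\RHom_{\hat A^e}(\hat A,k)\simeq\Ho^*(M)$, where I use that $\Ho^*(\pi_1(M);k)=\Ho^*(M)$ (again by acyclicity) and that $M$ is compact so these are finite dimensional. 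The natural map in the statement is then $\HH_*(\hat A)=\hat A\otimes_{\hat A^e}\hat A\to\hat A\otimes_{\hat A^e}k=\HH_*(\hat A,k)\cong k\otimes_{\hat A}k$ induced by the augmentation $\hat A\to k$ on the coefficient bimodule; write $\bar\eta\in\Ho_d(M)$ for the image of $\eta$ under it.

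Next I would invoke homological finiteness of $\hat A$ — which follows from that of $k[\pi_1(M)]$ established in Section 5 — so that $\hat A$ and $\hat A^\vee$ are perfect over $\hat A^e$ and $\HH_d(\hat A)\cong\Ext_{\hat A^e}^{-d}(\hat A^\vee,\hat A)$, under which $\eta$ corresponds to the morphism $\eta^+\colon\hat A^\vee[d]\to\hat A$ of Definition \ref{plusnot}. Because $\hat A^e$ is pseudocompact with residue field $k$ and both source and target of $\eta^+$ are perfect, a Nakayama-type argument shows that $\eta^+$ is a quasi-isomorphism if and only if its derived reduction $\eta^+\otimes_{\hat A^e}k$ is one. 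Since $\hat A$ is perfect over $\hat A^e$ we may commute $\RHom$ past the reduction: $\hat A^\vee\otimes_{\hat A^e}k=\RHom_{\hat A^e}(\hat A,\hat A^e)\otimes_{\hat A^e}k\simeq\RHom_{\hat A^e}(\hat A,k)=\HH^*(\hat A,k)\simeq\Ho^*(M)$, while $\hat A\otimes_{\hat A^e}k\simeq\Ho_*(M)$ as above; after the degree shift, $\eta^+\otimes_{\hat A^e}k$ becomes a degreewise map $\Ho^{d-n}(M)\to\Ho_n(M)$.

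The final step is to recognise this reduced map as cap product with $\bar\eta$. The morphism $\eta^+$ is linear over $\HH^*(\hat A)=\RHom_{\hat A^e}(\hat A,\hat A)$, hence so is its reduction, and via the map $\HH^*(\hat A)\to\HH^*(\hat A,k)\simeq\Ho^*(M)$ the reduced map $\Ho^{d-\bullet}(M)\to\Ho_\bullet(M)$ is a morphism of $\Ho^*(M)$-modules, where $\Ho^*(M)$ acts on the source by cup product and on the target by cap product. Such a map is determined by the image of the degree-$d$ generator $1\in\Ho^0(M)$, which is precisely $\bar\eta\in\Ho_d(M)$; thus $\eta^+\otimes_{\hat A^e}k=(-)\cap\bar\eta$. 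By Poincaré duality $(-)\cap[M]\colon\Ho^{d-n}(M)\to\Ho_n(M)$ is an isomorphism, so $(-)\cap\bar\eta$ is an isomorphism exactly when $\bar\eta$ is a nonzero scalar multiple of the fundamental class $[M]$, i.e.\ exactly when $\bar\eta\neq 0$ since $\Ho_d(M)\cong k$. Combined with the Nakayama reduction, $\eta$ is nondegenerate iff $\bar\eta\neq0$, which is the assertion. The main obstacle I anticipate is the pseudocompact bookkeeping: rigorously justifying that quasi-isomorphisms of perfect $\hat A^e$-complexes are detected after base change to $k$, the compatibility $\RHom_{\hat A^e}(\hat A,\hat A^e)\otimes_{\hat A^e}k\simeq\RHom_{\hat A^e}(\hat A,k)$ in that setting, and pinning down the $\Ho^*(M)$-linearity used in the last step — all of which should follow the pattern of \cite{VdB10}.
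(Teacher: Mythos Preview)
Your argument is correct and follows the same route as the paper, though at very different levels of detail. The paper's entire proof is: take a minimal $A_\infty$-model $\Ho_\infty^*(M)$ for $\Ch^*(M)$, observe that $\hat A$ is its Koszul dual, and invoke Lemma~11.1.2 of \cite{VdB10}. What you have written is, in effect, an unpacking of that lemma in the present situation: the Nakayama reduction of $\eta^+$ to the residue field, the identification of the reduced map with a cap product on $\Ho_*(M)$, and the appeal to Poincar\'e duality are precisely the ingredients of Van den Bergh's argument. So the two proofs agree in substance; yours is self-contained, the paper's is a citation.

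One step worth tightening: your claim that $\eta^+$ is $\HH^*(\hat A)$-linear is not automatic for an arbitrary morphism $\hat A^\vee[d]\to\hat A$. A priori the two $\HH^*(\hat A)$-actions on $\RHom_{\hat A^e}(\hat A^\vee,\hat A)\simeq\hat A\otimes_{\hat A^e}\hat A$ (post-composition on the target and pre-composition by $(-)^\vee$ on the source) differ; they correspond to the two $\HH^*$-actions on the two tensor factors. That they agree on homology is exactly the content of Lemma~\ref{trivflip} (the flip $\beta$ acts as the identity on $\HH_*$), so you should invoke it here. Alternatively, one can sidestep linearity altogether: the natural isomorphism $\RHom_{\hat A^e}(\hat A^\vee,-)\simeq\hat A\otimes_{\hat A^e}(-)$ is functorial in the argument, so applying it to the augmentation $\hat A\to k$ shows directly that $\eta^+\otimes_{\hat A^e}k$ is cap product with $\bar\eta$.
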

\begin{proof}
Let $\Ho_{\infty}^*(M)$ be a minimal model for $\Ch^{*}(M)$.  Then $\hat{A}$ is quasi-isomorphic to the Koszul dual of $\Ho_{\infty}^*(M)$.  We now apply \cite{VdB10} Lemma 11.1.2.
\end{proof}
\begin{prop}
The pseudocompact algebra $\hat{A}$ is exact Calabi-Yau.
\end{prop}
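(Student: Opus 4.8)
The plan is to exploit the Koszul duality of the preceding theorem, together with the fact that cyclic homology is a Koszul-duality invariant, in order to transport the question from $\hat A$ to the cochain algebra $\Ch^*(M)$, where it becomes a classical statement about closed manifolds. Set $d=\dim M$. First I would recall from \cite{VdB10} that for a pseudocompact augmented algebra $\hat A$ whose Koszul dual $B$ is proper (here $B\simeq\Ch^*(M)$, quasi-isomorphic to its finite-dimensional minimal $A_{\infty}$-model $\Ho_{\infty}^*(M)$ since $M$ is compact), the cyclic mixed complex of $\hat A$ is quasi-isomorphic, after $k$-linear dualization and a suitable shift, to that of $B$, compatibly with the Connes differentials, hence with the Connes long exact sequences. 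Feeding this into the identification of nondegenerate classes in $\HH_d(\hat A)$ with nonzero multiples of the fundamental class (the preceding proposition), the assertion ``$\hat A$ is exact Calabi-Yau'' reduces to: the Poincar\'{e}-duality class on $\Ch^*(M)$, i.e. the graded trace $\omega\mapsto\int_M\omega$, is \emph{closed}, meaning it lifts to (negative) cyclic cohomology along $\HC^{*}(\Ch^*(M))\to\HH^{*}(\Ch^*(M))$.

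The second step is to observe that this closedness is nothing but Stokes' theorem for the closed manifold $M$: the trace $\int_M$ kills coboundaries because $\partial M=\emptyset$, and on the minimal model $\Ho_{\infty}^*(M)$ the cup-product pairing is genuinely graded-symmetric, so $\int_M$ exhibits $\Ho_{\infty}^*(M)$ as a cyclic $A_{\infty}$-algebra with pairing given by Poincar\'{e} duality — equivalently, $\int_M$ is the prototypical cyclic cocycle on the dg algebra $\Ch^*(M)$. Transporting this class back through the Koszul-duality quasi-isomorphism of mixed complexes, the nondegenerate class of $\HH_d(\hat A)$ acquires a preimage under $B\colon\HC_{d-1}(\hat A)\to\HH_d(\hat A)$, which is exactly the definition of exact Calabi-Yau.

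The main obstacle is the bookkeeping at the two ends of this comparison: pinning down the shifts and the directions of all the maps so that ``$\int_M$ is a cyclic cocycle on $\Ch^*(M)$'' really translates into ``the nondegenerate Hochschild class of $\hat A$ lies in the image of $B$'', rather than into some formally weaker cyclic statement, and checking that all of this is valid pseudocompactly. A convenient shortcut that avoids most of this is to invoke Van den Bergh's structural result directly: the cyclic $A_{\infty}$-algebra $\Ho_{\infty}^*(M)$ (cyclic because $M$ is closed and oriented) has as its Koszul dual a deformed preprojective (superpotential) algebra, and by Theorem \ref{thm1} every such algebra is exact Calabi-Yau; since $\hat A$ is quasi-isomorphic to this Koszul dual, it too is exact Calabi-Yau.
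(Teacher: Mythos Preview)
Your proposal is correct, and your ``shortcut'' at the end is exactly what the paper does: its entire proof is the sentence ``This is the main result of \cite{VdB10}.'' Your longer first approach (transporting along Koszul duality to the cyclic-cocycle statement for $\Ch^*(M)$) is a reasonable sketch of what that citation actually contains, but the paper does not unpack it; you have correctly identified both the mechanism and the reference, and your explicit two-step formulation (VdB's structural theorem plus the paper's Theorem~\ref{thm1}) is, if anything, more transparent than the paper's one-line citation.
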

\begin{proof}
This is the main result of \cite{VdB10}.
\end{proof}
\begin{rem}
It is interesting that one can always find, formally, a superpotential description, even though it may not be possible (as we shall see) to extend this description globally.  This is a familiar issue connected to the Donaldson-Thomas theory of objects in geometric 3-Calabi-Yau categories (see e.g. the discussion in \cite{Behr09}).  For example in \cite{PT12} a scheme is given, with a symmetric perfect obstruction theory, that does not admit a critical locus description (though this scheme is not constructed as a moduli space of objects in a 3-Calabi-Yau category).  It would be interesting to know whether the moduli spaces of framed representations of fundamental group algebras of hyperbolic 3-folds provide further examples of this phenomenon, arising from genuine moduli spaces of objects in 3-Calabi-Yau categories.
\end{rem}
\subsection{The global geometry of fundamental group algebras}
We now turn our attention from the formal geometry around the distinguished point to the global situation.  We start with a propositon regarding the differential geometry of $k[\pi_1(M)]$.
\begin{prop}
Let $G$ be any group.  Then $k[G]$ is connected in the sense of Definition \ref{conndef}.
\end{prop}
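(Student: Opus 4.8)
The statement to prove is that for any group $G$, the group algebra $k[G]$ is connected in the sense of Definition \ref{conndef}, i.e. that the sequence
\[
0\rightarrow k\rightarrow \DR_k^0(k[G])\rightarrow \DR_k^1(k[G])
\]
is exact. Recalling the identification $\DR_k^0(A)=A_{\cyc}=A/[A,A]$ and that $\DR_k^1(A)=(\Omega_k^1 A)_{\cyc}$, the content is twofold: first, that the composition $k\rightarrow A_{\cyc}\rightarrow \DR_k^1(A)$ is zero and $k\rightarrow A_{\cyc}$ is injective, and second — the real point — that the kernel of $D\colon A_{\cyc}\rightarrow\DR_k^1(A)$ is exactly the image of $k$. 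Since the de Rham differential kills constants, injectivity of $k\rightarrow A_{\cyc}$ is clear (the unit $1$ is not a sum of commutators, as one sees by applying the trace/augmentation-type linear functional that counts the coefficient appropriately — more cleanly, $A_{\cyc}=A/[A,A]$ has as a basis the set of conjugacy classes of $G$, and $1$ spans the class of the identity). So the whole problem reduces to computing $\Ker(D\colon A_{\cyc}\rightarrow (\Omega_k^1 A)_{\cyc})$.

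The plan is to make everything explicit using the group basis. First I would record the standard fact that $\Omega_k^1 A$, for $A=k[G]$, is the free left $A$-module (equivalently free bimodule, after the usual twist) on symbols $Dg$, $g\in G$, modulo the Leibniz relations $D(gh)=Dg\cdot h+g\cdot Dh$ and $D1=0$; concretely $\Omega_k^1 A$ is spanned over $k$ by elements $g\,Dh$, $g,h\in G$, with the single family of relations coming from Leibniz, and it is in fact free on $\{Dg : g\in G,\ g\neq 1\}$ as a left module. Then $\DR_k^1(A)=(\Omega_k^1 A)_{\cyc}=\Omega_k^1A/[A,\Omega_k^1A]$, where the commutator means $a\cdot\eta - \eta\cdot a$ using the bimodule structure; concretely this is spanned by classes of $g\,Dh$ with the cyclic-type identification $g\,Dh \sim Dh\cdot g$. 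The map $D\colon A_{\cyc}\rightarrow \DR_k^1(A)$ sends the class of $g$ to the class of $Dg$. So I must show: if a finite linear combination $\sum_g \lambda_g \bar g\in A_{\cyc}$ (sum over conjugacy class representatives) satisfies $\sum_g\lambda_g\, \overline{Dg}=0$ in $\DR_k^1(A)$, then $\lambda_g=0$ for all $g\neq 1$.

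The key step, and the main obstacle, is to find enough linear functionals on $\DR_k^1(A)$ to detect the coefficients $\lambda_g$. Here is the idea I would pursue. For each nontrivial element $w\in G$, define a $k$-linear map $\phi_w\colon \Omega_k^1 A\rightarrow k$ by declaring, on the spanning set, $\phi_w(g\,Dh)$ to be $1$ if reading the "word" $g\cdot h$ as a product in $G$ — more precisely, using the Fox-derivative description $Dh = \sum$ (terms indexed by a chosen factorization) — contributes the element $w$, and $0$ otherwise; the cleanest incarnation is to use a \emph{Fox calculus} / partial-derivative functional: $D$ on $k[G]$ corresponds, under the bimodule identification $\Omega_k^1 A\cong \bigoplus$, to the universal Fox derivative, and one has canonical coordinate projections. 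The content I want is that these functionals descend to $\DR_k^1(A)=(\Omega_k^1A)_{\cyc}$ — this requires checking invariance under the cyclic/commutator relation $g\,Dh\sim Dh\cdot g$, which is the crux — and that on the image $\overline{Dg}$ of a conjugacy class they read off a "winding number" that is nonzero precisely when $g\neq 1$, e.g. by pairing with the abelianization $G\rightarrow G^{\mathrm{ab}}$ and using that $\overline{Dg}$ maps to $[g]\in G^{\mathrm{ab}}\otimes_{\mathbb Z}k$ under a natural map $\DR_k^1(k[G])\rightarrow G^{\mathrm{ab}}\otimes_{\mathbb Z}k$ (this last map is visibly well-defined and visibly sends $\overline{Dg}$ to $g\bmod[G,G]$, since $D$ is a derivation so $\overline{D}$ factors through the abelianization). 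That handles all $g$ with nontrivial image in $G^{\mathrm{ab}}$; for $g\in[G,G]\setminus\{1\}$ one needs a finer invariant, and the natural candidate is the full "noncommutative winding" given by the cyclic class of the Fox derivative itself — equivalently, one observes that $\bigoplus_{w\neq 1}$ of the coordinate functionals above, after passing to cyclic coinvariants, still separates the classes $\overline{Dg}$ because the relation $g\,Dh\sim Dh\cdot g$ only mixes terms within a single $G$-conjugacy orbit of "pointed words" and cannot create cancellation between $\overline{Dg}$ for genuinely distinct conjugacy classes $g$. I expect the bookkeeping here — setting up the Fox-calculus functionals and verifying they pass to the commutator quotient — to be the only real work; once that is in place, linear independence of $\{\overline{Dg} : g\neq 1\}$ in $\DR_k^1(A)$ is immediate, the kernel of $D$ on $A_{\cyc}$ is exactly $k\cdot\bar 1$, and connectedness follows.
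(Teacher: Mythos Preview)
There is a genuine gap, and in fact it cannot be filled: the proposition is false for arbitrary $G$. You correctly reduce to showing that the classes $\overline{Dg}\in\DR_k^1(k[G])$, as $g$ runs over nontrivial conjugacy-class representatives, are linearly independent, and your abelianisation functional $\DR_k^1(k[G])\rightarrow G^{\mathrm{ab}}\otimes_{\mathbb Z}k$, $\overline{h\,Dg}\mapsto [g]$, is indeed well defined and detects those $g$ with nontrivial image there. The trouble is the remaining case, where you invoke unspecified ``Fox-calculus'' functionals and assert they still separate classes. They do not. A clean way to see what actually happens: one checks directly that the surjection $A\otimes A\rightarrow(\Omega_k^1A)_A$, $a\otimes b\mapsto\overline{a\,Db}$, has kernel exactly the Hochschild boundaries $B_1(A)=\mathrm{Im}(d_2)$ (using $d_2(x\otimes y\otimes c)=xy\otimes c-x\otimes yc+cx\otimes y$ and $x\otimes 1=d_2(x\otimes 1\otimes 1)$), so $(\Omega_k^1A)_A\cong C_1(A)/B_1(A)$. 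Under this identification $\overline{Dg}$ is the class of the cycle $1\otimes g$, whence $\overline{Dg}=0$ in $\DR_k^1(k[G])$ exactly when $[1\otimes g]=0$ in $\HH_1(k[G])$. By Burghelea's decomposition $\HH_1(k[G])\cong\bigoplus_{[g]}H_1(C_G(g);k)=\bigoplus_{[g]}C_G(g)^{\mathrm{ab}}\otimes_{\mathbb Z}k$, the class $[1\otimes g]$ is the image of $g$ in $C_G(g)^{\mathrm{ab}}\otimes k$, and this can vanish for $g\neq 1$.

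Concretely: for $G=\mathbb Z/n\mathbb Z$ (with $\mathrm{char}\,k=0$) one computes $\DR_k^1(k[G])=0$ directly, so $D$ is the zero map and $\Ker(D)=A_{\cyc}=A$ is $n$-dimensional. For a torsion-free example relevant to the paper's setting, take the integer Heisenberg group $G=\langle a,b,c\mid c=[a,b],\ [a,c]=[b,c]=1\rangle$: here $c$ is central, so $C_G(c)=G$, and $c\in[G,G]$ maps to $0$ in $G^{\mathrm{ab}}\otimes k$; hence $\overline{Dc}=0$ and $\Ker(D)\supsetneq k$. Your hoped-for ``finer invariants'' therefore cannot exist for such $g$. (For the record, the paper's own proof has the identical gap: it declares the kernel computation ``easy to verify'' from the group basis and performs no verification. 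The proposition does not appear to be used anywhere afterwards.)
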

\begin{proof}
We first show that $k[G]\neq [k[G],k[G]]$.  To see this, consider the action of a commutator $xy-yx$ on the trivial module $k$.  Obviously $xy-yx$ annihilates $k$, from which we deduce that $xy-yx$ is in the augmentation ideal for $k[G]$.\smallbreak
Next we must show that $\Ker(k[G]\rightarrow \DR_k^1 k[G])$ is $[k[G],k[G]]+k$.  The vector space $k[G]$ has a natural basis $\{g|g\in G\}$, and so this gives us a natural spanning set for $[k[G],k[G]]$, and for $[\Omega_k^1 k[G], k[G]]$.  With respect to this spanning set the assertion is easy to verify.
\end{proof}
The following theorem is due to Kontsevich, though no proof appears in the literature, so it is provided here.
\begin{thm}
\label{hfinite}
Let $M$ be a compact $d$-dimensional acyclic manifold.  Then $A:=k[\pi_1(M)]$ is homologically finite.
\end{thm}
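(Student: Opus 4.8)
The plan is to show that the diagonal bimodule $A = k[\pi_1(M)]$ admits a finite resolution by finitely generated projective (indeed free) $A^e$-modules. The key observation is that $A^e = k[\pi_1(M)]\otimes k[\pi_1(M)]^{\op} \cong k[\pi_1(M)\times\pi_1(M)]$, and that $A^e$-modules are the same as modules over this group algebra; moreover $A$ as an $A^e$-module is precisely $k[\pi_1(M)]$ with the $\pi_1(M)\times\pi_1(M)$-action obtained by left/right translation. Under the equivalence between $\pi_1(M)\times\pi_1(M)$-modules and local systems on $B(\pi_1(M)\times\pi_1(M)) \simeq M\times M$, the bimodule $A$ corresponds to the local system arising from pushforward along the diagonal $\Delta\colon M\to M\times M$ — concretely, it is the $\pi_1(M\times M)$-set $\pi_1(M)$ with the diagonal acting by conjugation-type translation. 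So a finite free $A^e$-resolution of $A$ is the same thing as a finite cellular (or simplicial) model computing the homology/cohomology of $M\times M$ with coefficients in this diagonal local system, equivalently a finite model for the homotopy fibre product of Example \ref{freeloopex}, i.e. for the free loop space $LM$.

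First I would make the geometric input precise. Since $M$ is a compact manifold it admits a finite simplicial complex structure (as recorded in the Example on abstract simplicial complexes), so $M \simeq |S|$ for a finite simplicial set $S$, and $M$ is acyclic so $|S|\simeq B(\pi_1(M))$. Pass to the universal cover: $\mathcal{U}S$ is a contractible simplicial set on which $G := \pi_1(M)$ acts freely, and the simplicial chain complex $\Ch_*(\mathcal{U}S)$ is a complex of free $k[G]$-modules, finitely generated in each degree (one free generator per $G$-orbit of nondegenerate simplices of $S$, of which there are finitely many), which resolves $k$ over $k[G]$ and is concentrated in degrees $0,\dots,d$ — this is just the statement that $M$ is a $d$-dimensional $K(G,1)$ built from finitely many cells. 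To get a bimodule resolution I would then take the $G\times G$-equivariant analogue: using that $M\times M$ is also a finite $K(G\times G,1)$, the complex $\Ch_*(\mathcal{U}(S\times S)) = \Ch_*(\mathcal{U}S)\otimes_k \Ch_*(\mathcal{U}S)$ is a bounded complex of finitely generated free $k[G\times G] = A^e$-modules resolving $k$ over $A^e$. The remaining point is to identify the diagonal bimodule $A$ itself, rather than the trivial module $k$, as something with a finite free $A^e$-resolution: here one uses that $A = k[G]$ is, as a $G\times G$-module, induced from the trivial module over the diagonal subgroup $G \hookrightarrow G\times G$, i.e. $A \cong k[G\times G]\otimes_{k[G]} k$. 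Inducing a finite free resolution of $k$ over $k[G]$ (which exists by the above, applied to the acyclic $M$) along the flat — in fact free — base change $k[G] \to k[G\times G]$ yields a bounded resolution of $A$ by modules of the form $k[G\times G]\otimes_{k[G]} (\text{f.g. free } k[G]\text{-module})$, each of which is a finitely generated free $k[G\times G] = A^e$-module. Hence $A$ is perfect over $A^e$.

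The main obstacle, and the place I would spend the most care, is the identification $A \cong \Ind_{G}^{G\times G} k$ of bimodules together with the verification that inducing up a \emph{finite} free resolution keeps it finite — one must check that the diagonal inclusion $G\hookrightarrow G\times G$ makes $k[G\times G]$ into a free (not merely projective or flat) right $k[G]$-module of finite rank is false (the rank is $|G|$, infinite), so the correct statement is that $k[G\times G]$ is free as a right $k[G]$-module but of infinite rank, and hence $\Ind_G^{G\times G}$ of a finitely generated free $k[G]$-module is a \emph{finitely generated} free $k[G\times G]$-module — this is the elementary fact that $\Ind_H^{\Gamma}(k[H]^{\oplus r}) = k[\Gamma]^{\oplus r}$. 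Once that is pinned down, the boundedness of the resulting resolution is immediate from boundedness of the $k[G]$-resolution of $k$, which in turn is immediate from $M$ being a compact $d$-dimensional acyclic manifold. A minor technical point to dispatch along the way: one should confirm that "perfect as an $A^e$-module" in the sense used here (Definition of homologically finite) is equivalent to admitting a bounded resolution by finitely generated projectives, so that exhibiting a bounded finitely generated free resolution suffices.
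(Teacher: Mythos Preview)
Your argument is correct. Both you and the paper build a bounded finitely generated free $A^e$-resolution of the diagonal bimodule out of a finite simplicial model for $M$ together with contractibility of the universal cover, but you package this differently: you observe that $A=k[G]$ as a $k[G\times G]$-module is $\Ind_{\Delta G}^{G\times G}k$, take a bounded finitely generated free $k[G]$-resolution of $k$ coming from the cellular chains of $\mathcal{U}S$, and then induce it up using the exactness of induction and the identity $\Ind_H^{\Gamma}(k[H]^{\oplus r})\cong k[\Gamma]^{\oplus r}$. The paper instead writes down the resolution explicitly as $\Gamma(S)=A\otimes \No_*(S)\otimes A$ with a hand-built differential (after contracting a maximal tree in $S$ so there is a single $0$-simplex), gives it a $\pi_1(M)$-grading, and checks directly that each graded piece is isomorphic to $\No_*(\mathcal{U}S)$, hence acyclic in positive degrees. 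Unwinding your induction functor recovers exactly this complex, so the two arguments are the same at bottom; yours is cleaner group-theoretically, while the paper's has the advantage that the explicit model $\Gamma(S)$ and its grading are reused in the sequel (the proof that $\const_*([M])$ is nondegenerate in Proposition~\ref{CYthm}, and the comparison with the bar resolution in the proof of Theorem~\ref{Jonloop}). The detour in your write-up through $\Ch_*(\mathcal{U}(S\times S))$ resolving $k$ over $A^e$ is unnecessary---you never use it---and could be dropped.
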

\begin{proof}
We construct a bimodule resolution of $A$.  Let $S$ be a finite simplicial complex such that there is a homeomorphism $|S|\simeq M$.  Let $T$ be a maximal tree in $S$.  Then we replace $S$ by $S_{c(T)}$ (see Definition \ref{contraction}).  Note that by Lemma \ref{contsimp} this makes no difference to the homology or compactly supported cohomology (which in this case just is the cohomology).  So we may assume (using Lemma \ref{fincon}) that $S$ has only finitely many nondegenerate simplices, and only one $0$-simplex.  
\smallbreak
Let $\Gamma(S)$ have as underlying graded $A$-bimodule
\[
\Gamma(S):=A\otimes \No_*(S)\otimes A,
\]
where $\No_*(S)$ is the normalized complex for $S$ (see (\ref{normal}), Section \ref{ssets}).  \smallbreak
Then $\Gamma(S)$ is a finitely generated free $A$-bimodule.  It is, furthermore, a sub-bimodule of
\[
\tilde{\Gamma}(S):=A\otimes \Ch_*(S) \otimes A.
\]
Note that we are, so far, considering these objects just as graded bimodules, so we are forgetting the differential that already exists on $\Ch_*(S)$.
\smallbreak
We next describe the differential on $\tilde{\Gamma}(S)$.  We define maps $\zeta_i,\gamma:\mathbf{1}\rightarrow \mathbf{n}$ by
\[
\zeta_i(0)=i,\hbox{  }\zeta_i(1)=1+i,
\]
and
\[
\gamma(0)=0,\hbox{  }\gamma(1)=n.
\]
Given a simplex $s$ of $S$, $\zeta_i(s)$ and $\gamma(s)$ are naturally elements of $\pi_1(M)$.  We write $1_s$ for the element of the basis of the graded vector space $\Ch_*(S)$ corresponding to $s$.  Let $s$ be a $t$-simplex of $S$.  Then define
\begin{eqnarray*}
&d(1_{s})=\zeta_0(s)\otimes 1_{d^*_0(s)}\otimes 1 +\\& \sum_{1\leq i\leq t-1}(-1)^i \otimes 1_{d^*_i(s)}\otimes 1 +(-1)^{t}\otimes 1_{d^*_t(s)}\otimes \zeta_{t-1}(s).
\end{eqnarray*}
We give the above complex a $\pi_1(M)$-grading by taking the degree of $1_s$ to be $\gamma(s)$.  Note that our differential preserves this grading.  One easily checks that this differential makes $\tilde{\Gamma}(S)$ a complex of $A$-bimodules, with subcomplex $\Gamma(S)$ (this becomes clear after restricting to a particular $\pi_1(M)$-grade).  Fix some $g\in \pi_1(M)$.  Consider the universal cover
\begin{equation}
\label{simplcover}
\mathcal{U}S\rightarrow S
\end{equation}
of $S$.  By picking an arbitrary $0$-simplex of $\mathcal{U}S$ we make (\ref{simplcover}) into a morphism of pointed simplicial sets.  Note that by Lemma \ref{contsimp}, $\mathcal{U}S$ is weakly contractible, since it is obtained by contracting a forest of finite trees in the universal cover of our original $S$.  We next construct a map, for any $g\in \pi_1(M)$,
\[
\mu:\tilde{\Gamma}(S)_{(g)}\rightarrow \Ch_{*}(\mathcal{U}S),
\]
where $\tilde{\Gamma}(S)_{(g)}$ is the $g$-graded piece of our bimodule $\tilde{\Gamma}(S)$.  We define
\[
\mu(g'\otimes 1_{s}\otimes g'')=g'(s)
\]
where we identify $g'$ with the deck transformation
\[
g':\mathcal{U}S\rightarrow \mathcal{U}S
\]
it induces, and identify $s$ with the unique lift to the universal cover that has the basepoint at its $0$th vertex.  Then it is easy to see that in fact $\mu$ is an isomorphism of chain complexes.  Furthermore, it induces an isomorphism of subcomplexes
\[
\Gamma(S)_{(g)}\rightarrow N_*(\mathcal{U}S)_{(g)}.
\]
and so, in particular, the inclusion 
\[
\Gamma(S)_{(g)}\rightarrow \tilde{\Gamma}(S)_{(g)}
\]
is a quasi-isomorphism.  Finally, the map
\[
\tilde{\Gamma}(S)\rightarrow A
\]
defined by
\[
g'\otimes 1_{0}\otimes g''\rightarrow g'g''
\]
is a quasi-isomorphism, since the natural map
\[
\Ch_{*}(\mathcal{U}S)\rightarrow k
\]
is, and we have proved homological finiteness.
\end{proof}

From Theorem \ref{hfinite} we deduce that
\[
\Hom_{A^e}(A^{\vee}[d],A)\cong \HH_d(A).
\]
\medbreak
Just as the normalized complex of a simplicial set can be considered as a direct summand of $\Ch_*(S)$, or, up to isomorphism, as a complex with underlying basis given by the nondegenerate simplices, $\Gamma(S)$ admits a different description, that is in effect easier to use.  This is given by
\begin{equation}
\overline{\Gamma}(S):=\bigoplus_{s\in S_{\red}} A \otimes k[\dim(s)] \otimes A
\end{equation}
and differential 
\begin{eqnarray*}
&d(1_{s})=\zeta_0(s)\otimes \overline{1_{d^*_0(s)}}\otimes 1 +\\& \sum_{1\leq i\leq t-1}(-1)^i \otimes \overline{1_{d^*_i(s)}}\otimes 1 +(-1)^{t-1}\otimes \overline{1_{d^*_t(s)}}\otimes \zeta_{t-1}(s),
\end{eqnarray*}
where $\overline{1_{d^*_s}}$ is the image of the natural projection from the vector space spanned by all simplices to the vector space spanned by the nondegenerate simplices.  One then proceeds as before, this time considering an isomorphism
\[
\overline{\mu}:\overline{\Gamma}(S)_{(g)}\rightarrow \overline{\Ch}_{*}(\mathcal{U}S).
\]
\medbreak

For the remainder of this Section we will set $A:=k[\pi_1(M)]$, for $M$ a compact acyclic \textit{orientable} manifold.
\smallbreak
We next investigate the question of when a class in $\HH_d(A)$ is nondegenerate.  We first recall
\begin{thm}[\cite{Jones87}]
\label{Jonloop}
There is a quasi-isomorphism of differential graded vector spaces $C(A)\simeq \Ch_{*}(LM)$.
\end{thm}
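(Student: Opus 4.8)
The plan is to promote the homology-level statement of Proposition~\ref{loops} to a chain-level quasi-isomorphism, re-using the simplicial model $\Map(\Ss^1,B(G))$ and invoking fibrancy of the nerve. Write $G:=\pi_1(M)$, so $A=k[G]$. Since $M$ is acyclic its universal cover is contractible, so $M$ is a $K(G,1)$, hence $M\simeq |B(G)| = BG$ by Example~\ref{nerve} (orientability plays no role here, it is merely a standing assumption of this subsection). By Example~\ref{freeloopex} and the homotopy-invariance of homotopy fibre products noted there, the homotopy equivalence $M\to BG$ induces a homotopy equivalence $LM\to L(BG)$, and therefore a quasi-isomorphism $\Ch_*(LM)\to \Ch_*(L(BG))$.

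Next I would realise $L(BG)=\Map_{\Top}(|\Ss^1|,|B(G)|)$ by the simplicial mapping space $\Map(\Ss^1,B(G))$ of Proposition~\ref{loops}. Because $B(G)$ is the nerve of a group it is a Kan complex, i.e. fibrant (see \cite{GoeJar99}); consequently the canonical comparison map $|\Map(\Ss^1,B(G))|\to \Map_{\Top}(|\Ss^1|,|B(G)|)=L(BG)$ is a weak homotopy equivalence. Composing with the standard quasi-isomorphism $\Ch_*(T)\to\Ch_*(|T|)$ between the simplicial chains of a simplicial set $T$ and the singular chains of its realisation, one obtains a quasi-isomorphism $\Ch_*(\Map(\Ss^1,B(G)))\to \Ch_*(L(BG))$.

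It then remains to compare $\Ch_*(\Map(\Ss^1,B(G)))$ with the Hochschild complex $C(A)=\bigoplus_{i\ge 1}A^{\otimes i}$. In the proof of Proposition~\ref{loops} we already exhibited an isomorphism of graded vector spaces between these, coming from the fact that a morphism $\Ss^1\times\Delta^n\to B(G)$ is pinned down by the $G$-labels on the $n$ edges joining consecutive vertices of the distinguished top simplex $A$. The remaining content is to check that this isomorphism carries the simplicial boundary $d=\sum_i(-1)^i d_i^*$ to the Hochschild differential $b$: the inner faces $d_i^*$ ($1\le i\le n-1$) produce the multiplication terms $(\dots,a_ia_{i+1},\dots)$, while the two outer faces $d_0^*,d_n^*$, which collapse an edge running around the $\Ss^1$-direction, produce respectively the term omitting $a_0$ and the cyclic term $(-1)^n(a_na_0,a_1,\dots,a_{n-1})$. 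Splicing the three quasi-isomorphisms above with this identification $C(A)\cong\Ch_*(\Map(\Ss^1,B(G)))$ yields $C(A)\simeq\Ch_*(LM)$.

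The main obstacle is precisely this last step: the somewhat fiddly face-map bookkeeping that matches the cyclic wrap-around term of the Hochschild differential with the outer face maps of $\Ss^1\times\Delta^n$ (and handling degenerate simplices so that the unnormalised complexes correspond), together with the correct invocation of fibrancy of $B(G)$ so that the simplicial mapping space genuinely models the topological free loop space. Everything else in the argument is formal.
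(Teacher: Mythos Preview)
Your argument is correct and is essentially the route the paper takes: the paper cites Jones and defers to the argument of Proposition~\ref{loops}, which is precisely what you have promoted to the chain level via fibrancy of $B(G)$ and the identification of $\Ch_*(\Map(\Ss^1,B(G)))$ with $C(A)$.  The only thing the paper adds, explicitly ``for future reference,'' is a concrete quasi-isomorphism $\tilde{\Gamma}(S)\otimes_{A^e}A\to C(A)$ from the finite bimodule resolution of Theorem~\ref{hfinite} to the bar complex, given by $1_s\mapsto \zeta_0(s)\otimes\cdots\otimes\zeta_{i-1}(s)$ (that is, the map induced by the unique simplicial map $S\to B(G)$); this finite chain model for $LM$ is used later when identifying nondegenerate Hochschild classes, so you may wish to record it as well.
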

\begin{proof}
This is Theorem 6.2 of \cite{Jones87}, a slightly different proof appears as Proposition \ref{loops}.  For future reference, though, we write down the map from the complex computing Hochschild homology given by taking the bimodule resolution of $A$ that we have constructed to the usual reduced bar resolution, so that we can actually write down the chains in $LM$ corresponding to chains in our given chain complex.
\smallbreak
It is enough, then, to construct a quasi-isomorphism from $\tilde{\Gamma}(M)\otimes_{A^e} A$ to $C(A)$.  This we do as follows: there is a natural isomorphism
\[
(A\otimes 1_{0}\otimes A)\otimes_{A^e} A \cong A.
\]
To deal with the other components $(A\otimes 1_s\otimes A)\otimes_{A^e} A$ appearing in the complex $\tilde{\Gamma}(M)\otimes_{A^e} A$, it is enough to pick an element of $A^{\otimes i}$ for each $i$-dimensional simplex $s$.  There is a homotopy equivalence $f:S\rightarrow B(G)$, defined uniquely up to homotopy.  Now such a map is determined uniquely by what it does to the nondegenerate simplices of $S$, and a map of simplicial sets to $B(G)$ is determined uniquely by where it sends the $1$-simplices of the preimage.  We deduce that in fact $f$ is uniquely defined, since it must send 1-simplices of $S$ to the 1-simplices of $B(G)$ they are labelled by.  So we use the assignment
\[
1_{s}\rightarrow \zeta_0(s)\otimes...\otimes \zeta_{i-1}(s).
\]
This is a quasi-isomorphism since the inclusion $S\rightarrow B(\pi_1(M))$ is a homotopy equivalence.  It follows that we have constructed a chain model for $LM$.
\end{proof}
\begin{prop}
\label{flip}
The flip map $\beta:A \otimes_{A^e} A\rightarrow A\otimes_{A^e} A$ induces an isomorphism in Hochschild homology.
\end{prop}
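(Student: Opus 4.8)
The plan is to transport the question to topology via the chain-level model $C(A)\simeq\Ch_*(LM)$ of Theorem~\ref{Jonloop}, and to show that under this identification $\beta$ is carried, up to chain homotopy, to the self-map of $LM$ that reverses the direction of loops, $\gamma(t)\mapsto\gamma(1-t)$. Since that self-map is a homeomorphism --- in fact an involution --- it induces an isomorphism on $\Ho_*(LM)$, and hence $\beta$ induces an isomorphism on $\HH_*(A)$.

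To carry this out I would first record an explicit chain-level representative of $\beta$: exchanging the two tensor factors of $A\otimes_{A^e}A$ is, on the normalized bar complex, the composite of the standard reversal isomorphism $\HH_*(A)\xrightarrow{\sim}\HH_*(A^{\op})$ --- given by $a_0\otimes\overline{a_1}\otimes\cdots\otimes\overline{a_n}\mapsto\pm\,a_0\otimes\overline{a_n}\otimes\cdots\otimes\overline{a_1}$ with the appropriate Koszul sign --- with the canonical isomorphism $A^{\op}\cong A$, which for $A=k[\pi_1(M)]$ is induced by $g\mapsto g^{-1}$. Thus $\beta$ becomes an explicit ``reverse the word and invert the letters'' operation on $C(k[\pi_1(M)])$. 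I would then use the simplicial model $\Map(\Ss^1,B(\pi_1(M)))$ for $LM$ appearing in the proofs of Proposition~\ref{loops} and Theorem~\ref{Jonloop}: reversing the orientation of the standard circle $\Ss^1$ (one nondegenerate $0$-simplex, one nondegenerate $1$-simplex) is a simplicial self-map that realizes to loop reversal on $LM$, and chasing the identification of $\Ch_*(\Map(\Ss^1,B(\pi_1(M))))$ with the Hochschild complex spelled out in those proofs shows that it acts by exactly this reverse-and-invert operation --- the change of conjugating vertex and the signs match once one accounts for the shift between the homological grading on $\Ch_*(LM)$ and the cohomological grading on the algebraic side. Therefore $\beta$ and loop reversal agree on homology, and the proposition follows.

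The step I expect to be the real work is the last comparison: matching the algebraic flip on the bar complex with the simplicial orientation-reversal of $\Ss^1$ letter by letter, including the sign bookkeeping forced by the grading convention. It is worth noting that a much softer argument handles the bare statement --- $\beta$ is an \emph{automorphism} of the object $A\otimes_{A^e}A$ in the derived category of $k$, since the two tensor factors are interchangeable via the canonical isomorphism $A^e\cong(A^e)^{\op}$, and an automorphism of an object induces an isomorphism on its homology. The explicit loop-reversal description is the useful one, however: loop reversal fixes the subspace $M\subset LM$ of constant loops pointwise, so $\beta$ acts as the identity on the image of $\Ho_*(M)$ in $\HH_*(A)$, which is precisely what is needed later to deduce automatic self-duality of Calabi-Yau structures on $k[\pi_1(M)]$.
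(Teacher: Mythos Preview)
Your soft argument in the last paragraph is fine and already proves the proposition as literally stated: $\beta$ is an involution of the derived object $A\otimes_{A^e}A$, hence an automorphism, hence an isomorphism on homology. No topology needed.

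The geometric identification you build the rest of the argument on, however, is not correct. The flip $\beta$ does \emph{not} correspond to loop reversal $\gamma(t)\mapsto\gamma(1-t)$; it corresponds to the antipodal map on $\Ss^1$, i.e.\ rotation of loops by $\pi$. You can see this already on $\HH_0$: there $A\otimes_{A^e}A\cong A/[A,A]$ via $a\otimes b\mapsto ab$, and the swap sends $ab$ to $ba$, which is the same class, so $\beta=\mathrm{id}$ on $\HH_0$. By contrast, your ``reverse and invert'' recipe sends the class of $g$ to the class of $g^{-1}$, which is generically a different conjugacy class (take $G=\mathbb{Z}$). More conceptually, if you resolve \emph{both} copies of $A$ by the bar resolution, an element of $B(A)\otimes_{A^e}B(A)$ is a cyclic word $(a_1,\ldots,a_m\,|\,b_1,\ldots,b_n)$ read around a circle, and swapping the two blocks is a cyclic rotation, not a reflection. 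This is exactly the paper's picture: under $C(A)\simeq\Ch_*(LM)$ the flip becomes the self-map of $LM$ induced by the antipodal map on $\Ss^1$, which is homotopic to the identity through the $\Ss^1$-action. That gives the stronger conclusion $\Ho(\beta)=\mathrm{id}$ on all of $\HH_*(A)$, matching Lemma~\ref{trivflip}, not merely on the constant-loop component as your final remark suggests.

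So: the proposition is established either way, but the map you wrote down is the wrong one, and the payoff you claim at the end (identity only on $\Ho_*(M)\subset\HH_*(A)$) undersells what is true and what is actually used later.
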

\begin{proof}
This follows from Michel Van den Bergh's observation (Lemma \ref{trivflip}).  We include it because the statement in this special case has a nice geometric meaning.  It amounts to the statement that the map on $\Ho_*(LM)$ induced by the antipodal map on $\Ss^1$ is the identity.  This follows from the fact that the antipodal map on $\Ss^1$ is homotopic to the identity map.
\end{proof}
\smallbreak
There is a natural map 
\[
\bp:LM\rightarrow M
\]
taking a free loop to its basepoint.  
\smallbreak
We take a little detour, to consider the interplay with the formal geometry around the trivial module.  We have a natural map.
\[
\res:A\otimes_{A^e} A\rightarrow k\otimes_A k
\]
This is given by a slight abuse of notation, namely we consider one copy of $k$ as a \textit{right} $A$-module, and the other as a \textit{left} $A$-module.
\begin{prop}
The following diagram commutes
\[
\xymatrix{
\HH_*(A)\ar[d]^{\res}\ar[r]^-{\cong} & \Ho_*(LM)\ar[d]^{\bp_*}\\
\Ho_*(k\otimes_A k)\ar[r]^-{\cong} &\Ho_*(M).
}
\]

\end{prop}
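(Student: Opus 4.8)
The plan is to verify commutativity directly on the explicit chain models constructed in the proofs of Theorems \ref{hfinite} and \ref{Jonloop}. Fix a finite simplicial complex $S$ with $|S|\simeq M$, collapsed along a maximal tree so that it has a single $0$-simplex (Lemma \ref{contsimp}), and use the free $A$-bimodule resolution $\overline{\Gamma}(S)=\bigoplus_{s\in S_{\red}}A\otimes k[\dim s]\otimes A\to A$. Applying $-\otimes_{A^e}A$ gives the Hochschild model $\overline{\Gamma}(S)\otimes_{A^e}A\cong\bigoplus_{s\in S_{\red}}A[\dim s]$, in which the $A$-factor records the holonomy of the loop around $s$; applying $k\otimes_A(-)\otimes_A k$ gives $\bigoplus_{s\in S_{\red}}k[\dim s]=\overline{\Ch}_*(S)$. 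Since $\overline{\Gamma}(S)\to A$ is a quasi-isomorphism of complexes of left-$A$-free modules onto the left-$A$-free module $A$, it splits as a complex of left $A$-modules, hence stays exact after $k\otimes_A(-)$; so $k\otimes_A\overline{\Gamma}(S)$ is a free right-$A$-resolution of $k$, and therefore $\overline{\Ch}_*(S)$ computes $k\otimes_A^L k$, which is $\Ho_*(M)$ because $\mathcal{U}S$ is contractible. This is the bottom isomorphism of the square; by construction, $\overline{\Ch}_*(S)$ computing $\Ho_*(M)$ is just simplicial homology of $S$ and $|S|\simeq M$.

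In these models the left vertical map $\res$ becomes the chain map $\bigoplus_s A[\dim s]\to\bigoplus_s k[\dim s]$ given by $a\otimes 1_s\mapsto\epsilon(a)\,1_s$, i.e.\ augmentation of the holonomy factor. For the right vertical map, recall from the proof of Proposition \ref{loops} that $\Ch_*(LM)$ is computed by the simplicial set $\Map(\Ss^1,B(\pi_1(M)))$, an $n$-simplex of which is a map $\Ss^1\times\Delta^n\to B(\pi_1(M))$ determined by a tuple $(a_0;a_1,\ldots,a_n)$ in $\pi_1(M)^{n+1}$, where $a_0$ labels the ``loop'' $1$-simplex and $a_1,\ldots,a_n$ label the edges of the $\Delta^n$-factor. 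The map $\bp$ is induced by the basepoint inclusion $\star\hookrightarrow\Ss^1$, so $\bp_*$ sends such a simplex to its restriction $\star\times\Delta^n\to B(\pi_1(M))$, which in these coordinates drops $a_0$ and keeps $(a_1,\ldots,a_n)$ — again augmentation of the holonomy factor. By the construction in the proof of Theorem \ref{Jonloop}, the top isomorphism identifies the holonomy $A$-factor with $a_0$ and the simplex $s$ with the edge word $(a_1,\ldots,a_n)$, compatibly with the $\pi_1(M)$-grading (equivalently, with the decomposition of $LM$ into path components). Hence both composites around the square are ``augment the holonomy, keep the simplex,'' and the diagram commutes.

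Conceptually this is the statement that, under the identifications $\HH_*(k[G])\cong\Ho_*(G;k[G^{\mathrm{ad}}])$ (group homology with coefficients in the adjoint representation) and $\Ho_*(k\otimes_{k[G]}^L k)\cong\Ho_*(G;k)=\Ho_*(BG)$, the map $\res$ is induced by the augmentation $k[G^{\mathrm{ad}}]\to k$ and $\bp_*$ is induced by the collapse of discrete $G$-sets $G^{\mathrm{ad}}\to\star$; these coincide because group homology is equivariant homology of discrete $G$-spaces. The main obstacle is the bookkeeping in the second paragraph: one must track the simplicial structure of $\Ss^1\times\Delta^n$ and the basepoint inclusion through the identifications of Proposition \ref{loops} and Theorem \ref{Jonloop} carefully enough to confirm that $\bp$ really is ``drop the loop variable'' at the chain level, and that the quasi-isomorphism $k\otimes_A\overline{\Gamma}(S)\to k$ carries the fundamental class of $M$ to exactly the class used in the bottom isomorphism — this is where contractibility of $\mathcal{U}S$ and Lemma \ref{contsimp} are used.
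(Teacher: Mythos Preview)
The paper states this proposition without proof; it is treated as an evident consequence of the explicit chain models set up in Proposition~\ref{loops} and Theorem~\ref{Jonloop}, and the narrative simply moves on to the next commutative square. Your argument is correct and supplies exactly the verification the paper omits: you identify both vertical maps, on the explicit models, with ``augment the holonomy factor, keep the simplex'', and observe that for basis elements $g\in G$ one has $\epsilon(g)=1$, so dropping the loop coordinate under $\bp_*$ agrees with applying $\epsilon$ under $\res$. The only point worth tightening is the sentence about $\overline{\Gamma}(S)\to A$ splitting as left $A$-modules: what you actually need (and use) is that the cone is a bounded-below acyclic complex of projective left $A$-modules, hence contractible, so that $k\otimes_A(-)$ preserves the quasi-isomorphism; the word ``splits'' is slightly imprecise but the conclusion is right. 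Your final conceptual paragraph (identifying both maps with the augmentation $k[G^{\mathrm{ad}}]\to k$ in group homology) is an efficient way to summarise why the square commutes and would serve as a one-line proof on its own.
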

Now the following diagram also commutes
\[
\xymatrix{
A\otimes_{A^e} A\ar[d]\ar[r] &k\otimes_A k\ar[d]\\
\hat{A}\otimes_{\hat{A}^e} \hat{A}\ar[r]&k\otimes_{\hat{A}} k,
}
\]
and so we deduce from Lemma 11.1.2 of \cite{VdB10} that a class $\alpha\in \HH_d(A)\cong \Ho_{d}(LM)$ is nondegenerate only if $\bp_*(\alpha)$ is nonzero.  The rest of this section will be devoted to a `global' version of this result.
\medbreak
We now come back to the main thread of the paper, and to the characterisation of nondegenerate classes in $\HH_{d}(A)$.  Recall that the centre of a $k$-algebra $A$ acts on the Hochschild homology of $A$ by acting on the first copy of $A$ in $A\otimes_{A^e} A$.
\begin{prop}
\label{CYthm}
A class $\eta\in \HH_{d}(A)$ is nondegenerate if and only if it is given by $z\cdot \const_*([M])$, for $z$ a central unit of $A$.  In particular, $A$ is Calabi-Yau of dimension $d$.
\end{prop}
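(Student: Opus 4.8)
The plan is to combine a purely algebraic rigidity statement --- that nondegenerate classes, once one exists, form a principal homogeneous space under the group of central units of $A$ --- with the single geometric input that $\const_*([M])$ is itself nondegenerate. Granting the latter, the ``only if'' and ``if'' directions, and the final sentence, all follow formally, so the real content is the nondegeneracy of $\const_*([M])$, and that is where Poincaré duality for $M$ enters.

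\emph{The algebraic part.} Since $A$ is homologically finite (Theorem \ref{hfinite}) we may compute $\Hom_{\DD(A^e)}(A,A)$ with the finite bimodule resolution $\Gamma(S)$; this complex sits in nonnegative cohomological degrees and its degree-zero cohomology is $\Hom_{A^e}(A,A)=Z(A)$, so $\Hom_{\DD(A^e)}(A,A)=Z(A)$ and hence $\operatorname{Aut}_{\DD(A^e)}(A)=Z(A)^{\times}$, the group of central units of $A$. Under the identifications $\HH_d(A)\cong\Ext^{-d}_{A^e}(A^{\vee},A)\cong\Hom_{\DD(A^e)}(A^{\vee}[d],A)$, post-composing a morphism $A^{\vee}[d]\to A$ with the automorphism of $A$ given by a central unit $z$ corresponds (using Lemma \ref{trivflip} to ignore the flip) to letting $z$ act on $\HH_d(A)=A\otimes_{A^e}A$ in the first factor, i.e.\ to the central action recalled just before the statement. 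Consequently, if one nondegenerate class $\eta_0$ exists, then $\eta\mapsto\eta^+\circ(\eta_0^+)^{-1}$ identifies the set of nondegenerate classes with $Z(A)^{\times}$: every nondegenerate class is $z\cdot\eta_0$ for a unique central unit $z$, and conversely $z\cdot\eta_0$ is nondegenerate for every $z\in Z(A)^{\times}$. Self-duality of each such class is automatic by Lemma \ref{trivflip} (cf.\ Proposition \ref{flip}).

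\emph{The geometric part.} It remains to prove that $\const_*([M])$ is nondegenerate, which will also give the final sentence. Applying $\RHom_{A^e}(-,A^e)$ to the resolution $\Gamma(S)$ (or its reduced form $\overline{\Gamma}(S)$) of Theorem \ref{hfinite} produces a finite complex of finitely generated free bimodules computing $A^{\vee}$, whose $\pi_1(M)$-graded pieces are --- via the isomorphisms $\mu$, $\overline{\mu}$ used there --- the compactly supported (normalized) cochain complexes of the universal cover $\mathcal{U}S$. Since $M$ is a closed oriented $d$-manifold, classical equivariant Poincaré duality --- concretely, cap product with a fundamental cycle, which represents a $\pi_1(M)$-invariant class because $M$ is orientable --- provides a $\pi_1(M)$-equivariant chain homotopy equivalence from this compactly supported cochain complex, shifted by $d$, to $\Ch_*(\mathcal{U}S)$; reassembling over all grades yields an isomorphism $A^{\vee}[d]\to A$ in $\DD(A^e)$, so that $A$ is Calabi-Yau of dimension $d$. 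It then only remains to recognise the Hochschild class of this isomorphism: one traces the fundamental cycle of $S$ through the explicit chain-level identification $C(A)\simeq\Ch_*(LM)$ of Theorem \ref{Jonloop} --- under which a nondegenerate simplex $s$ goes to $\zeta_0(s)\otimes\cdots\otimes\zeta_{\dim s-1}(s)$ --- and through the homotopy equivalence $\const\colon M\xrightarrow{\sim}(LM)_0$ of Proposition \ref{hequivalence}, checking that the resulting cycle lies in the summand of $\HH_d(A)$ indexed by the conjugacy class of the identity (the component of constant loops), and that there it represents $\const_*([M])$ up to a nonzero scalar. As scalars are central units, the algebraic part then forces $\const_*([M])$ itself to be nondegenerate.

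\emph{Main obstacle.} The delicate step is the last one: confirming that the Poincaré duality isomorphism $A^{\vee}[d]\to A$, read as an element of $\HH_d(A)\cong\Ho_d(LM)$, is genuinely (a nonzero multiple of) $\const_*([M])$ rather than a non-unit central multiple of some other nondegenerate class. This amounts to matching the chain-level cap product with the fundamental cycle against the explicit loop-space chain model, and in particular to seeing that it is supported on the identity-conjugacy-class summand; along the way one must keep track of orientability (so that the duality twist is trivial) and use Lemma \ref{contsimp}, together with the finiteness of the contracted maximal tree, to ensure that passing to $S_{c(T)}$ does not affect the compactly supported cohomology and hence the duality. The remaining points --- that $A$ has no negative self-$\Ext$ over $A^e$, and compatibility of the two central actions modulo the flip --- are routine.
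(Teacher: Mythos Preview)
Your proposal is correct and follows essentially the same approach as the paper: both establish nondegeneracy of $\const_*([M])$ by dualising the resolution $\Gamma(S)$, identifying the graded pieces with compactly supported cochains on the universal cover, invoking Poincar\'e duality (with orientability ensuring the map is a genuine bimodule map), then using the $\pi_1(M)$-grading together with Proposition~\ref{hequivalence} to pin the resulting class to a scalar multiple of $\const_*([M])$; and both reduce the classification of nondegenerate classes to bimodule automorphisms of $A$, hence to central units. Your treatment is slightly more careful in two places --- you explicitly verify $\operatorname{Aut}_{\DD(A^e)}(A)=Z(A)^{\times}$ via the vanishing of negative self-$\Ext$, and you spell out the principal homogeneous space structure giving both directions --- whereas the paper simply asserts that a bimodule isomorphism $A\to A$ is multiplication by a central unit ``since $A$ is unital''; but this is a matter of exposition, not of strategy.
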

\begin{proof}
The result will follow from showing that $\const_*([M])$ is a nondegenerate class in $\HH_{d}(A)$.  Consider again the complex $\Gamma(S)$ of Theorem \ref{hfinite}.  Since the compliment of $\Gamma(S)$ in $\tilde{\Gamma}(S)$ is contractible, one obtains a quasi-isomorphism of chain complexes
\[
\varsigma:\Hom_{A^e}(\Gamma(S),A^e)_{(g)}\rightarrow \Ch_{\cpct}^*(\mathcal{U}S).
\]
Now the complex on the right calculates the cohomology with compact support of $\mathcal{U}S$.  From Lemma \ref{contsimp} we deduce that this has nontrivial homology only in degree $d$, and we deduce that $\Gamma(S)^{\vee}[d]$ is quasi-isomorphic to $A$ (using Poincar\'{e} duality).  Precisely, this quasi-isomorphism is given by sending the natural generator of the degree $d$ cohomology of the above complex to $g\in A$.  Note that this defines a bimodule map precisely in the case in which $M$ is orientable.  It is straightforward to check that this is in fact a bimodule quasi-isomorphism.  This establishes that $A$ is Calabi-Yau.\smallbreak
Recall that $\HH_*(A)$ has a decomposition indexed by conjugacy classes in $\pi_1(M)$.  We have chosen our isomorphism $A^{\vee}[d]\rightarrow A$ to be a \textit{graded} isomorphism.  It follows that our nondegenerate class lies in the component $\Ho_*((LM)_0)$.  Since it is nondegenerate, we deduce from Proposition \ref{hequivalence} that our nondegenerate class is some scalar multiple of $\const_*([M])$ (this can also be seen directly, by playing with chain models - see Proposition \ref{Jonloop}).
\smallbreak
Let $\eta\in \HH_{d}(A)$ be a different nondegenerate class.  Then we have a bimodule isomorphism
\[
(\const_*([M])^+)^{-1}\circ (\eta^+):A\rightarrow A.
\]
(See Definition \ref{plusnot} for the notation $-^+$.)  This is necessarily given by multiplication by a central element $z$ (since $A$ is unital).  This is the $z$ appearing in the first statement of the theorem.

\end{proof}
\begin{examp}
\label{nono}
The above proof works for non-orientable manifolds, if we allow $\mathrm{char}(k)=2$.  Let us consider the example given by setting our manifold $M$ to be the Klein bottle.  In this case $k[\pi_1(M)]=k\langle a^{\pm 1},b^{\pm 1}\rangle/(aba-b)$, and splicing the noncommutative cotangent complex with the surjection given by multiplication, we obtain the exact sequence
\[
\xymatrix{
0\ar[r] &(aba-b)/(aba-b)^2\ar[r]^-{\rho} &A\otimes_F\Omega_k^1 F\otimes_F A\ar[r] &A\otimes A\rightarrow A\ar[r] &0
}
\]
where $F=k\langle a^{\pm 1},b^{\pm 1}\rangle$.  It is not too hard to show that $(aba-b)/(aba-b)^2$ is isomorphic to the free bimodule.  So we see that $A$ is homologically finite.  We can calculate $\Ext^{2}_{A\mathrm{-bimod}}(A,A\otimes A)$ using this sequence, and we see that it is given by the bimodule $\Hom_{A\mathrm{-bimod}}(A\otimes A,A\otimes A)$, where the bimodule structure is given as ever by the inner bimodule structure on the target bimodule, modulo the homomorphisms factoring through $\rho$.  This bimodule is of course given by $A\otimes A$ itself, with the inner bimodule structure, modulo the relations $ara=r$ and $rba=-abr$.  One easily confirms, then, that a basis for this bimodule is given by elements of the form $r\otimes 1$, for $r\in A$.  $A$ acts on the right via $(r\otimes 1)s=rs\otimes 1$, for all $r,s\in A$, and on the left via $a(r\otimes 1)=a^{-1}r\otimes 1$ and $b(r\otimes 1)=-br\otimes 1$.  One can write an arbitrary element of $A$ in the form $bc$, for $c\in A$, and in terms of this presentation, we have $abc=ba^{-1}c$, from which one deduces that if $\mathrm{char}(k)=2$ the bimodule $\Ext^{2}_{A\mathrm{-bimod}}(A,A\otimes A)$ is isomorphic to the diagonal bimodule, with isomorphism given by the change of basis matrix provided by left multiplication by $b$.  Now if $\mathrm{char}(k)\neq 2$ note that $1\otimes 1$ is given by $1/2((b^{-1}\otimes 1)\cdot b - b\cdot(b^{-1}\otimes 1))=1\otimes 1$, from which we deduce that the bimodule $M$ generated by elements of the form $rm-mr$, for $r\in A$ and $m\in \Ext^{2}_{A\mathrm{-bimod}}(A,A\otimes A)$ is the whole of $\Ext^{2}_{A\mathrm{-bimod}}(A,A\otimes A)$.  So clearly $\Ext^{2}_{A\mathrm{-bimod}}(A,A\otimes A)/M$ is not isomorphic to $A_{\mathrm{ab}}$, the Abelianization of $A$, considered as an $A$-bimodule, and we deduce that $\Ext^{2}_{A\mathrm{-bimod}}(A,A\otimes A)$ is not the diagonal bimodule, and $A$ is not Calabi-Yau.
\end{examp}


\section{Main results}
\subsection{A topological obstruction}
To obtain topological obstructions to being able to write group algebras as superpotential algebras, or indeed GDGAs, we need a topological description of cyclic homology.  This is given by
\begin{thm}[\cite{Jones87} Theorem B]
Let $A$ be the fundamental group algebra for an acyclic manifold $M$.  There is an isomorphism of graded vector spaces
\[
\HC_{*}(A)\cong \Ho_{*}^{\Ss^1}(LM).
\]
Furthermore, this induces an isomorphism between long exact sequences
\begin{equation}
\label{throw}
\xymatrix{
\ar[r]&\HC_{n}(A)\ar[r]\ar[d] &\HC_{n-2}(A)\ar[r]\ar[d] &\HH_{n-1}(A)\ar[r]\ar[d] &\HC_{n-1}(A)\ar[d]\ar[r]&\\
\ar[r]&\Ho^{\Ss^1}_n(LM)\ar[r]&\Ho^{\Ss^1}_{n-2}(LM)\ar[r]^B&\Ho_{n-1}(LM)\ar[r]&\Ho^{\Ss^1}_{n-1}(LM)\ar[r]&
}
\end{equation}
\end{thm}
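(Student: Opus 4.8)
The plan is to upgrade the chain-level comparisons already established in Sections 2 and 3 to comparisons of \emph{mixed complexes}, and then to pass to cyclic homology; this is essentially Jones's argument specialized to a $K(\pi,1)$, and the work is mostly the reconciliation of conventions. \textbf{Step 1 (the cyclic model of the loop space).} For a discrete group $G$ let $B^{\mathrm{cyc}}(G)$ be the cyclic bar construction: the cyclic set $\textbf{n}\mapsto G^{n+1}$ with the usual simplicial faces and degeneracies together with the cyclic operators $t_n(g_0,\ldots,g_n)=(g_n,g_0,\ldots,g_{n-1})$. Its geometric realization is naturally $\Ss^1$-equivalent to $L|B(G)|=LBG$ with its rotation action; this is the cyclic set already implicitly used in Proposition \ref{loops}, where $\Map(\Ss^1,B(G))$ with the minimal simplicial circle was shown to carry exactly the combinatorics of the Hochschild complex of $k[G]$. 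Since the realization of a cyclic set carries a canonical $\Ss^1$-action, and since the normalized chains on a cyclic set $Z$ compute both $\Ho_*(|Z|)$ and — via the $\Ss^\infty\times Z$ chain model of \cite{Jones87} recalled in Section 3 — the equivariant homology $\Ho^{\Ss^1}_*(|Z|)$, it suffices to match $k[B^{\mathrm{cyc}}(G)]$ with the mixed complex $(\overline{C}(k[G]),\overline b,\overline B)$.

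\textbf{Step 2 (upgrading the comparison).} The chain complex underlying $k[B^{\mathrm{cyc}}(G)]$ is, on the nose, the cyclic bar complex $C(k[G])$: the simplicial boundary is the Hochschild differential $b$, and Connes's operator $B$ in the form given in Section 2.2 is exactly the norm-type operator assembled from the cyclic operators $t_n$. Passing to the normalized quotient — which topologically is the passage to nondegenerate simplices and algebraically is $\overline{C}(k[G])$ — the chain isomorphism of Proposition \ref{loops} therefore intertwines the two Connes operators, so it is an isomorphism of mixed complexes
\[
(\overline{C}(k[G]),\overline b,\overline B)\;\xrightarrow{\ \sim\ }\;\bigl(\No_*(B^{\mathrm{cyc}}(G)),\,d,\,B_{\Ss^1}\bigr),
\]
where $B_{\Ss^1}$ is the operator induced by the rotation action. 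Taking $G=\pi_1(M)$ and using $BG\simeq M$ for $M$ acyclic, together with the fact that the free-loop-space functor preserves homotopy equivalences $\Ss^1$-equivariantly (Example \ref{freeloopex} and functoriality of the $\Ss^1$-action), gives $LBG\simeq LM$ as $\Ss^1$-spaces.

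\textbf{Step 3 (passing to derived invariants and matching the long exact sequences).} A quasi-isomorphism of mixed complexes induces isomorphisms on Hochschild homology, on cyclic homology, and — since the Connes long exact sequence $\cdots\to\HC_n\to\HC_{n-2}\xrightarrow{B}\HH_{n-1}\to\HC_{n-1}\to\cdots$ is functorial in the mixed complex — on that entire sequence. Applied to Step 2 this yields $\HH_*(A)\cong\Ho_*(LM)$ (recovering Proposition \ref{loops}) and $\HC_*(A)\cong\Ho^{\Ss^1}_*(LM)$, compatibly with the connecting maps. Finally, the Connes long exact sequence of the cyclic set $B^{\mathrm{cyc}}(\pi_1(M))$ must be identified with the Gysin-type sequence of Section 3; but that is precisely the content of the chain models for $\Ho_*$ and $\Ho^{\Ss^1}_*$ attributed there to \cite{Jones87}, in which the map $B$ is realized by the $\Ss^1$-action. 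Assembling these identifications produces the commuting ladder (\ref{throw}).

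\textbf{The main obstacle} is Step 2: one must check that the extra cyclic structure really is transported correctly, i.e.\ that the cyclic operator $t_n$ on the bar side corresponds to rotation of $\Ss^1$ on the topological side under the comparison, and (if one insists on using the explicit bimodule model $\Gamma(S)$ of Theorem \ref{hfinite} and Theorem \ref{Jonloop} rather than the cyclic bar construction directly) that the auxiliary quasi-isomorphisms — tree contraction, the passage from $\tilde\Gamma(S)$ to $\Gamma(S)$, and comparison with the bar resolution — are all morphisms of mixed complexes, not merely of complexes. The cleanest route is to work with $B^{\mathrm{cyc}}(\pi_1(M))$ throughout and invoke the classical $\Ss^1$-equivalence $|B^{\mathrm{cyc}}(\pi_1(M))|\simeq LM$ (Burghelea--Fiedorowicz, Goodwillie), after which the statement is genuinely the specialization of \cite{Jones87} Theorem B to the manifold $M\simeq B\pi_1(M)$.
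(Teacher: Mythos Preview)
Your proposal is correct and follows essentially the same route as the paper's own proof, which is itself little more than a citation: the paper says one ``uses the explicit chain models for equivariant and ordinary homology of the loop space constructed [in Jones], and then mimics the usual construction of the Connes long exact sequence,'' and your mixed-complex/cyclic-set argument is precisely a faithful unpacking of that sentence. If anything, you have supplied more detail than the paper does; your identification of the cyclic bar construction $B^{\mathrm{cyc}}(\pi_1(M))$ with the Hochschild mixed complex, and the observation that the Connes sequence and the Gysin sequence are both the long exact sequence of the same mixed complex, is exactly the content being gestured at.
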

\begin{proof}
This is essentially done in \cite{Jones87}.  One uses the explicit chain models for equivariant and ordinary homology of the loop space constructed there, and then mimics the usual construction of the Connes long exact sequence.
\end{proof}
Recall that the bottom row of (\ref{throw}) is obtained from the Gysin long exact sequence (\ref{Gysin}) for the fibration
\[
\Ss^1\rightarrow \Ss^{\infty}\times LM\rightarrow \Ss^{\infty}\times^{\Ss^1} LM.
\]
One deduces that $B$ is given as follows: let $\alpha\in \Ho_{n-1}(\Ss^{\infty}\times^{\Ss^1} LM)$ be an equivariant homology class.  Then since $\Ss^{\infty}\times^{\Ss^1} LM$ is an orbit space, we obtain a class $\overline{\alpha}$ in $\Ho_n(\Ss^{\infty}\times LM)$.  Finally we have that $B\alpha=p_*(\overline{\alpha})$, where 
\[
p:\Ss^{\infty}\times LM\rightarrow LM
\]
is the natural projection.  
\medbreak
Note that the path components of $LM$ are in bijective correspondence with conjugacy classes in $\pi_1(M)$.  If $c$ is a conjugacy class in $\pi_1(M)$ we define $(LM)_c$ to be the component corresponding to $c$.
\begin{rem}
\label{coverings}
For an arbitrary $k$-algebra there is a natural contraction pairing $\HH^s(A)\times \HH_t(A)\rightarrow \HH_{t-s}(A)$, where $\HH^*(A)$ is the Hochschild cohomology of $A$.  In the present case we will only be interested in the action of $\HH^0(k[\pi_1(M)])=\Zz(k[\pi_1(M)])$.  For example, let $z$ be a central element of $k[\pi_1(M)]$, of the form
\[
z=\sum_{i\in I} g_i
\]
where the $g_i$ are all conjugate.  We will be interested in calculating $\const_{*}([M])\cdot z$.  We can of course express this in terms of our chain model for the loop space, but it is instructive to see what this is geometrically.  Let $C_{\pi_1(M)}(g)$ be the centralizer of $g$.  Then there is a finite cover
\[
\xymatrix{
\tilde{M}\ar[d]\\
M,
}
\]
corresponding to the subgroup $C_{\pi_1(M)}(g)$ under the usual Galois correspondence for covering spaces.  Note that $g$ lifts to a central element of $\pi_1(\tilde{M})$.  It is possible to give a map $\tilde M\rightarrow (L\tilde{M})_{g}$, which is a section of 
\[
\tilde{\textbf{bp}}:L\tilde{M}\rightarrow \tilde{M}
\]
Composing this with the obvious map $L\tilde{M}\rightarrow LM$ we obtain the desired class in $\HH_*(k[\pi_1(M)])$.  
\end{rem}

\begin{thm}
Let $M$ be a $d$-dimensional compact acyclic manifold.  Then $k[\pi_1(M)]$ is exact Calabi-Yau if and only if there is a central unit $\chi$ in $k[\pi_1(M)]$, of the form
\[
\chi=\sum_{c\in C}a_c s_c
\]
where $C$ is a finite set of finite conjugacy classes of $G$, and $s_c:=\sum_{g\in c} g$, and for each $c\in C$ there is a class $\lambda_c\in \Ho_{d-1}^{\Ss^1}((LM)_c)$ such that $\bp_*(B(\lambda_c))\neq 0$.
\end{thm}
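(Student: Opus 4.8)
The plan is to transport the condition ``exact Calabi-Yau'' through Jones's identification (\ref{throw}) of the algebraic Connes sequence with the topological one, and then to exploit the fact that each path component $(LM)_c$ is, up to homotopy, a finite covering space of $M$.

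\textbf{The centre.} First I would record the elementary description of $\Zz(k[\pi_1(M)])$: writing $G=\pi_1(M)$, an element $\sum_{g\in G}a_g g$ is central precisely when $g\mapsto a_g$ is constant on conjugacy classes, so, being a finite sum, every central element has the form $\sum_{c\in C}a_c s_c$ for a finite set $C$ of \emph{finite} conjugacy classes; after deleting vanishing terms we may take all $a_c\neq 0$. In particular this applies to central units, which already forces the shape of $\chi$ appearing in the theorem.

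\textbf{Components of $LM$ as finite covers.} The key geometric input is that, for a conjugacy class $c$ with representative $g$, the component $(LM)_c$ is homotopy equivalent to $BC_G(g)$ and $\bp\colon (LM)_c\to M$ corresponds under this equivalence to the map of classifying spaces induced by $C_G(g)\hookrightarrow G$; equivalently, since $\bp$ has discrete homotopy fibre $\Omega M\simeq G$, the restriction $\bp\colon(LM)_c\to M$ is the connected covering $\tilde M_c\to M$ attached to the subgroup $C_G(g)$. When $c$ is finite this subgroup has index $n_c=|c|$, so $\tilde M_c$ is again a closed orientable $d$-manifold and $\Ho_d((LM)_c)\cong \Ho_d(\tilde M_c)\cong k$ is one-dimensional. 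Moreover, by the construction in Remark \ref{coverings}, $\kappa_c:=s_c\cdot\const_*([M])$ is the image of the fundamental class $[\tilde M_c]$ under the composite $\tilde M_c\simeq (L\tilde M_c)_g\to (LM)_c$ (here $g$ is central in $\pi_1(\tilde M_c)=C_G(g)$, so $(L\tilde M_c)_g\simeq B\pi_1(\tilde M_c)=\tilde M_c$). Hence $\kappa_c$ generates $\Ho_d((LM)_c)$ and $\bp_*(\kappa_c)=n_c\cdot[M]$ is nonzero in $\Ho_d(M)\cong k$, the last point because $\mathrm{char}\,k=0$; in particular $\bp_*$ is injective on $\Ho_d((LM)_c)$.

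\textbf{Assembling.} Because the $\Ss^1$-action on $LM$ preserves path components, (\ref{throw}) refines componentwise: $B\colon\Ho^{\Ss^1}_{d-1}(LM)\to\Ho_d(LM)$ is the direct sum over conjugacy classes of maps $B_c\colon\Ho^{\Ss^1}_{d-1}((LM)_c)\to\Ho_d((LM)_c)$. Combining this with Proposition \ref{CYthm} and the first step, and using that $\const_*([M])=\kappa_{c_0}$ for $c_0$ the trivial conjugacy class and that $\chi\cdot\const_*([M])=\sum_{c\in C}a_c\kappa_c$ by $k$-linearity of the $\Zz(k[G])$-action, one obtains: $k[\pi_1(M)]$ is exact Calabi-Yau if and only if there is a central unit $\chi=\sum_{c\in C}a_c s_c$ (all $a_c\neq0$, $C$ a finite set of finite conjugacy classes) with $\kappa_c\in\mathrm{Im}(B_c)$ for every $c\in C$. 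Finally, since $\Ho_d((LM)_c)$ is one-dimensional, spanned by $\kappa_c$, with $\bp_*(\kappa_c)\neq0$, for $\lambda_c\in\Ho^{\Ss^1}_{d-1}((LM)_c)$ one has $\bp_*(B_c\lambda_c)\neq0$ iff $B_c\lambda_c\neq0$ iff $B_c\lambda_c$ is a nonzero scalar multiple of $\kappa_c$ iff $\kappa_c\in\mathrm{Im}(B_c)$. Substituting this equivalence into the previous line yields exactly the statement of the theorem; the nondegeneracy needed in the ``if'' direction comes from Proposition \ref{CYthm} because $\chi$ is a central unit.

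I expect the main obstacle to be the middle step: pinning down that $(LM)_c$ is homotopy equivalent to the finite cover $\tilde M_c$, that $\bp$ becomes the covering projection, and that the class $\kappa_c$ of Remark \ref{coverings} is the fundamental class of $\tilde M_c$ (so that $\Ho_d((LM)_c)$ is one-dimensional with $\bp_*\kappa_c\neq 0$). Everything else is bookkeeping with the already-cited results.
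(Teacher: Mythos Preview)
Your argument is correct and follows essentially the same route as the paper's proof: both use Proposition~\ref{CYthm} to write any nondegenerate class as $\chi\cdot\const_*([M])$ for a central unit $\chi=\sum_c a_c s_c$, decompose along conjugacy classes, observe that $B$ respects this decomposition, and use that $\bp_*(s_c\cdot\const_*([M]))=|c|\,[M]$. The only real difference is emphasis: the paper reads off the last identity directly from the explicit chain model, whereas you derive it (and the one-dimensionality of $\Ho_d((LM)_c)$) from the covering-space description of $(LM)_c\simeq \tilde M_c$ sketched in Remark~\ref{coverings}. Your version makes the ``if'' direction more transparent, since you use that one-dimensionality to convert $\bp_*(B\lambda_c)\neq 0$ into $\kappa_c\in\mathrm{Im}(B_c)$; the paper's write-up is terser on this point and really only spells out the ``only if'' direction. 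The ``main obstacle'' you flag is genuine but mild: that $(LM)_c\simeq BC_G(g)$ with $\bp$ the covering map is standard for aspherical $M$, and that $\kappa_c$ corresponds to $[\tilde M_c]$ is exactly the content of Remark~\ref{coverings}.
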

\begin{proof}
The form given for a central unit in $k[\pi_1(G)]$ is clearly the form that any central element of $k[\pi_1(M)]$ must take.\smallbreak
Let $f:A^{\vee}[d]\rightarrow A$ be an isomorphism.  We have already an isomorphism $f':A^{\vee}[d]\rightarrow A$ given by the cycle $\const_*([M])\in \Ho_{d}(LM)$, by Proposition \ref{CYthm}.  Therefore $f\circ f'^{-1}$ is a bimodule isomorphism $A\rightarrow A$, and so it must be given by multiplication by a central unit.  Let $\gamma_c\in \Ho_{d}(LM)$ be an irreducible cycle.  This forces $\gamma_c=\const_*([M])\cdot a_c s_c$ for some $a_c$.  Finally, again from the explicit chain model, one reads off that
\[
\bp_*(\const_*([M])\cdot a_c s_c)=|c| a_c [M]
\]
where $|c|$ is the number of elements of $c$.  Now $\Ho^{\Ss^1}_*(LM)$ comes also with a decomposition indexed by conjugacy classes of $\pi_1(M)$, and the map $B$ respects this decomposition.  We deduce that there must exist a $\lambda_c$ as in the statement of the theorem.
\end{proof}
\begin{rem}
In the present context, there is a slight ambiguity in the phrase `nontrivial unit'.  Group theorists refer to all elements $g\in k[G]$ of a group ring as trivial, while here we only consider $1\in k[G]$ as trivial.
\end{rem}
\begin{thm}
\label{biggun}
Let $M$ be a $d$-dimensional compact acyclic orientable manifold.  Then $k[\pi_1(M)]$ is an exact Calabi-Yau algebra only if it contains a nontrivial central unit.
\end{thm}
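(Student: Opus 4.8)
The plan is to argue by contraposition, feeding the characterisation in the preceding theorem into Proposition \ref{hequivalence}. Assume $k[\pi_1(M)]$ is exact Calabi-Yau. By the preceding theorem there is a central unit $\chi=\sum_{c\in C}a_c s_c$ with all $a_c\neq 0$, $C$ a finite set of finite conjugacy classes of $\pi_1(M)$, such that for every $c\in C$ there is a class $\lambda_c\in\Ho^{\Ss^1}_{d-1}((LM)_c)$ with $\bp_*(B(\lambda_c))\neq 0$. The elements $s_c$ attached to distinct conjugacy classes are sums over disjoint subsets of the basis $\{g\mid g\in\pi_1(M)\}$ of $k[\pi_1(M)]$, so this expansion of $\chi$ is unique; in particular $\chi$ is trivial (a scalar multiple of $1$) precisely when $C=\{c_0\}$, $c_0=\{e\}$ being the conjugacy class of the identity. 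It therefore suffices to rule out $C=\{c_0\}$, i.e. to show that there is no $\lambda\in\Ho^{\Ss^1}_{d-1}((LM)_{c_0})$ with $\bp_*(B(\lambda))\neq 0$.

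Here $(LM)_{c_0}$ is the component $(LM)_0$ of nullhomotopic loops, so Proposition \ref{hequivalence} says the $\Ss^1$-equivariant inclusion $\const\colon M\to (LM)_0$ (with $M$ carrying the trivial $\Ss^1$-action) induces an isomorphism $\const^{\Ss^1}_*\colon\Ho^{\Ss^1}_*(M)\to\Ho^{\Ss^1}_*((LM)_0)$; thus any such $\lambda$ equals $\const^{\Ss^1}_*(\mu)$ for some $\mu\in\Ho^{\Ss^1}_{d-1}(M)$. From the formula $B\alpha=p_*(\overline\alpha)$ recalled after (\ref{throw}), the operator $B$ is manifestly natural for $\Ss^1$-equivariant maps, so $B(\const^{\Ss^1}_*\mu)=\const_*(B^M\mu)$, where $B^M\colon\Ho^{\Ss^1}_{d-1}(M)\to\Ho_d(M)$ is the corresponding operator for the trivial $\Ss^1$-action on $M$. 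Since $\bp\circ\const=\mathrm{id}_M$, this gives $\bp_*(B(\lambda))=B^M(\mu)$, and it remains only to check that $B^M=0$.

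For the trivial $\Ss^1$-action the Borel construction of $M$ is $\Cp\mathbf{P}^{\infty}\times M$ and the circle bundle $\Ss^1\to\Ss^{\infty}\times M\to\Cp\mathbf{P}^{\infty}\times M$ is pulled back from $\Ss^1\to\Ss^{\infty}\to\Cp\mathbf{P}^{\infty}$ along the projection to $\Cp\mathbf{P}^{\infty}$. Hence the bundle projection $\Ss^{\infty}\times M\to\Cp\mathbf{P}^{\infty}\times M$ induces on homology, by the Künneth formula, the inclusion $\Ho_*(M)=\Ho_0(\Cp\mathbf{P}^{\infty})\otimes\Ho_*(M)\hookrightarrow\Ho_*(\Cp\mathbf{P}^{\infty})\otimes\Ho_*(M)$, which is injective in every degree. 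In the Gysin sequence (\ref{Gysin}) for this bundle the lift map $\overline{\,\cdot\,}$ has image equal to the kernel of this projection, hence is zero, so $B^M=p_*\circ\overline{\,\cdot\,}=0$. Therefore $\bp_*(B(\lambda))=0$ for every $\lambda\in\Ho^{\Ss^1}_{d-1}((LM)_{c_0})$, which is incompatible with the condition on $\chi$ if $C=\{c_0\}$. So $C$ must contain a non-identity conjugacy class, $\chi$ is a nontrivial central unit, and the theorem follows.

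The uniqueness of the $s_c$-expansion and the Künneth computation are routine. The step that needs care is the one in the second paragraph: one should confirm that the isomorphisms in (\ref{throw}) identify the algebraic boundary map $B$ with the topological operator built from the Gysin sequence, and are compatible with the decompositions of $\HH_*$ and $\HC_*$ indexed by conjugacy classes, so that membership in the image of $B$ may be tested one component at a time and, on the component of the identity, transported across $\const$ to the trivial $\Ss^1$-action on $M$. I expect this naturality bookkeeping to be the main, if mild, obstacle, and it should follow from the explicit chain models of \cite{Jones87}.
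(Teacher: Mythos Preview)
Your proof is correct and follows essentially the same route as the paper: both reduce, via Proposition~\ref{hequivalence}, to showing that the Connes/Gysin $B$-map vanishes on the identity component $(LM)_0$, and hence that the ``trivial'' central unit cannot furnish an exact Calabi--Yau structure. The only cosmetic differences are that you invoke the preceding theorem as a black box (whereas the paper argues directly that $\const_*([M])$ is not exact and then appeals to Proposition~\ref{CYthm}), and that you establish $B^M=0$ via K\"unneth and exactness of the Gysin sequence, while the paper uses a support argument; your version is arguably the cleaner of the two.
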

\begin{proof}
This will follow from the fact (which we now prove) that $\const_*([M])$ is not an exact Calabi-Yau structure, and the fact that any two Calabi-Yau structures differ by a central unit.  Recall Proposition \ref{hequivalence}, which states that
\[
\const:M\rightarrow (LM)_0
\]
is a homotopy equivalence, which is also $\Ss^1$-equivariant after giving $M$ the trivial action, and so it induces an isomorphism
\[
\const^{\Ss^1}_*:\Ho_{*}^{\Ss^1}(M)\rightarrow \Ho_{*}^{\Ss^1}((LM)_0).
\]
By construction there is an isomorphism
\[
\Ho_{*}^{\Ss^1}(M)\cong \Ho_*(M\times \Cp \mathbb{P}^{\infty}).
\]
Composing these isomorphisms with the map $B$ we obtain a map
\[
\Gamma:\Ho_{d-1}(M\times \Cp \mathbb{P}^{\infty})\rightarrow \Ho_{d}(M)
\]
taking a cycle $a\in \Ho_{d-1}(M\times \Cp \mathbb{P}^{\infty})$ to a cycle $b$ in $\Ho_{d}(M)$ whose support is contained in the support of $p_*(a)$, where 
\[
p:M\times \Cp \mathbb{P}^{\infty}\rightarrow M
\]
is the natural projection.  We deduce that $b$ is zero, and so in particular, $\const_*([M])$ is not an exact Calabi-Yau structure.  It follows that the central unit one obtains by comparing an isomorphism $f:A^{\vee}[d]\rightarrow A$ obtained from an exact Calabi-Yau structure on $A$ with the one obtained from $\const_{*}([M])$ must be nontrivial.
\end{proof}
\begin{cor}
\label{crux}
Let $M$ be a $d$-dimensional compact acyclic manifold.  Then $k[\pi_1(M)]$ is a GDGA or a superpotential algebra only if it contains a nontrivial central unit.
\end{cor}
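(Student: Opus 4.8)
The plan is to derive this as a formal consequence of the results already in hand, chiefly Theorems \ref{thm1} and \ref{biggun}. Since a superpotential algebra is by definition a (special kind of) GDGA, it suffices to treat the GDGA case. So suppose $A:=k[\pi_1(M)]$ is quasi-isomorphic to some $\mathfrak{D}(\omega,\xi)$ with $\omega$ of cohomological degree $-n$. By Theorem \ref{thm1}, $A$ is then exact Calabi-Yau of dimension $n+2$.

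Next I would match this homological dimension with the topological one. Assume first that $M$ is orientable. By Theorem \ref{Gthm}, $A$ is Calabi-Yau of dimension $n+2$, so there is a nondegenerate class in $\HH_{n+2}(A)$; but by Proposition \ref{CYthm} every nondegenerate class in $\HH_\ast(A)$ lies in degree $d$ (it is of the form $z\cdot\const_\ast([M])$ for a central unit $z$), and $\const_\ast([M])\in\HH_d(A)$ is nonzero, so $n+2=d$. Hence $A$ is exact Calabi-Yau of dimension $d$, and Theorem \ref{biggun} applies verbatim to produce a nontrivial central unit in $A$, which is the asserted conclusion.

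It remains to address the non-orientable case (still in characteristic zero). Here I would argue that $k[\pi_1(M)]$ cannot be Calabi-Yau at all: the obstruction is of the type exhibited for the Klein bottle in Example \ref{nono}, where the bimodule $\Ext^{d}_{A^e}(A,A\otimes A)$ fails to be the diagonal bimodule because no orientation class for $M$ is available over $k$. By Theorem \ref{Gthm} such an $A$ is then not a GDGA, and a fortiori not a superpotential algebra, so the implication in the corollary holds vacuously. For the applications of this paper it is in any case enough to read the corollary with $M$ orientable.

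Because the whole argument is a chain of implications through theorems already established, there is no genuine obstacle. The only point that needs a moment's care is the dimension-matching in the second paragraph — ensuring that the Calabi-Yau dimension $n+2$ coming from the GDGA presentation equals the manifold dimension $d$, so that Theorem \ref{biggun} can be quoted with the correct $d$ — together with making the non-orientable clause precise; the latter is handled either by the Example \ref{nono}-style obstruction or simply by restricting to orientable $M$.
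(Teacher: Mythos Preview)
Your argument is correct and is essentially the paper's own proof, which reads in its entirety ``Combine Theorems \ref{biggun} and \ref{thm1}.'' You have simply unpacked this: Theorem \ref{thm1} gives that a GDGA is exact Calabi--Yau, and Theorem \ref{biggun} gives that exact Calabi--Yau forces a nontrivial central unit.

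The dimension-matching paragraph and the non-orientable discussion are your own additions; the paper does not bother with either. The dimension match is harmless extra care (the Calabi--Yau dimension of a nonzero homologically finite algebra is unique, so $n+2=d$ is automatic), and the paper appears simply to be carrying the standing orientability hypothesis from Theorem \ref{biggun} without restating it in the corollary. Your treatment of the non-orientable case via an Example \ref{nono}-style obstruction is plausible but is not something the paper proves in that generality, so if you keep it you should flag it as a remark rather than part of the proof.
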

\begin{proof}
Combine Theorems \ref{biggun} and \ref{thm1}.
\end{proof}
\begin{rem}

For a general compact orientable manifold $M$ one expects (from consideration of Koszul duality) that the $A_{\infty}$-algebra $\Ch_{*}(\Omega M)$ is homologically finite and Calabi-Yau.  In the case $M=\Ss^n$ this algebra is in fact a superpotential algebra (see Example \ref{spheres}).  This demonstrates the failure of the above Theorem in this case, since $\pi_1(\Ss^n)$ has no nontrivial units.  This failure can be traced back to the failure of Proposition \ref{hequivalence} in this case.  Nonzero cycles in the image of the map $\Ho_{d-1}^{\Ss^1}(LM)\rightarrow \Ho_d(LM)$ can be identified with morphisms of nonzero degree from the total spaces of circle bundles over $(d-1)$-dimensional bases to $M$.  In the case $\Ss^{2k+1}$ these exist since these spheres actually \textit{are} circle bundles (e.g. the Hopf fibration).  In the case of even-dimensional spheres it is still straightforward to find the required maps from circle bundles: the $n$-dimensional torus is a trivial circle bundle obtained by taking the (filled) hypercube $[0,1]^n$ and identifying some points on the boundary, while the $n$-dimensional sphere is obtained by identifying \textit{all} points on the boundary of the same hypercube.
\end{rem}

\begin{conj}
\label{dodg}
Let $M$ be a $d$-dimensional compact orientable acyclic manifold.  Then the classes in $\HH_d (k[\pi_1(M)])$ corresponding to exact Calabi-Yau structures are given precisely by central units with zero constant coefficient.
\end{conj}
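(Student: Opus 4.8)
The plan is to combine the structural description of Calabi--Yau structures from Proposition~\ref{CYthm} with a conjugacy-class-by-conjugacy-class analysis of the Connes map $B$, the identity class being already handled inside the proof of Theorem~\ref{biggun}. By Proposition~\ref{CYthm} every nondegenerate class in $\HH_d(A)$, $A=k[\pi_1(M)]$, has the form $z\cdot\const_*([M])$ for a central unit $z=\sum_c a_c s_c$, and its \emph{constant coefficient} is the coefficient $a_{\{e\}}$ of the conjugacy class of the identity. Both $\HH_*(A)\cong\Ho_*(LM)$ and $\HC_*(A)\cong\Ho^{\Ss^1}_*(LM)$ split along conjugacy classes compatibly with $B$ (as recalled just before Theorem~\ref{biggun}), so $z\cdot\const_*([M])=\sum_c a_c\bigl(\const_*([M])\cdot s_c\bigr)$ lies in the image of $B$ if and only if each occurring component $\const_*([M])\cdot s_c\in\Ho_d((LM)_c)$ does. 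Thus the conjecture reduces to: (i) the image of $B\colon\Ho^{\Ss^1}_{d-1}((LM)_{\{e\}})\to\Ho_d((LM)_{\{e\}})$ is zero; and (ii) for every non-trivial (necessarily finite) conjugacy class $c$, the class $\const_*([M])\cdot s_c$, which generates $\Ho_d((LM)_c)\cong k$, lies in the image of $B$.

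Part (i) is precisely the computation carried out in the proof of Theorem~\ref{biggun}: by Proposition~\ref{hequivalence}, $\const\colon M\to(LM)_{\{e\}}$ is an $\Ss^1$-equivariant homotopy equivalence for the trivial action on $M$, so on this component $B$ factors through $\Ho_{d-1}(M\times\Cp\mathbb{P}^{\infty})\to\Ho_d(M)$ and its image consists of classes supported in dimension $<d$, hence is $0$; since $\const_*([M])\cdot s_{\{e\}}=\const_*([M])$ generates $\Ho_d((LM)_{\{e\}})$, this forces $a_{\{e\}}=0$, giving the necessity of a zero constant coefficient.

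For part (ii), fix $g\in c$; finiteness of $c$ means the centraliser $C:=C_{\pi_1(M)}(g)$ has finite index $|c|$, so (as in Remark~\ref{coverings}) the associated cover $\widetilde M\to M$ is again a compact acyclic orientable $d$-manifold and $g$ is a central element of infinite order in $\pi_1(\widetilde M)=C$. Each path component of the free loop space of an aspherical space is a classifying space of the relevant centraliser, so $(LM)_c\simeq BC\simeq\widetilde M$, and under this identification $\bp$ becomes the covering map and $\const_*([M])\cdot s_c$ becomes, up to a nonzero scalar, the fundamental class $[\widetilde M]$ --- consistently with $\bp_*(\const_*([M])\cdot s_c)=|c|[M]$. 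The central extension $1\to\langle g\rangle\to C\to C/\langle g\rangle\to1$ produces a circle bundle $\Ss^1\to\widetilde M\to B(C/\langle g\rangle)$, and the key point is that on the $c$-component the Connes long exact sequence coincides with the Gysin sequence of this bundle, so $B$ there becomes the transgression $\tau\colon\Ho_{d-1}(B(C/\langle g\rangle))\to\Ho_d(\widetilde M)$. By a classical structure result on Poincar\'e-duality groups (Bieri), $C/\langle g\rangle$ is a Poincar\'e-duality group of dimension $d-1$, so $\Ho_d(B(C/\langle g\rangle);k)=0$; exactness of the Gysin sequence then makes $\tau$ surject onto $\Ho_d(\widetilde M)=k\cdot[\widetilde M]$, which proves (ii). Combining (i) and (ii) shows that a nondegenerate class $z\cdot\const_*([M])$ lies in the image of $B$ exactly when $a_{\{e\}}=0$, which is the conjecture.

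The main obstacle is the identification used in (ii): matching the Connes sequence on an infinite-order component $(LM)_{[g]}$ with the Gysin sequence of the circle bundle coming from the central extension, equivalently pinning down the $\Ss^1$-action on $(L\widetilde M)_{[g]}\simeq\widetilde M$ up to equivariant homotopy. One route is to quote the Burghelea-type computation of the cyclic homology of group rings; a more self-contained route, in keeping with the rest of the paper, is to establish it at chain level using the loop-space chain model of \cite{Jones87} and the bimodule resolution of Theorem~\ref{hfinite}, tracking an equivariant section $\widetilde M\to(L\widetilde M)_{[g]}$ of $\widetilde{\bp}$ together with its $\Ss^1$-orbit. A secondary technical point is the compatibility of everything with the transfer to the finite cover $\widetilde M$ --- that the covering $(LM)_c\to M$ is induced by $\widetilde M\to M$ compatibly with the $\Ss^1$-actions and with the conjugacy-class decompositions of Hochschild and cyclic homology.
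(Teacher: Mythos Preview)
The statement you are addressing is a \emph{conjecture} in the paper, not a proved theorem. The paper does not give a proof; in the paragraph immediately following Conjecture~\ref{dodg} it only reduces the question to a geometric claim---that for every nontrivial central $z\in\pi_1(M)$ one can find a circle bundle over a $(d-1)$-\emph{manifold} $N$ with total space $E$ and a map $E\to M$ of nonzero degree sending the fibre to $z$---and explicitly leaves that claim open as ``interesting in its own right''.

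Your proposal takes a genuinely different, and sharper, route. Part~(i) coincides with the argument inside Theorem~\ref{biggun}. For part~(ii), instead of looking for a manifold base, you use Burghelea's computation of $\HC_*(k[G])$ to identify the Connes sequence on the $[g]$-component with the Gysin sequence of $\Ss^1\to BC\to B(C/\langle g\rangle)$, and then invoke Bieri's theorem on extensions of Poincar\'e-duality groups (if $N\rightarrowtail G\twoheadrightarrow Q$ with $N$ of type FP, then $G$ is PD$^n$ iff $N$ is PD$^s$ and $Q$ is PD$^{n-s}$) to conclude that $C/\langle g\rangle$ is PD$^{d-1}$ over $k$, hence $\Ho_d(B(C/\langle g\rangle);k)=0$, forcing the transgression to surject onto $\Ho_d(\widetilde M)$. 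This bypasses the paper's geometric claim entirely: the base need not be a manifold, only a PD group, and Bieri supplies that for free. What you flag as the ``main obstacle''---matching the Connes and Gysin sequences on an infinite-order component---is exactly Burghelea's 1985 theorem (see also Loday, \S7.4), so it is a citation rather than a gap. With Burghelea and Bieri quoted, your argument is essentially a complete proof of the conjecture, which is strictly more than the paper achieves.
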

To prove the conjecture, it is enough to establish the following geometric claim: Given a compact orientable $d$-manifold $M$, and $z$ a nontrivial central element of its fundamental group, there is a circle bundle over $N$, a $(d-1)$-dimensional manifold, with total space $E$, and a map $E\rightarrow M$ with nonzero degree, mapping the fibre $\Ss^1$ to $z$.  One reduces the problem to that of settling this claim by considering covering spaces (see Remark \ref{coverings}).  The question of whether this claim is true seems to be interesting in its own right.
\subsection{Counterexamples from hyperbolic geometry}
We have reduced the task of finding counterexamples to Conjecture \ref{mfldconj} (and also, barring friendliness issues the conjecture mentioned in Remark \ref{con2}) down to some group theory, i.e. we need to find a group $G$ such that $BG$ is homotopic to a compact orientable manifold and $k[G]$ has no nontrivial central units.  Unfortunately, as this stands, it is still difficult to classify counterexamples, as this particular area of group theory appears to be a morass of very difficult open problems.  So in order to make progress we restrict to a class of groups for which a little more is known.  In this section we parachute in some hyperbolic geometry.  So first, the definition:
\begin{defn}
A hyperbolic manifold is a compact Riemannian manifold with constant negative sectional curvature -1.
\end{defn}
We restrict attention to \textit{orientable} hyperbolic manifolds.  These are obtained from quotienting hyperbolic space by orientation-preserving isometries.  The universal cover of such an object is contractible, since it is just hyperbolic space.  It follows by Proposition \ref{CYthm} that if $M$ is a hyperbolic manifold, $k[\pi_1(M)]$ is Calabi-Yau of dimension $\dim(M)$.  The fundamental group of a hyperbolic manifold is, in some sense, the archetypal example of a \textit{hyperbolic group}.  Everything needed for this paper regarding hyperbolic groups can be found in \cite{Howienotes}.  The following lemma is what enables us to find counterexamples.
\begin{lem}
Let $g\in G$ be an element of a hyperbolic group.  Then the centralizer of $g$ contains $\langle g\rangle$ as a finite index subgroup.
\end{lem}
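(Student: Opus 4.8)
The plan is to exploit the thin-triangle / $\delta$-hyperbolicity of the Cayley graph of $G$, together with the standard fact that in a hyperbolic group the translation length of an infinite-order element is \emph{stably positive}: there is a constant $c>0$ (depending on $g$ and the generating set) such that $|g^n| \geq cn$ for all $n$, and more precisely the cyclic subgroup $\langle g\rangle$ is quasi-isometrically embedded, so that $\langle g\rangle$ is a \emph{quasiconvex} subgroup. (If $g$ has finite order, the statement is vacuous since then $g$ lies in a finite normal-ish subgroup and in fact a torsion-free hyperbolic group has no such $g$; in the application $M$ is an acyclic manifold, so $\pi_1(M)$ is torsion-free and $g$ has infinite order.) First I would record that for infinite-order $g$ the orbit map $n\mapsto g^n$ is a quasi-isometric embedding of $\mathbb{Z}$ into $G$; this is where hyperbolicity is used, via the fact that a path fellow-travelling a geodesic and doubling back would force a thin bigon of unbounded width.

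Next I would bring in the centralizer $C_G(g)$. The key point is that any $h\in C_G(g)$ conjugates the axis-like quasi-geodesic $\{g^n\}$ to itself (translated): the bi-infinite quasi-geodesic through $1$ and the powers of $g$ has well-defined endpoints $g^{\pm\infty}$ on the Gromov boundary $\partial G$, and since $h$ commutes with $g$ it fixes the pair $\{g^{+\infty},g^{-\infty}\}$. I would then invoke the standard structure result that the stabilizer of a pair of boundary points of a hyperbolic group is \emph{virtually cyclic} (more precisely, the maximal elementary subgroup $E(g)$ containing $g$ is virtually cyclic, and $C_G(g)\subseteq E(g)$). From virtual cyclicity of $C_G(g)$ and the infinite order of $g$ it follows that $\langle g\rangle$ is infinite, hence of finite index in the virtually cyclic group $C_G(g)$, which is exactly the claim.

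The cleanest route, and the one I would actually write, packages these two steps together: show that $C_G(g)$ acts on the quasi-line $Q=\{g^n : n\in\mathbb{Z}\}$ (up to bounded Hausdorff error) by translations, giving a homomorphism $C_G(g)\to \mathbb{R}$ whose image is discrete (again by the quasi-isometric embedding) hence infinite cyclic or trivial, and whose kernel consists of elements moving $Q$ a bounded amount; the kernel is therefore finite because it stabilizes a bounded neighbourhood of a basepoint and $G$ acts properly on itself. An extension of a finite group by $\mathbb{Z}$ is virtually $\mathbb{Z}$, and the copy of $\mathbb{Z}$ can be taken to be commensurable with $\langle g\rangle$; a short argument (raising $g$ to a suitable power to land in the image generator's preimage) upgrades this to the statement that $\langle g\rangle$ itself has finite index.

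I expect the main obstacle to be the \textbf{first step}: establishing rigorously that $n\mapsto g^n$ is a quasi-isometric embedding, i.e.\ that $g$ has positive stable translation length. This is the one place genuine hyperbolic geometry (thin triangles, or the local-to-global property of quasi-geodesics) is needed, and it is the assertion that fails for, say, $\mathbb{Z}^2$, where centralizers are \emph{not} virtually cyclic. Once that embedding is in hand, everything else is soft: boundary dynamics of hyperbolic groups, discreteness of the resulting translation homomorphism, and the elementary group theory of extensions $1\to F\to C_G(g)\to \mathbb{Z}\to 1$. I would cite \cite{Howienotes} for the facts about hyperbolic groups (quasiconvexity of cyclic subgroups, virtual cyclicity of boundary-pair stabilizers) rather than reprove them.
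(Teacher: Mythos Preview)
The paper does not actually prove this lemma: it is stated as a standard fact about hyperbolic groups, with the blanket reference ``Everything needed for this paper regarding hyperbolic groups can be found in \cite{Howienotes}'' given a few lines earlier. So there is nothing to compare at the level of argument; your proposal supplies a proof where the paper simply cites one.

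Your outline is correct and is the standard route to this result: positive stable translation length of infinite-order elements gives a quasi-isometric embedding of $\langle g\rangle$, the centralizer fixes the endpoint pair $\{g^{\pm\infty}\}\subset\partial G$, and stabilizers of such pairs are virtually cyclic, whence $\langle g\rangle$ has finite index in $C_G(g)$. You are also right to flag the finite-order case: as literally stated (for arbitrary $g\in G$), the lemma is false---take $g=1$ in any infinite hyperbolic group---and the intended hypothesis is that $g$ has infinite order. In the paper's application this is harmless, since $\pi_1(M)$ for $M$ acyclic is torsion-free, and indeed the proof of Corollary~\ref{hypform} only invokes the lemma for nontrivial $g$ in such a group. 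Your parenthetical remark handles this cleanly; I would just sharpen ``the statement is vacuous'' to ``the statement is false in general but irrelevant here, since $\pi_1(M)$ is torsion-free.''
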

\begin{cor}
\label{hypform}
Let $M$ be a hyperbolic manifold of dimension greater than 1.  Then
\begin{enumerate}
\item
The centre of $\pi_1(M)$ is trivial.
\item
The centre of $k[\pi_1(M)]$ is trivial.
\end{enumerate}
\end{cor}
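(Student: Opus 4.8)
The plan is to deduce both parts from the single statement that $\pi_1(M)$ contains no element $g\neq 1$ whose centraliser $C_{\pi_1(M)}(g)$ is of finite index. Given this, part (1) is immediate, since a nontrivial central element of $\pi_1(M)$ has all of $\pi_1(M)$ as its centraliser. For part (2) I would run a support argument: if $z=\sum_g a_g g\in k[\pi_1(M)]$ is central, then invariance of the coefficient function under conjugation shows that the (finite) support of $z$ is a union of conjugacy classes, each of which is then necessarily finite; a nontrivial element lying in a finite conjugacy class has a finite-index centraliser, which is excluded, so the support is contained in $\{1\}$ and $z\in k\cdot 1$.

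To establish the group-theoretic statement above I would argue by contradiction using the preceding lemma. Suppose $g\neq 1$ and $C_{\pi_1(M)}(g)$ has finite index in $\pi_1(M)$; the lemma gives that $\langle g\rangle$ has finite index in $C_{\pi_1(M)}(g)$, hence in $G:=\pi_1(M)$. Since $M$ is acyclic, $G$ has a finite-dimensional classifying space and is therefore torsion-free; a torsion-free, virtually infinite cyclic group is infinite cyclic, so $G\cong\mathbb{Z}$ and $M\simeq BG\simeq\Ss^1$. But then, by Proposition \ref{loops}, $\HH_d(k[G])\cong\Ho_d(L\Ss^1)=0$ for $d=\dim(M)\geq 2$, contradicting the fact (part of Proposition \ref{CYthm}) that $\const_*([M])$ is a nondegenerate, hence nonzero, class in $\HH_d(k[\pi_1(M)])$. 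Equivalently, one can observe that $G$ is a non-elementary word-hyperbolic group, its Gromov boundary being $\Ss^{d-1}$, which has more than two points once $d\geq 2$; see \cite{Howienotes}.

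The step I expect to require the most care is precisely this incompatibility --- spelling out cleanly why the fundamental group of a closed hyperbolic $d$-manifold with $d\geq 2$ cannot be virtually cyclic --- together with the standard background facts it leans on (closed aspherical manifold groups are torsion-free; a torsion-free virtually cyclic group is $\mathbb{Z}$; and the dimension of $\Ho_*(M)$, or equivalently the cohomological dimension of $G$, obstructs the homotopy equivalence $M\simeq\Ss^1$). Everything else, namely the two reductions in the first paragraph and the bookkeeping with finite conjugacy classes, is routine.
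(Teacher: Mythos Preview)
Your argument is correct and follows essentially the same route as the paper: reduce to showing every nontrivial conjugacy class is infinite, use the orbit--stabiliser observation to turn a finite conjugacy class into a finite-index centraliser, apply the lemma to get $\langle g\rangle$ of finite index, and then derive a dimension contradiction. The only cosmetic difference is in the endgame: the paper passes to the finite cover $M'\to M$ corresponding to $\langle g\rangle$ and argues that $M'$, being a compact orientable manifold with the cohomology of $\Ss^1$, must \emph{be} $\Ss^1$; you instead invoke the (equivalent) fact that a torsion-free virtually cyclic group is $\mathbb{Z}$ and contradict via $\HH_d$ or the Gromov boundary.
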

\begin{proof}
It is obviously enough to prove the second assertion.  From our previous description of central elements of $k[\pi_1(M)]$, it is necessary and sufficient to prove that all conjugacy classes in $\pi_1(M)$ are infinite. 
\smallbreak
For a contradiction, assume that $c$ is a conjugacy class with finitely many elements.  Then we obtain a group homomorphism $\pi_1(M)\rightarrow S_{|c|}$, the permutation group on $c$, via the conjugation action.  The kernel of this homomorphism is of finite index, and is contained in the centralizer of $g$, an arbitrary element of $c$.  It follows that the centraliser of $g$ is a finite index subgroup of $\pi_1(M)$, and so by the lemma, $\pi_1(M)$ contains $\langle g\rangle$ as a finite index subgroup.  Let
\[
\xymatrix{
M'\ar[d]\\
M
}
\]
be the cover corresponding to the subgroup $\langle g\rangle$.  It follows that it is a finite cover, and so $M'$ is compact.  Since $M'$ is a $BG$ for $\mathbb{Z}$, it follows that there is a homotopy equivalence $M'\rightarrow \Ss^1$.  We deduce that $M'$ is a compact orientable manifold with the same cohomology as the circle, and so it is the circle, a contradiction of our assumption on the dimension of the manifold $M$.
\end{proof}
\begin{cor}
Let $M$ be a compact hyperbolic manifold of dimension greater than 1.  Then $k[\pi_1(M)]$ is a Calabi-Yau algebra that is not a GDGA, and in particular not a superpotential algebra.
\end{cor}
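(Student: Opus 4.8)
The plan is to deduce the statement by combining three results already in hand: the positive result that fundamental group algebras of acyclic manifolds are Calabi-Yau (Proposition \ref{CYthm}), the obstruction that a GDGA presentation forces the existence of a nontrivial central unit (Corollary \ref{crux}), and the computation that the centre of $k[\pi_1(M)]$ is trivial when $M$ is hyperbolic of dimension $>1$ (Corollary \ref{hypform}).

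First I would record that a compact hyperbolic manifold $M$ is acyclic in the sense of the earlier definition: its universal cover is hyperbolic space, which is diffeomorphic to $\mathbb{R}^{\dim(M)}$ and hence contractible. Thus Proposition \ref{CYthm} applies directly and shows that $k[\pi_1(M)]$ is Calabi-Yau of dimension $\dim(M)$, with the canonical Calabi-Yau structure $\const_*([M])$. This settles the first clause of the statement, and indeed it is the content already noted in the text preceding the definition of a hyperbolic manifold.

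For the negative clause I would argue by contradiction from Corollary \ref{crux}: if $k[\pi_1(M)]$ were a GDGA, or a fortiori a superpotential algebra, it would have to contain a nontrivial central unit. But by Corollary \ref{hypform}(2) the centre of $k[\pi_1(M)]$ is trivial, i.e.\ equal to the image of the ground field $k$, since $\dim(M)>1$. Hence every central unit is a nonzero scalar $\lambda\in k\subset k[\pi_1(M)]$, and such a scalar is exactly the sort of ``trivial'' unit excluded in Theorem \ref{biggun}: scaling $\const_*([M])$ by $\lambda$ produces another Calabi-Yau structure that is still not in the image of the Connes differential $B$ (because $B$ is $k$-linear, so $\const_*([M])=\lambda^{-1}\cdot(\lambda\,\const_*([M]))$ would itself be exact, contradicting the computation in the proof of Theorem \ref{biggun}). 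Therefore no nontrivial central unit exists, and Corollary \ref{crux} gives that $k[\pi_1(M)]$ is neither a GDGA nor a superpotential algebra.

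There is essentially no obstacle remaining in this final corollary itself; all of the genuine work has been carried out upstream. The two places to exercise care are (i) confirming that hyperbolicity implies acyclicity so that Proposition \ref{CYthm} is applicable, which is immediate, and (ii) matching the word ``nontrivial'' in Corollary \ref{crux} and Theorem \ref{biggun} with ``not a nonzero scalar,'' which is already how those statements are used. The substantive input — that all conjugacy classes in $\pi_1(M)$ are infinite, via the centralizer lemma for hyperbolic groups and the covering-space reduction to the classification of aspherical manifolds with the homology of a circle — is supplied by Corollary \ref{hypform}, so the proof of the present corollary is purely a matter of assembling these pieces.
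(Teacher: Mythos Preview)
Your proposal is correct and follows exactly the paper's approach: the paper's proof is the single line ``We apply Corollaries \ref{hypform} and \ref{crux},'' and you have simply unpacked that citation (together with Proposition \ref{CYthm} for the Calabi-Yau clause, which the paper already noted in the paragraph introducing hyperbolic manifolds).
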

\begin{proof}
We apply Corollaries \ref{hypform} and \ref{crux}.
\end{proof}
\section{Future directions}
\subsection{Positive results}
The main result of this paper appears to be negative, i.e. it establishes that one cannot give superpotential descriptions of some fundamental group algebras.  We try here to undo a little of the damage, by giving a weaker conjecture and some consideration of it in low dimensions.
\begin{conj}
If $M$ is an acyclic compact orientable manifold, then $k[\pi(M)]$ is a superpotential algebra if it is a circle bundle.
\end{conj}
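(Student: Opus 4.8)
The plan is to prove the conjecture by a \emph{suspension} construction: from the Calabi-Yau structure on the fundamental group algebra of the base of the bundle, one builds the GDGA data of a superpotential algebra one dimension higher. First I would reduce to the case of a \emph{trivial} bundle; one checks that for a circle bundle with acyclic total space the base $N$ is itself a compact acyclic orientable $(d-1)$-manifold, so that $\pi_1(M)=\pi_1(N)\times\mathbb{Z}$ and $k[\pi_1(M)]\cong k[\pi_1(N)]\otimes k[t^{\pm1}]$. By Proposition \ref{CYthm} the algebra $k[\pi_1(N)]$ is Calabi-Yau of dimension $d-1$ with distinguished nondegenerate class $\const_*([N])$, and $k[t^{\pm1}]=k[\pi_1(\Ss^1)]$ is smooth (whereas $k[\pi_1(N)]$ itself generally is not --- it has Hochschild dimension $d-1$). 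The goal is to promote these data to a triple $(A,\omega,W)$ with $A$ semismooth together with a quasi-isomorphism $\mathfrak{D}(\omega,\{W,-\})\xrightarrow{\sim}k[\pi_1(M)]$; Theorem \ref{Gthm} then gives that $k[\pi_1(M)]$ is Calabi-Yau of dimension $d$ (recovering Proposition \ref{CYthm}) and Theorem \ref{thm1} that it is exact Calabi-Yau, with exact structure the class of $(Dt,0)$.

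In dimension $3$ the trivial circle bundles are $M=\Sigma_g\times\Ss^1$ with $\Sigma_g$ a closed orientable surface of genus $g\geq1$ (these are exactly the acyclic ones). For $g=1$ this is Example \ref{3-torus}: take $A=k\langle x^{\pm1},y^{\pm1},z^{\pm1},x^*[1],y^*[1],z^*[1]\rangle$, a localized free quiver algebra, hence smooth and connected; $\omega=\tfrac12\bigl(DxDx^*+DyDy^*+DzDz^*\bigr)$, bisymplectic of cohomological degree $-1$; and $W=xyz-xzy\in A_{\cyc}$. Its cyclic derivatives $\partial_xW=yz-zy$, $\partial_yW=zx-xz$, $\partial_zW=xy-yx$ cut out the relations presenting $k[\pi_1(M)]=k[x^{\pm1},y^{\pm1},z^{\pm1}]$, and $dt=\ncml(\omega)$ (concretely $[x,x^*]+[y,y^*]+[z,z^*]$ up to sign); a Koszul-type computation then shows $\mathfrak{D}(\omega,\{W,-\})$ has homology only in degree $0$, where it equals $k[\pi_1(M)]$. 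For $g\geq2$ the base $k[\pi_1(\Sigma_g)]$ is Calabi-Yau of dimension $2$ but, being the group algebra of a closed hyperbolic surface, is \emph{not} a superpotential algebra (by Corollaries \ref{hypform} and \ref{crux}); so here the suspension genuinely creates a superpotential presentation that does not exist one dimension down. The expected data is $A$ the localization of the free algebra on $a_1,b_1,\dots,a_g,b_g,t$ together with dual generators in cohomological degree $-1$, a standard bisymplectic $\omega$ of degree $-1$, and $W\in A_{\cyc}$ whose cyclic derivatives are the $2g$ relations $[a_i,t],[b_i,t]$ together with $\prod_i[a_i,b_i]-1$; exhibiting such a $W$ in closed form and verifying the quasi-isomorphism is the remaining work in dimension $3$.

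For the conjecture in general two further points must be handled. (1) In every dimension one replaces $k[\pi_1(N)]$ by a semismooth model $\tilde{B}$ (available since $\Gamma(S)$ from Theorem \ref{hfinite} is a finite bimodule resolution), adjoins a central variable $t$ in degree $c-1$ with $dt$ obtained by contracting the Calabi-Yau pairing of $N$ --- the analogue of $\ncml(\omega)$ --- and then verifies that the resulting differential has the form $\{W,-\}$ for a genuine superpotential $W\in A_{\cyc}$, not merely a symplectic (or homologically symplectic) derivation, and that the resulting algebra is connected and semismooth. (2) A nontrivial circle bundle makes $\pi_1(M)$ a central extension $1\to\mathbb{Z}\to\pi_1(M)\to\pi_1(N)\to1$, so that $k[\pi_1(M)]$ is a twisted rather than an honest tensor product; one needs a twisted version of the construction in which the Euler class of the bundle is absorbed into $W$.

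I expect (1) to be the genuine obstacle: it is exactly the ``upgrade a Calabi-Yau structure to a superpotential'' problem that the negative results of this paper show to be delicate. What should make it tractable here is the extra $\Ss^1$-direction, which is also what supplies the required nondegenerate exact class --- a nonzero element in the image of $\Ho^{\Ss^1}_{d-1}(LM)\to\Ho_d(LM)$, here coming from the fibre, as in the Remark following Corollary \ref{crux}. Consistently with Conjecture \ref{dodg}, the resulting exact Calabi-Yau class should be $\const_*([M])$ multiplied by the central fibre unit $t$, a central unit with zero constant coefficient. Proving the conjecture in full therefore comes down to carrying out this twisted, semismooth suspension and producing the superpotential $W$ in closed form.
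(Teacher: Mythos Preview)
This statement is a \emph{conjecture} in the paper; it is not proved in general. The paper only establishes the special case of \emph{trivial} circle bundles in dimension $3$, i.e.\ $M=\Sigma_g\times\Ss^1$ with $g\geq 1$, so your proposal should be measured against that partial result.

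For $g=1$ your treatment coincides with the paper's (Example~\ref{3-torus}). For $g\geq 2$ the approaches diverge. You propose the minimal smooth algebra on $2g+1$ degree-zero generators $a_1,b_1,\dots,a_g,b_g,t$ (plus duals) and a $W\in A_{\cyc}$ whose cyclic derivatives are the $2g+1$ relations $[a_i,t],[b_i,t],\prod_i[a_i,b_i]-1$, conceding that exhibiting $W$ is ``the remaining work''. It is not clear such a $W$ exists on those generators: the inhomogeneous term $-1$ and the inverses make the cyclic-derivative calculus awkward, and there is no obvious candidate. The paper sidesteps this with an entirely different, combinatorial construction: build a \emph{consistent brane tiling} on $\Sigma_g$ (an explicit tile is given for every $g\geq 2$), read off a quiver with potential from the dimer model, contract a maximal tree, then invert the remaining arrows. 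The resulting presentation uses \emph{more} generators than your minimal count --- for $g=2$ the paper's Example~\ref{gen2} has nine arrows and $W_2=fi-gia+adg-bdh+behc-cef$ --- and consistency of the tiling (in the sense of \cite{Dav08}) is what certifies that the Ginzburg algebra is quasi-isomorphic to its Jacobi algebra, hence to $k[\pi_1(\Sigma_g\times\Ss^1)]$. So even in the one case the paper settles, the mechanism is not the direct ``suspension'' you sketch.

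Your broader programme and your identification of the two obstacles (upgrading a symplectic derivation to an honest $W$; absorbing the Euler class for nontrivial bundles) are reasonable, but they remain a programme. One small correction: your opening line asserts $\pi_1(M)=\pi_1(N)\times\mathbb{Z}$ for any acyclic circle bundle, which is false (the Heisenberg nilmanifold is a counterexample); you correct this yourself in point~(2), so the ``reduction to trivial bundles'' at the start is not a reduction but simply the first case. In short: neither you nor the paper proves the conjecture; the paper proves strictly more (the full $3$-dimensional trivial-bundle case, constructively) by a method --- brane tilings --- that your outline does not anticipate.
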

In 2 dimensions there is only one non-hyperbolic 2-dimensional acyclic compact orientable manifold.  It is easy to find a superpotential description for this manifold.\smallbreak
In 3 dimensions it is a little harder.  We establish the positive result for trivial circle bundles $M=N\times \Ss^1$.  Acyclicity forces $N$ to be a surface of genus at least 1.  In the case $M=(\Ss^1)^3$ this is just Example \ref{3-torus}.  In higher genus we use the language of \textit{brane tilings}.  These are covered in the higher genus case in \cite{Dav08}, see also \cite{MR}, and \cite{longout}.  We won't recall the definitions here.  Given a consistent (in the sense of \cite{Dav08}) brane tiling we obtain a pair $(\Cp Q,W)$ of a free quiver algebra and a $W\in (\Cp Q)_{\cyc}$, a linear combination of cyclic words in the quiver.  We obtain a new quiver by contracting a maximal tree, and removing any instances of contracted edges from words in $W$.  This gives us a new superpotential algebra.  Finally, we obtain the superpotential algebra we want by inverting all the arrows of the modified quiver.  So it is enough to find a consistent brane tiling for a surface of genus $g\geq 2$.  This we do as follows.  Take two copies of the tile in Figure \ref{genustile}, and glue them along the thick edges, obtaining a 2-manifold with boundary given by a number of circles.  Taking the mirror image of the tile in Figure \ref{genustile} we obtain a new tile, and taking two copies of this tile, glued again along the thick edges, we obtain another 2-manifold with boundary.  Gluing these two 2-manifolds together, we obtain a surface, and a tiling of it, that one can easily check is consistent.
\begin{figure}

\centering
\input{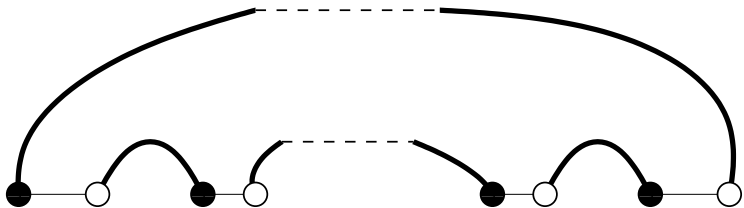}
\caption{One of four tiles with which we tile a higher genus surface}
\label{genustile}
\end{figure}

\begin{examp}
\label{gen2}
Following this procedure in genus 2, we obtain the pair $(Q_2,W_2)$.  where $Q_2$ is a quiver with 1 vertex, and arrows labelled $a,...,i$, and 
\[
W_2=fi-gia+adg-bdh+behc-cef.
\]
It is easy to see that appropriate modifications of the quiver and of $W_2$ enable us to remove quadratic terms.  However, in this example it is more useful to leave them in (see next subsection).
\end{examp}

\subsection{Donaldson-Thomas invariants of 3-manifolds}
One of the principal motivations for undertaking the study of the question of when a manifold admits a superpotential description was a desire to understand, and compute, motivic Donaldson-Thomas invariants for them (as defined in \cite{KS}).  See also the final section of \cite{COHA} for a brief discussion of this problem in (the only!) easy cases.  Roughly speaking, the motivic Donaldson-Thomas invariant of a moduli space is a motivic refinement of the topological Euler characteristic, weighted by Behrend's microlocal function (see \cite{Behr09}, \cite{ObstructionsHilbert}).  There are two distinct moduli problems one might attempt to associate these motivic invariants to: the problem of parameterising perfect modules over the $A_{\infty}$-enriched cohomology algebra of a manifold $M$, and the problem of parameterising finite-dimensional representations of $\pi_1(M)$.
\smallbreak
We focus on the second problem.  The \textit{point} of finding a superpotential description, in this case, is that it gives a description of the moduli space of representations of $\pi_1(M)$ as the critical locus of a function $f$ on a smooth scheme.  The smooth scheme is the representation variety of the smooth algebra appearing in the description of $\pi_1(M)$ as a superpotential algebra, and the function is given by $trW$.
\smallbreak
Given such a description we are in much better shape for actually doing some calculations.  In the case $(\Ss^1)^3$, the answer can be deduced from basic manipulations of the answer given in \cite{BBS}, which deals with the Hilbert space of $\Cp^3$.  This relies heavily on the algebra $\Cp[x,y,z]$ having a superpotential description of a special kind: let $(B,W)$ be a pair of a smooth algebra and superpotential giving rise to our algebra $A=k[\pi(M)]$.  Then we wish to find a torus action on $B$ such that $W$ is homogeneous of weight 1.
\smallbreak
Returning to Example \ref{gen2}, one quickly shows that this is possible for the given superpotential.  However, if we modify the algebra by deleting the edge $f$, deleting the $fi$ term of $W$ and replacing $cef$ with $cei$, we obtain a pair that does \textit{not} satisfy this property, although it is a simpler superpotential description of the fundamental group algebra.  The general recipe we have given for obtaining superpotential descriptions of 3-manifolds given by trivial circle bundles over acyclic surfaces always produces a superpotential satisfying the desired property regarding torus weights.

\subsection{Some remarks on topological field theory}
Let $A=k[\pi_1(M)]$ for $M$ an acyclic compact orientable $d$-manifold.  We start this subsection by revisiting Proposition \ref{flip}.  This states that the flip isomorphism on Hochschild homology is actually the identity, since it is given, at the level of chains on $LM$, by rotating loops by $\pi$.  Now we can of course promote the question of finding fixed points of $\beta$ to a question in dg-categories, if we take a dg-model for the category of bimodules over $A$, at which point we can ask a more refined question, we may ask that $\beta$ fixes our nondegenerate class at the chain level.  If this condition is satisfied we say that $\eta$ is a \textit{chain level} Calabi-Yau structure for $A$.
\smallbreak
If we take as our chain model for Hochschild homology the free loop space, then this requirement has an obvious geometric meaning.  Note that the $\mathbb{Z}_2$-action here is really a restriction of a $\Ss^1$-action, and furthermore, by construction, any nondegenerate class $\eta\in \Zz_d(C(LM))$ in the image of the $B$-map will be automatically $\Ss^1$-invariant.  The $\Ss^1$-invariant, nondegenerate classes in $\Zz_d(\Ch_*(LM))$ play a special role in topological field theory, namely (see the final section of \cite{Lur09}) they correspond exactly to 2-dimensional extended topological field theories.  In short, these are functors from a higher bordism category, the i-morphisms of which are given by i-manifolds with boundary, for $i\leq 2$, and the higher morphisms of which are given by diffeomorphisms, isotopies between diffeomorphisms, etc.  These functors land in a higher category whose objects are algebras, whose morphisms are bimodules, and whose two-morphisms are bimodule maps, with higher structure given by the calculation of $\mathrm{RHom}$s for bimodules.
\smallbreak
One may summarise the situation as follows: for a class $\eta\in \Zz_d(\Ch_*(LM))$ we have
\[
\xymatrix{
\eta \hbox{ is a class for an exact Calabi-Yau structure on } A\ar@{=>}[d]\\
\eta \hbox{ is a cotrace for a 2-dimensional extended TFT sending } \star \hbox{ to } A\ar@{=>}[d]\\
\eta \hbox{ is a class for a chain level Calabi-Yau structure on } A\ar@{=>}[d]\\
\Ho(\eta) \hbox{ is a class for a Calabi-Yau structure on } A.
}
\]
All of these implications are strict (the first one thanks to the fact that $\const_*([M])$ is not an exact Calabi-Yau structure on $A$).

\bibliographystyle{amsplain}
\bibliography{SPP}

\providecommand{\bysame}{\leavevmode\hbox to3em{\hrulefill}\thinspace}
\providecommand{\MR}{\relax\ifhmode\unskip\space\fi MR }
\providecommand{\MRhref}[2]{%
  \href{http://www.ams.org/mathscinet-getitem?mr=#1}{#2}
}
\providecommand{\href}[2]{#2}
\begin{thebibliography}{10}

\bibitem{Behr09}
K.~Behrend, \emph{Donaldson-{T}homas type invariants via microlocal geometry},
  Ann. of Math. (2) \textbf{170} (2009), no.~3, 1307--1338.

\bibitem{BBS}
K.~Behrend, J.~Bryan, and B.~Szendr\H{o}i, \emph{Motivic degree zero
  {D}onaldson-{T}homas invariants}, \url{http://arxiv.org/abs/0909.5088}, 2009.

\bibitem{ObstructionsHilbert}
K.~Behrend and B.~Fantechi, \emph{Symmetric obstruction theories and {H}ilbert
  schemes of points on threefolds}, Algebra {N}umber {T}heory (2008), no.~3,
  313--345.

\bibitem{CY3dim}
R.~Bocklandt, \emph{Graded {C}alabi {Y}au algebras of dimension 3}, J. Pure
  Appl. Algebra \textbf{212} (2008), no.~1, 14--32. \MR{2355031 (2008h:16013)}

\bibitem{longout}
N.~Broomhead, \emph{{Dimer models and Calabi-Yau algebras}},
  \url{http://http://arxiv.org/abs/0901.4662}, 2009.

\bibitem{Cli80}
G.~Cliff, \emph{Zero divisors and idempotents in group rings}, Canad. J. Math.
  \textbf{32} (1980), no.~3, 596--602. \MR{586978 (81k:16011)}

\bibitem{CostTFT}
K.~Costello, \emph{Topological conformal field theories and {C}alabi-{Y}au
  categories}, Adv. Math. \textbf{210} (2007), no.~1, 165--214.

\bibitem{CBEG}
W.~Crawley-Boevey, P.~Etingof, and V.~Ginzburg, \emph{Noncommutative geometry
  and quiver algebras}, Adv. Math. \textbf{209} (2007), no.~1, 274--336.

\bibitem{CunQui95}
J.~Cuntz and D.~Quillen, \emph{Algebra extensions and nonsingularity}, J. Amer.
  Math. Soc. \textbf{8} (1995), no.~2, 251--289. \MR{1303029 (96c:19002)}

\bibitem{CunQui95b}
\bysame, \emph{Cyclic homology and nonsingularity}, J. Amer. Math. Soc.
  \textbf{8} (1995), no.~2, 373--442. \MR{1303030 (96e:19004)}

\bibitem{Dav08}
B.~Davison, \emph{Consistency conditions for brane tilings}, J. Alg
  \textbf{338} (2011), 1--23.

\bibitem{ToricVarieties}
W.~Fulton, \emph{Introduction to toric varieties}, Princeton University Press,
  1993.

\bibitem{Ginz01}
V.~Ginzburg, \emph{Non-commutative symplectic geometry, quiver varieties, and
  operads}, Math. Res. Lett. \textbf{8} (2001), no.~3, 377--400. \MR{1839485
  (2002m:17020)}

\bibitem{Ginz05}
\bysame, \emph{{Double derivations and Cyclic homology}}, ArXiv Mathematics
  e-prints (2005).

\bibitem{Ginz}
\bysame, \emph{{C}alabi-{Y}au algebras},
  \url{http://arxiv.org/abs/math/0612139}, 2006.

\bibitem{GoeJar99}
P.~Goerss and J.~Jardine, \emph{Simplicial homotopy theory}, Progress in
  Mathematics, vol. 174, Birkh\"auser Verlag, Basel, 1999. \MR{1711612
  (2001d:55012)}

\bibitem{Howienotes}
J.~Howie, \emph{Hyperbolic groups lecture notes}.

\bibitem{Jones87}
J.~Jones, \emph{Cyclic homology and equivariant homology}, Invent. Math.
  \textbf{87} (1987), no.~2, 403--423. \MR{870737 (88f:18016)}

\bibitem{keller-intro}
B.~Keller, \emph{Introduction to {$A$}-infinity algebras and modules}, Homology
  Homotopy Appl. \textbf{3} (2001), no.~1, 1--35.

\bibitem{CYTC}
\bysame, \emph{Calabi-{Y}au triangulated categories}, Trends in representation
  theory of algebras and related topics, Ser. Congr. Rep., EMS, Z\"urich, 2008,
  pp.~467--489.

\bibitem{kel09}
\bysame, \emph{{Deformed Calabi-Yau Completions, With an appendix by M. Van den
  Bergh}}, J. Reine. Agnew. Math. \textbf{{654}} (2011), 125--180.

\bibitem{KontRos00}
M.~Kontsevich and A.~Rosenberg, \emph{Noncommutative smooth spaces}, The
  {G}elfand {M}athematical {S}eminars, 1996--1999, Gelfand Math. Sem.,
  Birkh\"auser Boston, Boston, MA, 2000, pp.~85--108. \MR{1731635
  (2001c:14005)}

\bibitem{KS}
M.~Kontsevich and Y.~Soibelman, \emph{{Stability structures, motivic
  Donaldson-Thomas invariants and cluster transformations}},
  \url{http://arxiv.org/abs/0811.2435}, 2008.

\bibitem{COHA}
\bysame, \emph{{Cohomological Hall algebra, exponential Hodge structures and
  motivic Donaldson-Thomas invariants}}, \url{http://arxiv.org/abs/1006.2706},
  June 2010.

\bibitem{KLH}
K.~Lef\`{e}vre-Hasegawa, \emph{Sur les {$A_{\infty}$}-cat\'{e}gories}, Ph.D.
  thesis, Universit\'{e} Denis Diderot -- Paris 7, November 2003.

\bibitem{Lod98}
J-L. Loday, \emph{Cyclic homology}, second ed., Grundlehren der Mathematischen
  Wissenschaften [Fundamental Principles of Mathematical Sciences], vol. 301,
  Springer-Verlag, Berlin, 1998.

\bibitem{Lur09}
J.~Lurie, \emph{On the classification of topological field theories}, Current
  developments in mathematics, 2008, Int. Press, Somerville, MA, 2009,
  pp.~129--280. \MR{2555928 (2010k:57064)}

\bibitem{May92}
J.~P. May, \emph{Simplicial objects in algebraic topology}, Chicago Lectures in
  Mathematics, University of Chicago Press, Chicago, IL, 1992, Reprint of the
  1967 original. \MR{1206474 (93m:55025)}

\bibitem{MR}
S.~Mozgovoy and M.~Reineke, \emph{On the noncommutative {D}onaldson-{T}homas
  invariants arising from brane tilings}, Adv. Math. \textbf{223} (2010),
  no.~5, 1521--1544.

\bibitem{Pus02}
M.~Puschnigg, \emph{The {K}adison-{K}aplansky conjecture for word-hyperbolic
  groups}, Invent. Math. \textbf{149} (2002), no.~1, 153--194. \MR{1914620
  (2003i:22009)}

\bibitem{RolfZhu98}
D.~Rolfsen and J.~Zhu, \emph{Braids, orderings and zero divisors}, J. Knot
  Theory Ramifications \textbf{7} (1998), no.~6, 837--841. \MR{1643939
  (99g:20072)}

\bibitem{Seg08}
E.~Segal, \emph{The {$A_\infty$} deformation theory of a point and the derived
  categories of local {C}alabi-{Y}aus}, J. Algebra \textbf{320} (2008), no.~8,
  3232--3268.

\bibitem{VdB10}
M.~van~den Bergh, \emph{Calabi-{Y}au algebras and superpotentials},
  \url{http://arxiv.org/abs/1008.0599}, Aug 2010.

\end{thebibliography}
\end{document}